\numberwithin{equation}{section}
\theoremstyle{plain}
\newtheorem{thm}{\protect\theoremname}[section]
\newenvironment{proof}[1][\protect\proofname]{\par
\normalfont\topsep6\p@\@plus6\p@\relax
\trivlist
\itemindent\parindent
\item[\hskip\labelsep
\scshape
#1]\ignorespaces
}{%
\endtrivlist\@endpefalse
}
\providecommand{\proofname}{Proof}
\theoremstyle{plain}
\newtheorem{lem}[thm]{\protect\lemmaname}
\theoremstyle{plain}
\newtheorem{prop}[thm]{\protect\propositionname}
\theoremstyle{plain}
\newtheorem{conjecture}[thm]{\protect\conjecturename}
\theoremstyle{definition}
\newtheorem{defn}[thm]{\protect\definitionname}
\theoremstyle{remark}
\newtheorem{rem}[thm]{\protect\remarkname}
\theoremstyle{plain}
\newtheorem{cor}[thm]{\protect\corollaryname}
\theoremstyle{definition}
\newtheorem{example}[thm]{\protect\examplename}
\numberwithin{figure}{section}
\providecommand{\conjecturename}{Conjecture}
\providecommand{\corollaryname}{Corollary}
\providecommand{\definitionname}{Definition}
\providecommand{\examplename}{Example}
\providecommand{\lemmaname}{Lemma}
\providecommand{\propositionname}{Proposition}
\providecommand{\remarkname}{Remark}
\providecommand{\theoremname}{Theorem}
\begin{document}

%%%%%%%%%%%%%%%%%%%%%%%%%%%%%%%%%%%%%%%%%%%%%%%%%%%%%%%%%%%%%%%%
%%%%%%%%%%%%%%%%%%%%%%%%%%%%%%%%%%%%%%%%%%%%%%%%%%%%%%%%%%%%%%%%
%					TITLE PAGE
%%%%%%%%%%%%%%%%%%%%%%%%%%%%%%%%%%%%%%%%%%%%%%%%%%%%%%%%%%%%%%%%
%%%%%%%%%%%%%%%%%%%%%%%%%%%%%%%%%%%%%%%%%%%%%%%%%%%%%%%%%%%%%%%%

\title{A McKean--Vlasov equation with positive feedback and blow-ups}

\author[1]{Ben Hambly}
\author[2]{Sean Ledger}
\author[1]{Andreas S{\o}jmark}
\affil[1]{Mathematical Institute, University of Oxford, Oxford, OX2 6GG, UK}
\affil[2]{School of Mathematics, University of Bristol and Heilbronn Institute for Mathematical Research, Bristol, BS8 1TW, UK }
%\author{ \qquad Sean Ledger \qquad Andreas S{\o}jmark}  

\date{23\textsuperscript{rd} January 2018}

\maketitle

\begin{abstract}
We study a McKean--Vlasov equation arising from a mean-field model of a particle system with positive feedback. As particles hit a barrier they cause the other particles to jump in the direction of the barrier and this feedback mechanism leads to the possibility that the system can exhibit contagious blow-ups. Using a fixed-point argument we construct a differentiable solution up to a first explosion time. Our main contribution is a proof of uniqueness in the class of c\`{a}dl\`{a}g functions, which confirms the validity of related propagation-of-chaos results in the literature. We extend the allowed initial conditions to include densities with any power law decay at the boundary, and connect the exponent of decay with the growth exponent of the solution in small time in a  precise way. This takes us asymptotically close to the control on initial conditions required for a global solution theory. A novel minimality result and trapping technique are introduced to prove uniqueness. 
\end{abstract}

%%%%%%%%%%%%%%%%%%%%%%%%%%%%%%%%%%%%%%%%%%%%%%%%%%%%%
%%%%%%%%%%%%%%%%%%%%%%%%%%%%%%%%%%%%%%%%%%%%%%%%%%%%%
%%%%%%%%%%%%     1. INTRODUCTION       %%%%%%%%%%%%%%
%%%%%%%%%%%%%%%%%%%%%%%%%%%%%%%%%%%%%%%%%%%%%%%%%%%%%
%%%%%%%%%%%%%%%%%%%%%%%%%%%%%%%%%%%%%%%%%%%%%%%%%%%%%
\section{Introduction} 
\label{Sect_Intro}

This paper concerns a McKean--Vlasov problem, formulated probabilistically as  
\begin{align}
\label{eq:Intro_MVproblem}
\begin{cases}
 X_t = X_0 + B_t - \alpha L_t \\
\tau = \inf\{ t \geq 0 : X_t \leq 0 \}  \\
L_t = \mathbb{P}(\tau \leq t), 
\end{cases}
\end{align}
where $\alpha \in \mathbb{R}$ is a constant, $B$ is a standard Brownian motion and $X_0$ is an independent random variable distributed on the positive half-line. We denote the law of $X_0$ by $\nu_0$.
A~solution to this  problem is a deterministic and initially zero c\`adl\`ag function $t \mapsto L_t$ that is increasing and for which (\ref{eq:Intro_MVproblem}) holds for any Brownian motion $B$. Viewing~(\ref{eq:Intro_MVproblem}) as an SDE~in $X$, notice that there is no distinction to be made between strong and weak notions of solution: knowing $L$ fixes the law of $X$ and, together with any Brownian motion, $L$ fixes a pathwise construction of $X$. When $\alpha > 0$, the equations have a positive feedback effect and this is the case we consider here --- the situation for $\alpha \leq 0$ is classical and existence and uniqueness of smooth solutions is known \cite{carrillo2011, carrillo2013, carrillo2015}.

%%%%%% PHYSICAL MOTIVATION - begin %%%%%%

Our motivation for studying (\ref{eq:Intro_MVproblem}) comes from mathematical finance, where it can be used as a simple model for contagion in large financial networks or large portfolios of defaultable entities. To illustrate how  (\ref{eq:Intro_MVproblem}) may emerge
in this context, consider a large system of $N$ banks. Following the structural approach to credit risk, we say that the
$i$'th bank defaults when its asset value, $A_{t}^{i}$, hits a default
barrier, $D_{t}^{i}$. This gives rise to the notion of \emph{distance-to-default}, for which a simple model could be of the form
\[
Y_{t}^{i}:=\log(A_{t}^{i})-\log(D_{t}^{i})=X_{0}^{i}+{\textstyle \int_{0}^{t}b(s)ds}+{\textstyle \int_{0}^{t} \sigma(s)dB_{s}^{i}},\qquad i=1,\ldots,N,
\]
where $X_0^{1},\ldots,X_0^{N}$ are i.i.d.~copies of $X_0$ and $B^{1},\ldots,B^{N}$ are independent Brownian motions. Next, we can introduce an element of contagion with a model in which the default of one bank causes the other banks to lose a proportion $\alpha/N$ of their assets. For large $N$, we have
$1-\alpha/N\simeq\exp\left\{ -\alpha/N\right\}$, so the new asset values, $\hat{A}$, are then defined by
\[
\hat{A}_{t}^{i,N}:=\prod_{j=1}^{N}\exp\Bigl\{-\frac{\alpha}{N}\mathbf{1}_{t\geq\tau^{j,N}}\Bigr\} A_{t}^{i}=\exp\Bigl\{-\frac{\alpha}{N}\sum_{j=1}^{N}\mathbf{1}_{t\geq\tau^{j,N}}\Bigr\} A_{t}^{i},\qquad t<\tau^{i,N},
\]
where $\tau^{i,N}:= \{ t>0:X_{t}^{i,N}\leq0 \}$ for $X_{t}^{i,N}:=\log(\hat{A}_{t}^{i,N})-\log(D_{t}^{i})$.
After taking logarithms, it follows that the new distances-to-default, $X^{i,N}$, satisfy
\begin{equation}
\label{eq:Intro_FiniteSystem_Def}
dX_{t}^{i,N} = b(t)dt +  \sigma(t)dB_{t}^{i}-\alpha dL_{t}^{N}\quad\text{with}\quad L_{t}^{N}=\frac{1}{N}\sum_{j=1}^{N}\mathbf{1}_{t\geq\tau^{j,N}},
\end{equation}
for $i=1,\dots,N$. If we let $N\rightarrow\infty$, then the same arguments as in \cite{dirt_SPA_2015}
show that we can recover solutions to  (\ref{eq:Intro_MVproblem}) --- with the corresponding drift and volatility --- as topological limit points of the particle system (\ref{eq:Intro_FiniteSystem_Def}), and these solutions are global: they exist for all $t \geq 0$. As regards the form of (\ref{eq:Intro_FiniteSystem_Def}), we will concentrate the analysis in this paper on the simplest case (\ref{eq:Intro_MVproblem}) and then we devote the final Section \ref{Sect_GeneralCoeff} to a discussion of more general coefficients.

 The first version of the problem (\ref{eq:Intro_MVproblem}) appeared in the mathematical neuroscience literature as a mean-field limit of a large network of electrically coupled neurons 
\cite{carrillo2011, carrillo2013, dirt_annalsAP_2015, dirt_SPA_2015}. In this setting, each neuron is identified with an electrical potential (given by an SDE) and when it reaches a threshold voltage, the neuron fires an electrical signal to the other neurons, which then become excited to higher voltage levels. After reaching the threshold, the neuron is instantaneously reset to a predetermined value and it then continues to evolve according to this rule indefinitely. Therefore, the model is different to our setting as we do not reset the mean-field particle in (\ref{eq:Intro_MVproblem}), however, the essential mathematical difficulties from the positive feedback remain common to both models. 
With regard to the financial framework introduced above, we note that a similar model for default contagion (with constant drift and volatility) was recently proposed in \cite{nadtochiy_shkolnikov_2017}.

%%%%%% PHYSICAL MOTIVATION - end %%%%%%

Mathematically, the McKean--Vlasov problem (\ref{eq:Intro_MVproblem}) can be recast in a number of ways. We denote the law of $X_t$ killed at the origin 
by $\nu_t$ and set $\nu_t(\phi) := \mathbb{E}[\phi(X_t)\mathbf{1}_{t < \tau}]$, for suitable test functions $\phi$. Then a simple application of It\^{o}'s formula  gives the nonlinear PDE
\begin{align}
\label{eq:Intro_PDEproblem}
\begin{cases}
\nu_t(\phi) = \nu_0(\phi) + \tfrac{1}{2}\int_0^t \nu_s(\phi'')ds - \alpha \int_0^t \nu_s(\phi')dL_s \\
L_t = 1 - \int_0^\infty \nu_s(dx),
\end{cases}
\end{align}
for $\phi \in C^2$ with $\phi(0)=0$ and $t \geq 0$. Writing $V_t$ for the density of $\nu_t$ (which exists by Proposition \ref{MinimalJumps_Prop_DensityExists}), we can formally integrate by parts in (\ref{eq:Intro_PDEproblem}) to find the Dirichlet problem
\begin{align}
\label{eq:Intro_DensityPDE}
\partial_{t}V_{t}(x)={\textstyle \frac{1}{2}}\partial_{xx}V_{t}(x)+\alpha L_{t}^{\prime}\partial_{x}V_{t}(x),\qquad L_{t}^{\prime}={\textstyle \frac{1}{2}}\partial_{x}V_{t}(0),\qquad V_{t}(0)=0.
\end{align}
 In other words, the law of $X$ solves the heat equation with a drift term proportional to the flux  across the Dirichlet boundary at zero --- the latter being a highly singular nonlocal nonlinearity. Setting $v(t,x):=-V_t(x-\alpha L_t)$ in (\ref{eq:Intro_DensityPDE}), the equations for $v$ and $L$ can be viewed as a Stefan problem with supercooling on the semi-infinite strip $(\alpha L_t,\infty)$. From this point of view, it is known that  $L'_t$ may explode in finite time as has been analysed (on a finite strip) in the series of papers \cite{fasano_primicerio, howi_ock1, howi_ock2, herrero_velazquez}.

 A third characterisation of (\ref{eq:Intro_MVproblem}) is as the solution to the integral equation
\begin{align}
\label{eq:Intro_IEproblem}
\int_0^\infty \Phi\Big( -\frac{x - \alpha L_t}{t^{1/2}} \Big) \nu_0(dx)
	= \int^t_0 \Phi\Big( \alpha \frac{L_t-L_s}{(t-s)^{1/2}} \Big) dL_s,
\end{align}
where $\Phi$ is the  c.d.f.~of the Normal distribution. This is a Volterra integral equation of the first kind, and a derivation can be obtained by following \cite{peskir_2002}. The formulation that is most helpful here is to view the problem as a fixed point of the map $\Gamma$ defined by
\begin{align}
\label{eq:Intro_DefnOfGamma}
\begin{cases}
 X_t^\ell = X_0 + B_t - \alpha \ell_t \\
\tau^\ell = \inf\{ t \geq 0 : X_t^\ell \leq 0 \}  \\
\Gamma[\ell]_t = \mathbb{P}(\tau^\ell \leq t), 
\end{cases}
\end{align}
that is, a solution to $\Gamma[L] = L$. We will use this map in stating and proving our main theorems. The key is to find a suitable space on which $\Gamma$ stabilises and to show that it is contractive (Theorems \ref{Intro_Thm_Minimality} \& \ref{Intro_Thm_FixedPoint}). Note that \cite{dirt_annalsAP_2015, nadtochiy_shkolnikov_2017} also take this approach to the problem, but our techniques differ in that they are entirely probabilistic and do not rely on PDE estimates.

%% FIG: Heat plot with blow-up
\begin{figure}
\begin{center}
\includegraphics[width=0.65\textwidth]{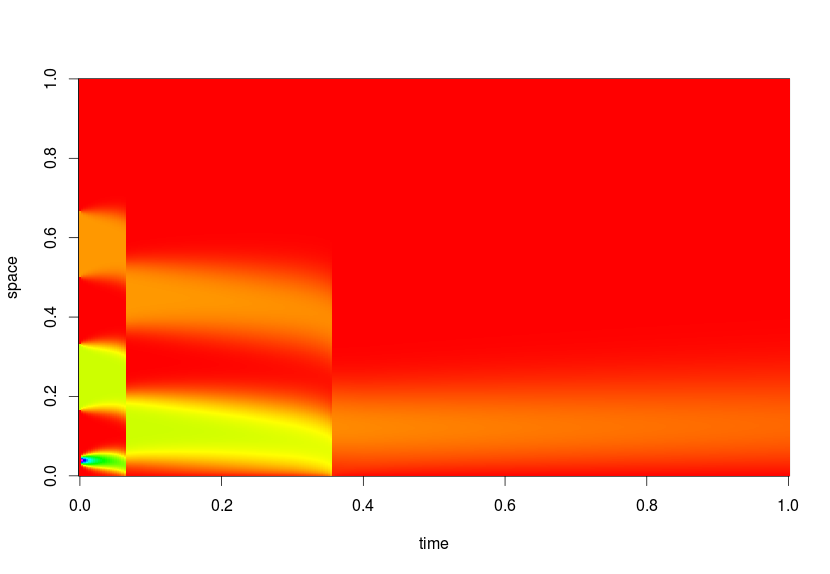}
\caption{\label{fig:Intro_BlowUpExample} Example of a solution to (\ref{eq:Intro_PDEproblem}, \ref{eq:Intro_DensityPDE}) showing two blow-up times. Pixel intensity represents the value of the solution density at that space-time coordinate. The initial condition is a linear combination of indicator functions of three disjoint sets. }
\end{center} \vspace{-10pt}
\end{figure}

A very interesting feature of the problem (\ref{eq:Intro_MVproblem}) is that it exhibits a phase transition in the continuity of solutions as the feedback strength, $\alpha$, increases. The methods of \cite{dirt_annalsAP_2015}, for the neuroscience version of the problem, show that for $\alpha$ sufficiently small (and $\nu_0 = \delta_x$ with $x > 0$) there is a unique solution to (\ref{eq:Intro_MVproblem}) in the class of  continuously differentiable functions. 
%(Actually, this result is for a system where $X$ is reset after hitting the boundary, but the essential difficulty due to the feedback effect from the boundary remains the same.) 
On the other hand, the extremely simple proof below (modified from \cite{carrillo2011}) shows that (\ref{eq:Intro_MVproblem}) cannot have a continuous solution for values of $\alpha$ that are sufficiently large. When this is the case, the positive feedback becomes too great and at some point in time the loss process, $L$, undergoes a jump discontinuity, which we call a \emph{blow-up}. In other words, in an infinitesimal period of time, a macroscopic proportion of the mass in the PDE (\ref{eq:Intro_PDEproblem}) is lost at the boundary --- i.e.~the system blows up (see Figure \ref{fig:Intro_BlowUpExample}). It is intriguing that this result can be proved so simply and with no technical estimates, although of course the threshold obtained below is not sharp and the proof does not reveal anything about the nature of the blow-ups.

%% THM: Large alpha
\begin{thm}[Blow-up for large $\alpha$]
\label{Intro_Thm_BlowUP}
Let $m_0 := \int_0^\infty x\nu_0(dx)$. If $\alpha > 2m_0$, then any solution to (\ref{eq:Intro_MVproblem}) cannot be continuous for all times.
\end{thm}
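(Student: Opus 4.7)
The plan is to derive a contradiction via a first-moment balance. Assume for contradiction that a continuous solution $L : [0,\infty) \to [0,1]$ exists. Since $\alpha L_t \geq 0$, we have $X_t \leq X_0 + B_t$, and recurrence of Brownian motion implies $\tau < \infty$ almost surely, so $L_t = \mathbb{P}(\tau \leq t) \uparrow 1$ as $t \to \infty$. Moreover, continuity of $L$ makes $X$ continuous, giving $X_\tau = 0$ on $\{\tau < \infty\}$ and hence $X_{t \wedge \tau} \geq 0$.

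I compute $\mathbb{E}[X_{t \wedge \tau}]$ in two ways. From the SDE $X_t = X_0 + B_t - \alpha L_t$, optional stopping at the bounded stopping time $t \wedge \tau$ yields $\mathbb{E}[B_{t \wedge \tau}] = 0$, and since $L$ is deterministic,
\begin{equation*}
0 \leq \mathbb{E}[X_{t \wedge \tau}] = m_0 - \alpha \, \mathbb{E}[L_{t \wedge \tau}].
\end{equation*}
Using that $L$ is the distribution function of $\tau$ with no atoms (by continuity),
\begin{equation*}
\mathbb{E}[L_{t \wedge \tau}] = \int_0^t L_s \, dL_s + L_t \, \mathbb{P}(\tau > t) = \tfrac{1}{2} L_t^2 + L_t(1 - L_t) = L_t - \tfrac{1}{2} L_t^2.
\end{equation*}
Combining gives $m_0 \geq \alpha \bigl( L_t - \tfrac{1}{2} L_t^2 \bigr)$ for every $t \geq 0$. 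The map $L \mapsto L - L^2/2$ is increasing on $[0,1]$ and tends to $1/2$ as $L \uparrow 1$, so sending $t \to \infty$ yields $m_0 \geq \alpha / 2$, contradicting $\alpha > 2 m_0$.

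There is no real obstacle here: the only technical point is the optional stopping, which is immediate because $t \wedge \tau \leq t$ is bounded and $\mathbb{E}[\sup_{s \leq t} |B_s|] < \infty$. Alternatively, the key identity $\mathbb{E}[X_t \mathbf{1}_{t<\tau}] = m_0 - \alpha \int_0^t (1 - L_s) \, dL_s$ can be read off directly from the PDE formulation (\ref{eq:Intro_PDEproblem}) with test function $\phi(x) = x$, approximated monotonically by bounded $C^2$ functions vanishing at zero. The simplicity of the argument is precisely what makes the threshold $2 m_0$ far from sharp and blind to the actual structure of the blow-up.
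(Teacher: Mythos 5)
Your proof is correct and follows essentially the same route as the paper: stop $X$ at $\tau$, take expectations to obtain $m_0 \geq \alpha\,\mathbb{E}[L_{t\wedge\tau}]$, use $\tau<\infty$ a.s. and continuity of $L$ to evaluate the resulting Stieltjes integral as $1/2$ in the limit, and conclude. The only cosmetic difference is that you compute $\mathbb{E}[L_{t\wedge\tau}] = L_t - \tfrac{1}{2}L_t^2$ explicitly for each finite $t$ before letting $t\to\infty$, whereas the paper passes to the limit first; both are fine.
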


\begin{proof}
For a contradiction, suppose $L$ solves (\ref{eq:Intro_MVproblem}) and is continuous. Stopping $X$ at $\tau$, we then get
\[
0 \leq X_{t \wedge \tau} = X_0 + B_{t \wedge \tau} - \alpha L_{t \wedge \tau}.
\]
Taking expectations and rearranging,
\[
m_0 \geq \alpha \mathbb{E}[L_{t \wedge \tau}]
	\to \alpha \mathbb{E}[L_{\tau}]
	= \alpha\int^\infty_0 L_s dL_s\qquad \textrm{as } t \to \infty, 
\]
where we note that $\tau < \infty$ a.s.~and $L_\infty = 1$, since Brownian motion hits every level with probability 1. As $L$ is continuous and increasing, the integral can be computed exactly, so we get
\[
m_0 \geq \tfrac{1}{2}\alpha (L_\infty^2 - L_0^2) = \tfrac{1}{2}\alpha,
\]
which is the required contradiction.
\end{proof}

From Theorem \ref{Intro_Thm_BlowUP} it is clear that, in general, we cannot restrict our search to solutions of (\ref{eq:Intro_MVproblem}) that are continuous and so we must allow c\`{a}dl\`{a}g solutions. If a solution has a jump of size $\Delta L_t$ at time $t$,  then the instantaneous loss must equal the mass of $\nu_{t-}$ absorbed at the boundary after a translation by  $-\alpha\Delta L_t$, that is:
\begin{equation}
\label{eq:Intro_JumpCondition}
\nu_{t-}(0,\alpha \Delta L_{t} ) = \Delta L_t,
\end{equation}
see Figure \ref{fig:Intro_Typical_IC}. Unfortunately, this equation alone is not sufficient to determine the jump sizes of a discontinuous solution. In particular, $\Delta L_{t} = 0$ is always a solution, but in general this is an invalid jump size by Theorem \ref{Intro_Thm_BlowUP}. To continue the solution after a blow-up, we must decide how to choose the jump size from the solution set of (\ref{eq:Intro_JumpCondition}). In \cite{dirt_SPA_2015} the authors introduce the term \emph{physical solution} for a solution, $L$, that satisfies 
\begin{equation}
\label{eq:Intro_PhysicalJumpCondition}
\Delta L_t = \inf\{ x \geq 0 : \nu_{t-}(0,\alpha  x) < x \}
\end{equation}
for all $t \geq 0$. The next result justifies that this is a natural condition, as it is the smallest possible choice of jump size that admits c\`adl\`ag solutions (see Section \ref{Sect_MinimalJmmps} for a proof and further discussion).

%% FIG: Typical i.c. after jump
\begin{figure}
	\begin{center}
		\begin{tikzpicture}[scale = 0.9]
			\draw[->] (-0.5,0) -- (4,0) node [right] {$x$};
			\draw[->] (0,0) -- (0,2.5) node [above] {$V_{t-}(x)$};
			\draw[-,  line width = 0.2mm] plot [smooth] coordinates { (0,0) (0.05, 0.7) (0.36, 0.17) (0.95, 0.14)  (1.3, 0.5) (1.8,2) (2.4,0.4) (4, 0.1) } ;
			
			\draw[->] (-0.5+6,0) -- (4+6,0) node [right] {$x$};
			\draw[->] (0+6,0) -- (0+6,2.5) node [above] {$V_{t-}(x + \alpha \Delta L_t)$};
			\draw[-,  line width = 0.2mm] plot [smooth] coordinates {   (0+6-0.36,0) (0.05+6-0.36, 0.7) (0.36+6-0.36, 0.17) (0.95+6-0.36, 0.14)  (1.3+6-0.36, 0.5) (1.8+6-0.36,2) (2.4+6-0.36,0.4) (4+6-0.36, 0.1) } ;
			\draw[-, fill = gray] plot coordinates { (0.01+6-0.36,0.01) (0.024+6-0.36, 0.52) (0.036+6-0.36, 0.62)  (0.053+6-0.36, 0.68) (0.07+6-0.36, 0.68) (0.1+6-0.36, 0.62) (0.22+6-0.36, 0.36) (0.29+6-0.36, 0.23) (0.36+6-0.36, 0.16) (6,0.01) } ;
			\draw[<-, dashed] (0+6-0.5,0.5) -- (0+6-0.5-1.5,1.1);
			\draw[] (0+6-0.5,1.0) node [above left] {$\nu_{t-}(0,\alpha \Delta L_t)$};
			
			\draw[->] (-0.5+12,0) -- (4+12,0) node [right] {$x$};
			\draw[->] (0+12,0) -- (0+12,2.5) node [above] {$V_{t}(x)$};
			\draw[-,  line width = 0.2mm] plot [smooth] coordinates { (0.36+12-0.36, 0.17) (0.522+12-0.36, 0.1136)
				(0.95+12-0.36, 0.14)  (1.3+12-0.36, 0.5) (1.8+12-0.36,2) (2.4+12-0.36,0.4) (4+12-0.36, 0.1) } ;
		\end{tikzpicture}
		\caption{\label{fig:Intro_Typical_IC} On the left, $V_{t-}$ is the density just before a jump of size $\Delta L_t$. This density is then translated by $\alpha \Delta L_t $ and the mass falling below the boundary at zero equals the change in the loss, which gives (\ref{eq:Intro_JumpCondition}). After the jump, the system is restarted from the density on the right. Notice that, in general, this new initial condition will not vanish at the origin. }
	\end{center} \vspace{-10pt}
\end{figure}
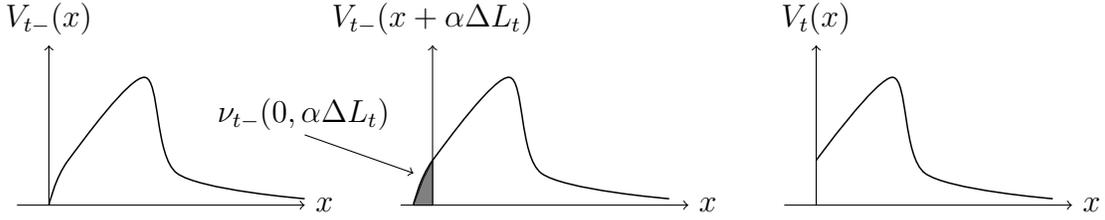

%% PROP: Minimal-jump
\begin{prop}[Physical solutions have minimal jumps]
\label{Intro_Prop_MinimalJumps}
Suppose $L$ is any c\`adl\`ag process satisfying (\ref{eq:Intro_MVproblem}). Then 
\[
\Delta L_t \geq \inf\{ x \geq 0 : \nu_{t-}(0,\alpha  x) < x \},
\]
for every $t \geq 0$. In particular, if the right-hand side is non-zero for some $t \geq 0$, then $L$ has a blow-up at time $t$.
\end{prop}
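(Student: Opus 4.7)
The plan is to argue by contradiction. Put $x^\ast := \inf\{x \geq 0 : \nu_{t-}(0,\alpha x) < x\}$; if $x^\ast = 0$ there is nothing to prove, so assume $x^\ast > 0$ and suppose $\Delta L_t = y_0 < x^\ast$. By Proposition \ref{MinimalJumps_Prop_DensityExists}, $x \mapsto \nu_{t-}(0,\alpha x)$ is continuous, so by the definition of $x^\ast$, $\nu_{t-}(0,\alpha y) \geq y$ for every $y \in [0,x^\ast]$, with equality at $y = x^\ast$.

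The set of particles absorbed at the jump is $\{X_{t-}^L \in (0, \alpha y_0]\}$, so the jump condition (\ref{eq:Intro_JumpCondition}) reads $y_0 = \nu_{t-}(0, \alpha y_0]$. Subtracting this from the inequality $\nu_{t-}(0, \alpha(y_0+\epsilon)] \geq y_0 + \epsilon$ (valid for $\epsilon \in (0, x^\ast - y_0)$) gives $\nu_{t-}(\alpha y_0, \alpha(y_0+\epsilon)] \geq \epsilon$. Since the jump translates the pre-jump law by $-\alpha y_0$ and absorbs the mass that crosses zero, the post-jump law $\nu_t$ inherits the \emph{trapped mass} condition
\[
\nu_t(0,\alpha\epsilon) \;\geq\; \epsilon \qquad\text{for every } \epsilon \in (0, x^\ast - y_0).
\]

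The next step is to use this trapped mass to contradict the c\`adl\`ag regularity of $L$ at $t$. Conditional on $\tau > t$, the restart $s \mapsto L_{t+s} - L_t$ is itself a c\`adl\`ag solution of a McKean--Vlasov problem with initial law $\nu_t/(1-L_t)$ and initial loss zero; the trapped-mass condition says that this restarted problem has its own physical threshold bounded below by $x^\ast - y_0 > 0$, so some strictly positive extra mass must be absorbed, and the c\`adl\`ag assumption forces this to happen ``all at once'' at $t$ itself, contradicting $\Delta L_t = y_0$. To make this rigorous without circular appeal to the present proposition, the plan is to combine the reflection-principle lower bound $L_{t+\delta} - L_t \geq \int_0^\infty 2\Phi(-y/\sqrt{\delta})\,\nu_t(dy) > 0$ with a matching upper bound obtained by freezing the drift at $-\alpha(L_{t+\delta}-L_t)$ on $[t,t+\delta]$, and to bootstrap using the trapped-mass inequality to exclude the intermediate regime $L_{t+\delta}-L_t \in (0, x^\ast - y_0)$ as $\delta \downarrow 0$. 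The hard part is this last exclusion: a naive heat-kernel estimate only yields $L_{t+\delta}-L_t = O(\sqrt{\delta})$, which is on its own compatible with right-continuity, so the cascade must be driven by the self-consistency of the McKean--Vlasov dynamics rather than by any single one-shot comparison. The second sentence of the proposition is then immediate: $x^\ast > 0$ forces $\Delta L_t \geq x^\ast > 0$, i.e., $L$ has a blow-up at $t$.
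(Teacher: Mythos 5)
Your setup is sound and matches the paper's reduction: translating the pre-jump law by $-\alpha y_0$ and using $\nu_{t-}(0,\alpha y)\geq y$ for $y<x^\ast$ to get the trapped-mass condition $\nu_t(0,\alpha\epsilon)\geq\epsilon$ for $\epsilon<x^\ast-y_0$, and then restarting at $t$ (the paper's Remark \ref{MinimalJumps_Rem_StartingNotZero}), reduces everything to showing that a strictly positive threshold at time $0$ is incompatible with $L_h\to 0$ as $h\downarrow 0$. But that incompatibility is precisely the content of the proposition, and your proposal stops exactly there: you write ``the plan is to\dots'' and flag the exclusion of the intermediate regime as ``the hard part'' without carrying it out. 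As you correctly observe, the one-sided reflection bound $L_h\geq\int_0^\infty 2\Phi(-y/\sqrt{h})\,\nu_0(dy)$ tends to $0$ with $h$ (since $\nu_0$ has no atom at the origin), so it cannot by itself produce a contradiction. This is a genuine gap, not a routine verification.

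The paper closes this gap with a specific self-consistency trick that your sketch does not contain. From $\nu_0(0,x)\geq\alpha^{-1}x$ on $(0,\alpha x_0)$ one first deduces, for any decreasing $F$, that $\int_0^{\alpha x_0}F\,d\nu_0\geq\alpha^{-1}\int_0^{\alpha x_0}F(x)\,dx$; applying this with $F(x)=\mathbb{P}(B_h\leq \alpha L_h - x)$ and keeping the shift $\alpha L_h$ \emph{inside} the hitting probability yields
\[
\alpha h^{-1/2}L_h\;\geq\;\int_{-\alpha h^{-1/2}L_h}^{\,c_0 h^{-1/2}}\Phi(-y)\,dy,
\qquad c_0:=\tfrac{1}{2}\alpha x_0,
\]
for all small $h$ (using $L_h\to0$ to replace the upper limit $\alpha h^{-1/2}(x_0-L_h)$ by $c_0h^{-1/2}$). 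The right-hand side is then evaluated exactly via the antiderivative $\Psi(x)=\phi(x)-x\Phi(-x)$, which satisfies $\Psi'=-\Phi(-\cdot)$, and the inequality rearranges to $\Psi(\alpha h^{-1/2}L_h)\leq\Psi(c_0h^{-1/2})$. Since $\Psi$ is strictly decreasing this forces $\alpha L_h\geq c_0$, i.e.\ $L_h\geq x_0/2$ for all small $h$, contradicting $L_h\to0$. Note that the lower bound on $L_h$ depends on $L_h$ itself through the lower limit of integration; it is exactly this feedback, made quantitative through $\Psi$, that turns an $O(\sqrt{h})$ one-shot estimate into a macroscopic lower bound. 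Without an argument of this kind (or an equivalent iteration of the $f_n$ cascade from Section \ref{Sect_MinimalJmmps} together with a continuity-in-$h$ argument), your proof is incomplete.
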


\noindent It is helpful to consider the density function, $V_{t-}$, of $\nu_{t-}$ in light of Proposition \ref{Intro_Prop_MinimalJumps}. If, at some time $t$, $V_{t-}$ is greater than or equal to the critical value of $\alpha^{-1}$ on a non-zero interval about the origin, then a blow-up in $L$ is forced to occur at that time.

In \cite{dirt_SPA_2015} it is shown that there exist (global) physical solutions to the neuroscience version of equation (\ref{eq:Intro_MVproblem}), for any $\alpha > 0$, albeit with an initial measure $\nu_0$ that vanishes in a neighbourhood of zero. Those solutions arise as topological limit points of a corresponding finite particle system analogous to (\ref{eq:Intro_FiniteSystem_Def}).  Consequently, the authors are unable to establish regularity results on the solutions. The main advantage of a fixed point argument is that solutions are guaranteed to have known regularity, at least on a small time interval.

Our motivation in this paper is to make progress towards the following conjecture:

%% CONJECTURE: Global uniqueness
\begin{conjecture}[Global uniqueness]
\label{Into_Conj_GlobalUniqueness}
Solutions to (\ref{Intro_Thm_BlowUP}) that satisfy the physical jump condition (\ref{eq:Intro_PhysicalJumpCondition}) are unique. Furthermore, between jump times the solution is continuously differentiable, and we predict $\sqrt{t}$-singularities immediately before and after jumps, which is to say: For all $t_0 > 0$ that satisfy $\Delta L_{t_0} > 0$, we have $L'_{t_0 + h} = O(|h|^{-1/2})$, as $h \to 0$.
\end{conjecture}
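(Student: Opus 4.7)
The plan is to prove global uniqueness by a concatenation argument along the (conjecturally discrete) sequence of blow-up times $0 = t_0 < t_1 < t_2 < \cdots$. On each open interval $(t_k, t_{k+1})$, continuity of $L$ together with the contraction of $\Gamma$ from (\ref{eq:Intro_DefnOfGamma}) should force local uniqueness and continuous differentiability, via the paper's fixed-point theorem \ref{Intro_Thm_FixedPoint} restarted from the law $\nu_{t_k}$. At each $t_k$ the size of $\Delta L_{t_k}$ is uniquely pinned down by combining the physical condition (\ref{eq:Intro_PhysicalJumpCondition}) with the minimality in Proposition \ref{Intro_Prop_MinimalJumps}, after which the procedure iterates.

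To make this work one needs, first, to extend the local theory to initial conditions whose densities do not vanish at the origin: after a blow-up, $V_{t_k}$ is obtained by translating $V_{t_k^-}$ leftward by $\alpha\Delta L_{t_k}$ and deleting the mass that crosses zero (see Figure \ref{fig:Intro_Typical_IC}), producing a density with $V_{t_k}(0) > 0$ in general. With such an initial condition, the heat kernel gives $L'_{t_k + h} = V_{t_k}(0)/\sqrt{2\pi h} + o(h^{-1/2})$ to leading order, so the $\sqrt{t}$-singularity immediately after a jump should emerge automatically from the extended local theory, provided one works in a function space weighted by $h^{1/2}$. The symmetric singularity \emph{before} a jump should follow by a separate PDE argument on (\ref{eq:Intro_DensityPDE}): if $V_{t_k^-}$ is about to deposit macroscopic mass at the boundary instantaneously, then the flux $L'_t = \tfrac{1}{2}\partial_x V_t(0)$ must diverge like $(t_k - t)^{-1/2}$, which one could bootstrap from the Volterra equation (\ref{eq:Intro_IEproblem}).

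Second, uniqueness of the selected jump size needs to be upgraded from the one-sided minimality in Proposition \ref{Intro_Prop_MinimalJumps} to equality. The trapping technique advertised in the abstract should furnish this: given two c\`adl\`ag solutions agreeing on $[0, t_k)$, one argues that any continuation taking a jump strictly larger than the infimum in (\ref{eq:Intro_PhysicalJumpCondition}) can be sandwiched between physical continuations and then ruled out by applying the contraction of $\Gamma$ to the post-jump initial condition.

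The main obstacle, I expect, is showing that the sequence $(t_k)$ does not accumulate at a finite time. Although $\sum_k \Delta L_{t_k} \leq 1$ implies there are at most countably many blow-ups, it does not rule out $t_k \uparrow t_\infty < \infty$. Ruling out this accumulation requires a quantitative lower bound on the inter-blow-up times in terms of the shape of $V_{t_k}$ near the origin: one needs that, after a jump, the density becomes sufficiently subcritical (below $\alpha^{-1}$) on a thick enough neighbourhood of the boundary that a definite amount of time must elapse before the drift $-\alpha L'_t$ can push it back to criticality. This is a delicate balance between the diffusive smoothing and the positive feedback, and it is where I expect the main technical effort, together with the matching of function-space norms across each singular jump, to be concentrated.
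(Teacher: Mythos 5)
What you have written is a programme, not a proof, and the statement you are addressing is presented in the paper as a \emph{conjecture} that the authors explicitly say they are ``far from proving''; the paper contains no proof of it, and neither does your proposal. Every major step in your outline is itself an open problem. Most centrally, restarting the local theory from $\nu_{t_k}$ after a blow-up is precisely Conjecture \ref{Into_Conj_UniquenessNonVanish}: after a jump the density $V_{t_k}$ does not, in general, vanish at the origin, so by comparison with the driftless case one has $L_t - L_{t_k} \geq c\sqrt{t-t_k}$, hence $L' \notin L^2$ near $t_k$ and $L \notin H^1(t_k, t_k+\delta)$ for any $\delta>0$. The entire contraction and stability machinery of Theorems \ref{Intro_Thm_Minimality} and \ref{Intro_Thm_FixedPoint} rests on a Girsanov change of measure (Lemma \ref{Fixed_Lem_Girsanov}) that requires exactly this $H^1$ control, so the fixed-point argument cannot even be formulated on the post-jump interval. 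Your suggestion of ``a function space weighted by $h^{1/2}$'' names the difficulty without resolving it: no contraction estimate, no stability estimate, and no analogue of assumption (\ref{eq:Intro_DensityAssumption}) with $\beta = 0$ is supplied, and the paper notes explicitly that the Girsanov approach is not expected to be viable in this regime.

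The remaining steps have the same character. Proposition \ref{Intro_Prop_MinimalJumps} gives only the one-sided bound $\Delta L_{t_k} \geq \inf\{x \geq 0 : \nu_{t_k-}(0,\alpha x) < x\}$; upgrading this to equality via trapping presupposes the existence of a minimal differentiable continuation immediately after the jump, which is again the unavailable local theory --- the trapping construction of Section \ref{Sect_Bootstrap} is carried out only for initial densities with power-law decay and only up to the first $H^1$ explosion time, which occurs no later than the first jump. The $(t_k-t)^{-1/2}$ divergence of $L'$ before a jump is asserted rather than derived from (\ref{eq:Intro_IEproblem}). Finally, you correctly flag that nothing prevents $t_k \uparrow t_\infty < \infty$, but you offer no quantitative lower bound on inter-jump times. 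In short, your proposal is a reasonable map of the obstructions, largely consistent with the discussion surrounding Conjectures \ref{Into_Conj_GlobalUniqueness} and \ref{Into_Conj_UniquenessNonVanish} in the paper, but it closes none of them, and the statement remains unproven.
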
 

\noindent Currently, we are far from proving Conjecture \ref{Into_Conj_GlobalUniqueness}, and the major obstruction concerns the initial conditions. To see why, notice that after a jump has taken place, we must, in general, restart the system from an initial law that has a density which does not vanish at the origin (see Figure \ref{fig:Intro_Typical_IC}, as well as Proposition \ref{MinimalJumps_Prop_DensityExists} concerning the existence of a density for $\nu$ at all times). In fact, without further analysis, all we can say about the measure $\nu_t$ after a blow-up at time $t$ is that
\[
\inf\{x \geq 0 : \nu_t(0,\alpha x) < x\} = 0.
\]
Therefore, to attack Conjecture \ref{Into_Conj_GlobalUniqueness} it is necessary to make progress towards the following simpler goal:

%% CONJECTURE: Simplfication
\begin{conjecture}[Uniqueness for non-vanishing initial laws]
\label{Into_Conj_UniquenessNonVanish}
Suppose $\nu_0$ has a density and satisfies $\inf\{x \geq 0 : \nu_0(0,\alpha x) < x\} = 0$. Then the solution, L, to (\ref{eq:Intro_MVproblem}) is unique and $C^1$ up to a small time, and it satisfies  $ L_t' = O(t^{-1/2})$ as $t \downarrow 0$. 
\end{conjecture}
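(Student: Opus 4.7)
My strategy is to prove the conjecture via a fixed-point argument for the map $\Gamma$ from (\ref{eq:Intro_DefnOfGamma}), adapted to accommodate the $t^{-1/2}$ singular derivative that a non-vanishing initial density forces on the solution. The hypothesis $\inf\{x \geq 0 : \nu_0(0, \alpha x) < x\} = 0$ rules out an instantaneous blow-up and provides a mild subcritical bound on the density $V_0$ of $\nu_0$ near zero, in the sense that $\liminf_{x \downarrow 0} x^{-1}\nu_0(0, \alpha x) \leq 1$; together with Proposition \ref{MinimalJumps_Prop_DensityExists}, this is what I would exploit to obtain a short-time fixed point.

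Concretely, I would work on the complete metric space $\mathcal{M}_{T,K}$ of continuous, nondecreasing functions $\ell:[0,T]\to [0,\infty)$ with $\ell_0 = 0$ and $\sup_{0 < t \leq T} t^{-1/2} \ell_t \leq K$, endowed with the weighted distance $d_*(\ell,\tilde\ell) := \sup_{0 < t \leq T} t^{-1/2}|\ell_t - \tilde\ell_t|$. To show $\Gamma$ preserves $\mathcal{M}_{T,K}$ for a suitable $K$ and small $T$, I would use the representation
\[
\Gamma[\ell]_t = \int_0^\infty \mathbb{P}\bigl( \inf_{s \leq t}(x + B_s - \alpha \ell_s) \leq 0 \bigr) V_0(x)\, dx,
\]
together with the reflection principle and the local bound on $V_0$ near zero, to obtain $\Gamma[\ell]_t \leq C(K)\sqrt{t}$. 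For the contraction, a Girsanov or coupling argument should give an estimate of the form
\[
|\Gamma[\ell]_t - \Gamma[\tilde \ell]_t| \leq \alpha \int_0^t |\ell_s - \tilde\ell_s|\, \rho^\ell_s\, ds,
\]
where $\rho^\ell_s$ is the boundary flux of $X^\ell$ at time $s$. The main obstacle is precisely controlling this flux: when $V_0(0) > 0$ one expects only $\rho^\ell_s = O(s^{-1/2})$, so the contraction estimate is tight and requires a trapping-type argument confining $\rho^\ell_s$ below a constant multiple of $s^{-1/2}$ uniformly in $\ell \in \mathcal{M}_{T,K}$. This trapping is, I believe, the crux of the proof, and is the point at which the subcriticality of $V_0$ becomes decisive.

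Once $L$ has been identified as the unique fixed point in $\mathcal{M}_{T,K}$, I would deduce $C^1$ regularity and the sharp bound $L_t' = O(t^{-1/2})$ from the PDE formulation (\ref{eq:Intro_DensityPDE}). Standard parabolic regularity for the Dirichlet problem on the moving domain $(\alpha L_t,\infty)$ gives smoothness of $V_t$ away from $t=0$, and the identity $L_t' = \tfrac{1}{2}\partial_x V_t(0)$ combined with the Gaussian boundary-derivative estimate yields the $t^{-1/2}$ singularity. Finally, any c\`adl\`ag solution $\tilde L$ to (\ref{eq:Intro_MVproblem}) satisfying the physical jump condition (\ref{eq:Intro_PhysicalJumpCondition}) must be continuous on $[0,T]$, since by Proposition \ref{Intro_Prop_MinimalJumps} its jumps coincide with the minimal ones, which the hypothesis forces to vanish for small $t$; so $\tilde L$ lies in $\mathcal{M}_{T,K}$ after possibly enlarging $K$, and uniqueness follows from the contraction.
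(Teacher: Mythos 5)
This statement is Conjecture \ref{Into_Conj_UniquenessNonVanish}: the paper explicitly presents it as an \emph{open problem} and offers no proof, so there is nothing to compare your proposal against except the paper's own discussion of why the obvious strategies fail. Measured against that discussion, your proposal is not a proof but a plan whose decisive steps are left undone, and at least one of those steps is one the paper explains cannot work as stated. You defer the contraction estimate to ``a Girsanov or coupling argument,'' but the paper points out that the Girsanov route requires $\ell \in H^1(0,t_0)$, whereas the hypothesis $V_0(0+)>0$ forces $\ell_t \geq \mathrm{const.}\times\sqrt{t}$, hence $\ell'_t \sim t^{-1/2}$, which is not square-integrable near $t=0$; the exponential Girsanov factor $\exp\{c\Vert \ell'\Vert^2_{L^2(0,t)}\}$ is then infinite. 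The subsequent ``trapping-type argument confining $\rho^\ell_s$ below a constant multiple of $s^{-1/2}$'' is precisely the missing idea --- you acknowledge it is ``the crux of the proof'' and do not supply it. A proposal whose crux is an unproven claim is not a proof of an open conjecture.

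Two further steps are substantively wrong rather than merely incomplete. First, the hypothesis $\inf\{x\geq 0 : \nu_0(0,\alpha x)<x\}=0$ yields only $\liminf_{x\downarrow 0}x^{-1}\nu_0(0,\alpha x)\leq 1$ along some sequence $x_n\downarrow 0$; it gives no pointwise upper bound on $V_0$ in a neighbourhood of the origin, so the ``mild subcritical bound on the density'' you propose to exploit in the stability estimate $\Gamma[\ell]_t\leq C(K)\sqrt{t}$ is not available. Second, in the final uniqueness step you assert that a c\`adl\`ag solution's ``jumps coincide with the minimal ones'' by Proposition \ref{Intro_Prop_MinimalJumps}; that proposition gives only the one-sided bound $\Delta \bar{L}_t \geq \inf\{x\geq 0:\nu_{t-}(0,\alpha x)<x\}$, so a generic c\`adl\`ag solution may jump when the minimal jump is zero. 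This is exactly why the paper's uniqueness argument (Section \ref{Sect_Bootstrap}) needs the minimality result of Theorem \ref{Intro_Thm_Minimality} for the lower envelope and the $\varepsilon$-deleted solutions for the upper envelope, neither of which your proposal reconstructs in the non-vanishing setting. As written, the proposal restates the conjecture's difficulties rather than resolving them.
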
 

Initial conditions that do not vanish at the origin are currently outside the scope of known results, and the results that do exist only tackle the uniqueness in a too restrictive class of candidate solutions. In the literature, the closest to Conjecture \ref{Into_Conj_UniquenessNonVanish} is \cite[Thm.~2.6]{nadtochiy_shkolnikov_2017} which is established for a slight variant of our problem: Given an initial density $V_0 \in H^1(0,\infty)$ with $V_0(0)=0$, it gives existence and uniqueness up to the first time the $L^2$-norm of $L^{\prime}$ explodes in the class of candidate solutions for which $L\in H^{1}(0,t_{0})$ for some $t_0>0$. Here $H^1(a,b)$ denotes the usual Sobolev space with one weak derivative in $L^2(a,b)$. We note that these conditions on $V_0$ imply that it decays like $o(x^{1/2})$ near the origin, so it is far from what is needed in order to proceed to a global uniqueness theory.  The aforementioned was preceded by \cite[Thm.~4.1]{dirt_annalsAP_2015}, which gives existence and uniqueness on a small time interval in the class of $C^1$ solutions, starting from an initial density that decays like $O(x)$ as $x \to 0$. The connection between the boundary decay of the initial condition and the short-time regularity of solutions will feature prominently in our results below.

Moreover, the existing results in the literature leave open the question of whether there is (short-time) uniqueness in the wider class of c\`{a}dl\`{a}g solutions. This is more than just a technical curiosity: Indeed, the physical and financial motivations for (\ref{eq:Intro_MVproblem}) derive from the corresponding particle system, as presented in (\ref{eq:Intro_FiniteSystem_Def}), but the requisite regularity of its limit points is not known and could be difficult to verify a priori --- all we know is that they are c\`{a}dl\`{a}g. Thus, it obstructs the full convergence in law of the finite system (i.e.~the propagation of chaos).

%% MAIN RESULTS SUBSECTION
\subsection{Main results}

Our contribution here is to construct an $H^1$ solution to (\ref{eq:Intro_MVproblem}) up to the first time its $H^1$ norm explodes, and to show that it is the unique solution on this time interval amongst all possible c\`{a}dl\`{a}g solutions (Theorem \ref{Intro_Thm_MainUniqueness}). We allow initial densities with any power law decay at the origin, that is, decay of order $O(x^\beta)$ as $x\rightarrow0$ for some $0 < \beta < 1$. To be precise, we consider initial conditions, $\nu_0$, that have a bounded initial density, $V_0$, for which we can find constants $C, D, x_\star >0$ such that
\begin{equation}
\label{eq:Intro_DensityAssumption}
V_0(x) \leq Cx^\beta \mathbf{1}_{x \leq x_\star} + D \mathbf{1}_{x > x_\star},
	\qquad \textrm{for every } x > 0. 
\end{equation}
A key observation is that it is only the values of $C$, $x_\star$ and $\beta$ that determine the behaviour of solutions in short time (Theorem \ref{Intro_Thm_FixedPoint}).
The reason for assuming an initial density is that this state is reached after any non-zero period of time anyway (Proposition \ref{MinimalJumps_Prop_DensityExists}). Below, we state our main results chronologically to make it clear how they are connected.

First we introduce the following subsets of $H^1$: 

%% DEF: S
\begin{defn}
\label{Intro_Def_SetS}
For $\gamma \in (0,\tfrac{1}{2})$, $A > 0$ and $t_0 >0 $, let $\mathcal{S}(\gamma , A, t_0)$ denote the subset of $H^1(0,t_0)$ given by
\[
\mathcal{S}(\gamma , A, t_0)
	:= \{ \ell \in H^1(0,t_0) \textrm{ such that } \ell'_t \leq A t^{-\gamma} \textrm{ for almost all } t \in [0,t_0] \}.
\] 
\end{defn}

\noindent Our first argument is to show that, on these sets, $\Gamma$ is an $L^\infty$-contraction. The proof follows by comparing the first hitting times of a single Brownian motion driven by two different drift functions (Proposition \ref{Unique_Prop_Comparison}), which is a coupling of the $X^\ell$ processes in (\ref{eq:Intro_DefnOfGamma}) to the same Brownian motion $B$. As a by-product of this method we can deduce that differentiable solutions are minimal in the class of potential c\`{a}dl\`{a}g solutions, which is an important ingredient in the main uniqueness result (Theorem \ref{Intro_Thm_MainUniqueness}). The power of the technique lies in the ability to estimate the positive part of the difference $(L_t - \bar{L}_t)^+$, for two solutions $L$ and $\bar{L}$, assuming only the regularity properties of $L$ and not $\bar{L}$.

%% THM: Minimality
\begin{thm}[Contraction and minimality]
\label{Intro_Thm_Minimality}
For any $\gamma \in (0,\tfrac{1}{2})$ and $A>0$, there exists $t_0 > 0$ such that for all $\ell, \bar{\ell} \in \mathcal{S}(\gamma, A,t_0)$
\[
\Vert \Gamma[\ell] - \Gamma[\bar{\ell}] \Vert_{L^\infty(0,t_0)} \leq \tfrac{1}{2} \Vert \ell - \bar{\ell} \Vert_{L^\infty(0,t_0)}.
\]

Moreover, if there exists a solution $L \in \mathcal{S}(\gamma,A,t_0)$ to (\ref{eq:Intro_MVproblem}) for some $\gamma\in(0,\tfrac{1}{2})$, $A>0$ and $t_0 > 0$ and another solution $\bar{L}$ that is c\`{a}dl\`{a}g, then $\bar{L}_t \geq L_t$ for all $t < t_0$. 
\end{thm}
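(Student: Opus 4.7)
The common ingredient is a one-sided $L^\infty$ estimate that only requires regularity of one of the two functions being compared; both assertions follow by elementary manipulations from this. I realise all candidate processes $X^\ell_t = X_0 + B_t - \alpha \ell_t$ on a common probability space carrying $X_0 \sim \nu_0$ and an independent Brownian motion $B$. Under this coupling, $\ell \mapsto X^\ell$ is order-reversing, so $\Gamma$ is monotone increasing, giving the basic bound
$$(\Gamma[\ell]_t - \Gamma[\bar\ell]_t)^+ \leq \mathbb{P}(\tau^\ell \leq t < \tau^{\bar\ell}).$$
On the event on the right, $X^\ell_{\tau^\ell} = 0$ while $X^{\bar\ell}_{\tau^\ell} = \alpha(\ell - \bar\ell)_{\tau^\ell} > 0$, so only the positive part of $\ell - \bar\ell$ can contribute to the bound.

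\textbf{Core estimate.} Conditioning on $\tau^\ell = s$ and using the strong Markov property together with the monotonicity of $\bar\ell$, the reflection-principle bound $\mathbb{P}(\inf_{[0, T]} W > -x) \leq 2x/\sqrt{2\pi T}$ (for Brownian motion $W$ starting at $0$) yields
$$\mathbb{P}\bigl(X^{\bar\ell}_u > 0 \text{ for all } u \in [s, t] \bigm| \mathcal{F}_s\bigr) \leq \frac{2\alpha (\ell_s - \bar\ell_s)^+}{\sqrt{2\pi(t - s)}}.$$
Integrating against the distribution of $\tau^\ell$ and using that its density is bounded by $A s^{-\gamma}$ (this being the content of the quantitative drift-comparison Proposition \ref{Unique_Prop_Comparison}, which leverages the initial boundary decay (\ref{eq:Intro_DensityAssumption}) together with $\ell \in \mathcal{S}(\gamma, A, t_0)$), a standard beta-function computation gives
$$\mathbb{P}(\tau^\ell \leq t < \tau^{\bar\ell}) \leq \kappa\, t^{1/2 - \gamma}\, \sup_{s \leq t}(\ell_s - \bar\ell_s)^+, \qquad \kappa := \frac{2\alpha A\,\mathrm{B}(1-\gamma, 1/2)}{\sqrt{2\pi}},$$
which is legitimate because $\gamma < 1/2$. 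The crucial feature is that the prefactor depends on the regularity of $\ell$ only; no control on $\bar\ell$ enters beyond its monotonicity.

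\textbf{Deriving both parts.} For the contraction, apply the above to both $(\ell, \bar\ell)$ and $(\bar\ell, \ell)$ in $\mathcal{S}(\gamma, A, t_0)$ and sum, obtaining $\|\Gamma[\ell] - \Gamma[\bar\ell]\|_{L^\infty(0, t_0)} \leq 2\kappa t_0^{1/2 - \gamma}\, \|\ell - \bar\ell\|_{L^\infty(0, t_0)}$; since $\gamma < 1/2$, choosing $t_0$ small enough that $2\kappa t_0^{1/2-\gamma} \leq \tfrac{1}{2}$ closes the first part. For minimality, apply the estimate with $\ell = L \in \mathcal{S}(\gamma,A,t_0)$ and $\bar\ell = \bar L$ only c\`adl\`ag; the asymmetry of the estimate is exactly what makes this admissible. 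Writing $g := \sup_{t \leq t_0}(L_t - \bar L_t)^+$ and using $L = \Gamma[L]$, $\bar L = \Gamma[\bar L]$, I obtain $g \leq \kappa t_0^{1/2 - \gamma} g$, which forces $g = 0$ as soon as $t_0$ satisfies $\kappa t_0^{1/2 - \gamma} < 1$; hence $\bar L_t \geq L_t$ on $[0, t_0)$.

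\textbf{Main obstacle.} The principal technical difficulty is establishing the density bound $\Gamma[\ell]'_s \leq A\, s^{-\gamma}$ for $\ell \in \mathcal{S}(\gamma,A,t_0)$, which quantitatively couples the boundary decay exponent $\beta$ of $\nu_0$ in (\ref{eq:Intro_DensityAssumption}) to the drift regularity $\gamma$. This is the content of Proposition \ref{Unique_Prop_Comparison}, whose proof compares hitting times under different drifts by shifting the initial condition. Given this ingredient, the rest of the argument reduces to the reflection-principle calculation above combined with the Gr\"onwall-style sup-norm iteration just described.
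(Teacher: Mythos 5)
Your strategy is the same as the paper's: couple $X^{\ell}$ and $X^{\bar\ell}$ to a common Brownian motion, bound $(\Gamma[\ell]_t-\Gamma[\bar\ell]_t)^+$ by $\mathbb{P}(\tau^\ell\leq t<\tau^{\bar\ell})$, condition on $\tau^\ell=s$ where $X^{\bar\ell}_s=\alpha(\ell_s-\bar\ell_s)$, discard the monotone drift of $\bar\ell$, and linearise the reflection-principle bound to obtain the kernel $(t-s)^{-1/2}$ against $d\Gamma[\ell]_s$. That part is correct, and you have rightly identified the asymmetry (only $\ell$'s regularity is needed) as the engine of the minimality claim. However, there are two genuine problems. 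First, the bound on the density of $\tau^\ell$, namely $\Gamma[\ell]'_s\leq As^{-\gamma}$, is \emph{not} the content of Proposition \ref{Unique_Prop_Comparison}: that proposition is precisely the comparison estimate you re-derive, and it says nothing about the regularity of $\Gamma[\ell]$. The hypothesis $\ell\in\mathcal{S}(\gamma,A,t_0)$ controls $\ell'$, not $\Gamma[\ell]'$; passing from one to the other is the stability result (Theorem \ref{Intro_Thm_FixedPoint}, proved via the Girsanov and heat-kernel estimates of Section \ref{Sect_FixedPoint}) and is a substantial separate argument. For the minimality part this is harmless, since there $\ell=L=\Gamma[L]\in\mathcal{S}(\gamma,A,t_0)$ by hypothesis, but for the contraction part you are invoking an ingredient from the wrong place (the paper's Corollary \ref{Unique_Cor_DifferentialComparison} makes the needed hypothesis on $\Gamma[\ell]$ explicit).

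Second, your proof of minimality only works when $t_0$ is small enough that $\kappa t_0^{1/2-\gamma}<1$. In the second assertion of the theorem $t_0$ is \emph{given} --- it is whatever horizon the solution $L\in\mathcal{S}(\gamma,A,t_0)$ lives on --- and the conclusion $\bar L_t\geq L_t$ must hold for all $t<t_0$, not just on an interval you are free to shrink. The paper closes this by iteration: the sup-norm argument gives $(L-\bar L)^+\equiv 0$ on $[0,t_1]$ for some small $t_1$, then one restarts on $[t_1,t_2]$ using that $\int_{t_1}^{t}(t-s)^{-1/2}s^{-\gamma}\,ds$ can be made arbitrarily small by taking $t_2-t_1$ small, and one checks (using left-continuity) that the increasing sequence of times at which equality has been established cannot accumulate strictly before $t_0$. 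This continuation step is missing from your argument and is needed for the statement as written.
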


To construct a fixed point solution, we find choices of the parameters $\gamma$, $A$ and $t_0$ such that the map $\Gamma$ stabilises on the set $\mathcal{S}(\gamma, A, t_0)$. An interesting product of our technique is that we are able to recover the exact regularity of the solutions at time zero from the decay of the initial condition near the Dirichlet boundary. Thus, the main factor in the short-time growth of solutions is the behaviour of the heat equation with absorbing boundary conditions and not the feedback effect in the model. This should be an indication that something new is required to tackle Conjecture \ref{Into_Conj_GlobalUniqueness}. Crucially, we rely on Girsanov's Theorem to control $t \mapsto B_t - \alpha \ell_t$, for which we require $\ell \in H^1(0,t_0)$. Notice, however, that if $V_0(+0) > 0$ as in the hypothesis of Conjecture \ref{Into_Conj_GlobalUniqueness}, then we must have $\ell_t \geq \mathrm{const.} \times \sqrt{t}$ (by comparison with the case $\alpha = 0$), and so we can no longer expect this approach to be viable.

%% THM: Fixed point
\begin{thm}[Stability and fixed point]
\label{Intro_Thm_FixedPoint}
There exists a constant $K > 0$ depending only on $\beta$, $C$ and $x_\star$ such that for all $\varepsilon > 0$ there exists $t_0$ for which
\[
\Gamma : \mathcal{S}(\tfrac{1-\beta}{2}, K + \varepsilon, t_0) \to \mathcal{S}(\tfrac{1-\beta}{2}, K + \varepsilon, t_0).
\]

Hence $\Gamma$ has a fixed point, $L = \Gamma[L]$, in $\mathcal{S}(\tfrac{1-\beta}{2}, K + \varepsilon, t_0)$ and this solution is unique in the class of candidate solutions in
$\bigcup_{0<\gamma<1/2, \thinspace A >0} \mathcal{S}(\gamma, A, t_0)$.  
\end{thm}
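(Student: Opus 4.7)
The plan is to use Girsanov's theorem to strip the drift $-\alpha\ell$ from $X^\ell$, reducing $\Gamma[\ell]'_t$ to the hitting-time density of a driftless Brownian motion started from $X_0\sim\nu_0$. The leading $t^{-(1-\beta)/2}$ short-time behaviour of this clean density, read off from the $O(x^\beta)$ decay of $V_0$ at the origin, identifies $K$; the Radon--Nikodym correction is kept small by the bound $\int_0^t(\ell'_s)^2 ds\leq A^2 t^\beta/\beta$, which vanishes as $t\downarrow 0$. Combined with the $L^\infty$-contraction of Theorem~\ref{Intro_Thm_Minimality}, Banach's fixed-point theorem yields the unique fixed point, and a sandwich argument from the minimality part of the same theorem delivers the broader uniqueness.

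\textbf{Step 1 (Girsanov and clean density).} For $\ell\in\mathcal{S}(\tfrac{1-\beta}{2},A,t_0)$, the bound on $\ell'$ verifies Novikov's criterion, so $Z_t=\exp(\alpha\int_0^t\ell'_s dB_s-\tfrac{\alpha^2}{2}\int_0^t(\ell'_s)^2 ds)$ is a true $\P$-martingale. Under $\tilde\P$ with $d\tilde\P/d\P|_{\mathcal{F}_{t_0}}=Z_{t_0}$, the process $\tilde B_s:=B_s-\alpha\ell_s$ is a Brownian motion, so $X^\ell_s=X_0+\tilde B_s$ and $\tau^\ell$ is a standard first-passage time with density
\[
\tilde q(t) = \int_0^\infty \frac{x}{\sqrt{2\pi t^3}}\, e^{-x^2/(2t)}\, V_0(x)\, dx.
\]
Splitting the integral at $x_\star$ and substituting $u=x/\sqrt t$ in the interior piece gives $\tilde q(t)\leq(K+o(1))\,t^{-(1-\beta)/2}$ as $t\downarrow 0$, with $K=K(\beta,C,x_\star)$; the exterior piece from $V_0\leq D\1_{x>x_\star}$ decays exponentially in $1/t$ and is absorbed into the $o(1)$.

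\textbf{Step 2 (Radon--Nikodym transfer).} With $M:=Z^{-1}$ a positive $\tilde\P$-martingale, optional stopping of $M$ at $\tau^\ell\wedge t$ yields $\Gamma[\ell]_t=\tilde\E[M_{\tau^\ell}\1_{\tau^\ell\leq t}]$, hence
\[
\Gamma[\ell]'_t = \tilde\E[M_t\mid\tau^\ell=t]\,\tilde q(t).
\]
The unconditional $L^p$-moments $\tilde\E[M_t^p]=\exp(\tfrac{p(p-1)\alpha^2}{2}\int_0^t(\ell'_s)^2 ds)$ tend to $1$ as $t\downarrow 0$. To pass this control to the conditional expectation, one uses the explicit law of $\tilde B$ given $\tau^\ell=t$---conditional on $X_0=x$ this is a Brownian motion conditioned to first hit $-x$ at $t$, i.e., a time-reversed Bessel-$3$ bridge---together with the smallness of $\|\ell'\|_{L^2[0,t_0]}^2\leq A^2 t_0^\beta/\beta$, to conclude $\tilde\E[M_t\mid\tau^\ell=t]\leq 1+\e/K$ for $t_0$ small enough, uniformly over $\ell\in\mathcal{S}(\tfrac{1-\beta}{2},K+\e,t_0)$. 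Combining with Step~1, $\Gamma[\ell]'_t\leq(K+\e)t^{-(1-\beta)/2}$, so $\Gamma$ stabilises $\mathcal{S}(\tfrac{1-\beta}{2},K+\e,t_0)$.

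\textbf{Step 3 (Fixed point and broader uniqueness).} The set $\mathcal{S}(\tfrac{1-\beta}{2},K+\e,t_0)$ is convex and $L^\infty$-complete, since the pointwise constraint on $\ell'$ survives weak $H^1$-limits of $L^\infty$-Cauchy sequences bounded in $H^1$. Theorem~\ref{Intro_Thm_Minimality} supplies the $L^\infty$-contraction on a possibly shorter interval, and Banach's theorem yields the unique fixed point $L$. For uniqueness in $\bigcup_{\gamma,A}\mathcal{S}(\gamma,A,t_0)$, any other candidate $\bar L\in\mathcal{S}(\gamma,A,t_0)$ is continuous; applying the minimality part of Theorem~\ref{Intro_Thm_Minimality} first with $L$ as the differentiable base and $\bar L$ as the c\`adl\`ag competitor, then with the roles exchanged, sandwiches $L=\bar L$. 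The main obstacle is the conditional bound in Step~2: unconditional $L^p$-control is insufficient because $\{\tau^\ell=t\}$ is a rare, path-dependent event that can skew $M_t$ upwards, which is precisely where the Bessel-bridge representation of $\tilde B$ given its first-passage time becomes essential.
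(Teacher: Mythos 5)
Your route differs structurally from the paper's: you apply Girsanov directly to the hitting probability and then condition on $\{\tau^\ell=t\}$, whereas the paper first derives (via It\^o's formula and the backward Dirichlet heat equation, Lemma \ref{Fixed_Lem_StartingPoint}) the representation $\Gamma[\ell]_t = 1-\int\int G_t\,dx\,\nu_0(dx_0)+2\alpha\int_0^t\E[p_{t-s}(X_s)\1_{s<\tau}]\,d\ell_s$ and then estimates difference quotients; there Girsanov is used only for \emph{unconditional} moment bounds of $\E[F(X_s)\1_{s<\tau}]$ via H\"older (Lemma \ref{Fixed_Lem_Girsanov}, Corollary \ref{Fixed_Cor_MainIntegralCaculation}), which suffices because the feedback terms $I_2,I_3$ carry an extra factor of $\delta$ and a prefactor that is made smaller than $\e$ by shrinking $t_0$. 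Your Step 1 correctly reproduces the paper's $I_1$ computation and identifies the same constant $K$, and your Step 3 is essentially the paper's fixed-point and uniqueness argument.

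The genuine gap is the conditional estimate in Step 2: the claim $\tilde\E[M_t\mid\tau^\ell=t]\leq 1+\e/K$, uniformly over $\ell\in\mathcal{S}(\tfrac{1-\beta}{2},K+\e,t_0)$ and over $t\le t_0$, is asserted but not proved, and it is the entire analytic content of your approach. As you yourself note, unconditional $L^p$-control of $M$ cannot deliver it (H\"older against $\tilde\P(\tau^\ell\in(t,t+\delta))$ only yields $O(\delta^{1/q})$, not $O(\delta)$). Writing it out via the $h$-transform, conditionally on $\{\tau^\ell=t,\,X_0=x\}$ one has $d\tilde B_s=dW_s+\bigl(Y_s^{-1}-Y_s/(t-s)\bigr)ds$ with $Y$ the conditioned path, so
\[
M_t=\exp\Bigl(-\alpha{\textstyle\int_0^t}\ell'_s\,dW_s-\alpha{\textstyle\int_0^t}\ell'_s\bigl(Y_s^{-1}-Y_s/(t-s)\bigr)ds-\tfrac{\alpha^2}{2}\Vert\ell'\Vert^2_{L^2(0,t)}\Bigr),
\]
and the bound requires exponential moments of the singular additive functional $\int_0^t \ell'_s\,Y_s(t-s)^{-1}ds$ under the bridge law, with explicit dependence on the starting point $x$ (for fixed $x$ this functional is of order $x\,t^{-\gamma}$, which does \emph{not} vanish as $t\downarrow0$; one must integrate against the conditional law of $X_0$ given $\tau^\ell=t$, which concentrates at $x=O(\sqrt t)$, to rescue the estimate). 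None of this is routine, and it is at least as much work as the paper's Lemmas \ref{Fixed_Lem_Girsanov}--\ref{Fixed_Lem_I3}. Until that estimate is carried out, the stability of $\Gamma$ on $\mathcal{S}(\tfrac{1-\beta}{2},K+\e,t_0)$ --- and hence the whole theorem --- is not established. A secondary, easily repaired slip: multiplying $\tilde q(t)\le(K+o(1))t^{-(1-\beta)/2}$ by $1+\e/K$ gives the constant $K+\e+o(1)$, which exceeds $K+\e$; you need $1+\e/(2K)$ and $t_0$ small enough that the $o(1)$ is absorbed.
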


This short-time fixed point construction can be extended by observing that if $L$ is in the space $H^1(0,t_0)$, then, by appealing to Girsanov's Theorem, we can show that the density $V_{t_0-}$ must have decay of order $O(x^\beta)$ near the Dirichlet boundary. In other words, we can recover the regularity of the initial condition at time $t_0$, and so exactly the same argument can be applied to the system restarted from time $t_0$. Consequently, we can extend the solution constructed in Theorem \ref{Intro_Thm_FixedPoint} onto another small non-zero time interval and iterate this procedure, so long as the solution we obtain is in $H^1$. The catch that prevents this procedure from giving a differentiable solution for all times (recall that this would contradict Theorem \ref{Intro_Thm_BlowUP}) is that we lose control of the constants in the relevant $\mathcal{S}$ subsets. Hence the construction only applies up to the first time the $H^1$-norm of the solution explodes. Trivially, this occurs no later than the first jump, however, the mathematical challenge in attempting to restart the problem after an explosion time seems no less difficult than Conjecture \ref{Into_Conj_UniquenessNonVanish}.

%% THM: Fixed point
\begin{thm}[Uniqueness up to explosion]
\label{Intro_Thm_MainUniqueness}
There exists a solution $L$ to (\ref{eq:Intro_MVproblem}) up to time
\[
t_\mathrm{explode} 
:= \sup\{
	t > 0: \Vert L \Vert_{H^1(0,t)} < \infty
\} \in (0,\infty]
\]
such that for every $t_0 < t_\mathrm{explode} $ we have $L \in \mathcal{S}(\tfrac{1-\beta}{2}, K, t_0)$ for some $K > 0$. Furthermore $L$ is the unique solution on $[0,t_\mathrm{explode})$: If $\bar{L}$ is any other generic c\`{a}dl\`{a}g solution to (\ref{eq:Intro_MVproblem}) satisfying \eqref{eq:Intro_PhysicalJumpCondition} on $[0,t]$ for some $t < t_\mathrm{explode}$, then $L_s = \bar{L}_s$ for all $s \in [0,t]$. 
\end{thm}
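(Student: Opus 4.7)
The strategy is to iterate Theorem \ref{Intro_Thm_FixedPoint} to build a solution up to a maximal time $t_\mathrm{explode}$, and then to combine the minimality statement of Theorem \ref{Intro_Thm_Minimality} with a trapping argument to obtain uniqueness in the full c\`{a}dl\`{a}g class.

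For existence, Theorem \ref{Intro_Thm_FixedPoint} produces a solution $L$ on an initial interval $[0,t_{1}]$ with $L\in\mathcal{S}(\tfrac{1-\beta}{2},K_{0}+\varepsilon,t_{1})$. The key to extending past $t_{1}$ is to verify that the restart law $\nu_{t_{1}}$ admits a bounded density $V_{t_{1}}$ satisfying a power-law bound of the type (\ref{eq:Intro_DensityAssumption}). I would use Girsanov's theorem: since $L\in H^{1}(0,t_{1})$, its derivative lies in $L^{2}(0,t_{1})$, Novikov's condition holds, and under an equivalent change of measure $X^{L}$ becomes a Brownian motion started from $\nu_{0}$ and absorbed at the origin. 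The density of that killed Brownian motion at any time $t_{1}>0$ is given by the method of images; it is bounded and vanishes linearly at the boundary. Transferring this estimate back via H\"{o}lder's inequality applied to the Radon--Nikodym derivative recovers a bound of the form $V_{t_{1}}(x)\leq C_{1}x^{\beta}+D_{1}$, with constants depending on $\|L\|_{H^{1}(0,t_{1})}$. Theorem \ref{Intro_Thm_FixedPoint} can then be reapplied starting from $\nu_{t_{1}}$, extending $L$, and iterating this procedure yields a solution on all of $[0,t_\mathrm{explode})$ with $L\in\mathcal{S}(\tfrac{1-\beta}{2},K,t_{0})$ on each compact sub-interval $[0,t_{0}]$. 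The definition of $t_\mathrm{explode}$ is forced by the method: as $\|L\|_{H^{1}(0,t)}$ diverges, the constants in the Girsanov/H\"{o}lder estimate degenerate and the iteration scheme breaks down, consistent with the absence of global solvability noted after Theorem \ref{Intro_Thm_BlowUP}.

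For uniqueness, let $\bar{L}$ be another c\`{a}dl\`{a}g solution on $[0,t]$ with $t<t_\mathrm{explode}$. The minimality part of Theorem \ref{Intro_Thm_Minimality} already yields $\bar{L}_{s}\geq L_{s}$ on $[0,t]$. The reverse inequality comes from a trapping argument: since $V_{0}$ satisfies (\ref{eq:Intro_DensityAssumption}) with $\beta<1$, the physical jump condition (\ref{eq:Intro_PhysicalJumpCondition}) forbids a jump at the origin, so $\bar{L}_{0^{+}}=0$. Coupling $X^{L}$ and $X^{\bar{L}}$ to a common Brownian motion and initial condition and invoking the one-sided comparison of Proposition \ref{Unique_Prop_Comparison}, the positive part $(\bar{L}_{s}-L_{s})^{+}$ can be controlled purely in terms of the regularity of $L$. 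Since $L_{s}\lesssim s^{(1+\beta)/2}$ on a short interval, this should force $\bar{L}_{s}\leq A s^{(1+\beta)/2}$ on some $[0,t_{1}']$, hence $\bar{L}\in\mathcal{S}(\tfrac{1-\beta}{2},A,t_{1}')$ after possibly shrinking $t_{1}'$. The $L^{\infty}$-contractivity of $\Gamma$ on that set then gives $\bar{L}=L$ on $[0,t_{1}']$, and bootstrapping interval-by-interval --- using the density estimate from the existence step to re-supply (\ref{eq:Intro_DensityAssumption}) at each restart --- propagates the equality up to $t$.

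The main obstacle is the trapping step: producing a quantitative upper bound on $\bar{L}$ near zero from only the c\`{a}dl\`{a}g hypothesis. This forces Proposition \ref{Unique_Prop_Comparison} to be used in a strongly asymmetric form, bounding $(\bar{L}-L)^{+}$ with no a priori regularity on $\bar{L}$, and it is here that the novel trapping technique advertised in the introduction should do the essential work. A secondary difficulty is ensuring that the constants in the Girsanov/H\"{o}lder transfer degrade only as $\|L\|_{H^{1}}\uparrow\infty$ and not earlier, so that the iteration really does reach the sharp time $t_\mathrm{explode}$.
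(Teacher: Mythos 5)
Your high-level structure (bootstrap existence, then trap a generic solution between known solutions) matches the paper, but both halves contain genuine gaps where the actual work happens.

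\textbf{Existence step.} You propose to recover the boundary decay of $V_{t_1}$ by a direct Girsanov change of measure on the forward process $X$, turning it into killed Brownian motion, and then transferring the Brownian density bound back by H\"older. This does not give a \emph{pointwise} bound on $V_{t_1}$: H\"older applied to $\nu_{t_1}(A)=\mathbb{E}^{\mathbb{Q}}[Z_{t_1}^{-1}\mathbf{1}_{X_{t_1}\in A}\mathbf{1}_{t_1<\tau}]$ yields $\int_{A}V_{t_1}\,dx\leq C\bigl(\int_{A}(G_{t_1}\nu_0)(x)\,dx\bigr)^{1/q}$, and the exponents of $|A|$ on the two sides disagree, so no density estimate survives the limit $|A|\to0$. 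The paper's Lemma \ref{Boot_Lem_IC_recovery} sidesteps this by first invoking Hunt's switching identity, which re-expresses $V_{t_0-}(x)$ as a genuine \emph{expectation} of the time-reversed (dual) process started from the point $x$, namely $V_{t}(x)=\mathbb{E}_{\widehat{X}_{0}=x}[V_{0}(\widehat{X}_{t})\mathbf{1}_{t<\widehat{\tau}}]$. Only then can Girsanov and H\"older (plus Jensen and the optional stopping identity $\mathbb{E}_{x}[W_{t}\mathbf{1}_{t<\tau^{W}}]=x$) be applied, and they give decay of exactly order $x^{\beta}$ rather than the linear decay you claim. Without the dual-process representation the bootstrapping cannot be made rigorous.

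\textbf{Uniqueness step.} You correctly obtain $\bar{L}\geq L$ from minimality, but the claimed reverse bound misreads Proposition \ref{Unique_Prop_Comparison}. That proposition controls $(\Gamma[\ell]_{t}-\Gamma[\bar{\ell}]_{t})^{+}$ by an integral against $d\Gamma[\ell]_{s}$ — i.e.\ against the derivative of the \emph{dominating} function $\ell$. To bound $(\bar{L}-L)^{+}$ one must take $\ell=\bar{L}$, and the resulting integral is $\int\cdots\,d\bar{L}_{s}$, about which nothing is known a priori for a generic c\`{a}dl\`{a}g solution. So the proposition cannot be ``used in a strongly asymmetric form'' to bound $(\bar{L}-L)^{+}$ from $L$'s regularity alone; the asymmetry runs the other way. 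You acknowledge this is where the paper's novel technique must enter, but the actual device is absent from your proposal: the paper introduces the $\varepsilon$-deleted solutions $L^{\varepsilon}$ (Definition \ref{Boot_Def_DeletedSolution}) obtained by erasing the initial mass on $(0,\varepsilon)$ and shifting it to an initial loss. Lemma \ref{Boot_Lem_Monotonicity} then shows $L^{\varepsilon}>\bar{L}$ pathwise by a strict separation argument (before the first crossing, $X-X^{\varepsilon}\geq\varepsilon/4$, and the absolute-continuity of the running infimum under an $H^{1}$ perturbation gives a non-degenerate extra probability), while Lemma \ref{Boot_Lem_Convergence} shows $L^{\varepsilon}\to L$ in $L^{\infty}$ on a short interval using the same contraction estimate as Theorem \ref{Intro_Thm_Minimality}. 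Squeezing $L\leq\bar{L}<L^{\varepsilon}$ as $\varepsilon\to0$ then forces equality. Your proposed alternative — forcing $\bar{L}\in\mathcal{S}(\cdot)$ and then applying contraction — would in any case require a derivative bound on $\bar{L}$, not merely an $L^{\infty}$ bound, so it also conflates two distinct types of control.
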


 The analogous result for the McKean--Vlasov problem corresponding to (\ref{eq:Intro_FiniteSystem_Def}) is presented in Section \ref{Sect_GeneralCoeff}. The first step towards uniqueness is given by Theorem \ref{Intro_Thm_Minimality}: The differentiable solution constructed in the first part of Theorem \ref{Intro_Thm_MainUniqueness} is minimal, so it must be a lower bound of any generic candidate c\`{a}dl\`{a}g solution, at least in small time. To obtain an upper bound, we introduce a family of processes in which we kill a small proportion, $\varepsilon > 0$, of the initial condition at time zero. In Section \ref{Sect_Bootstrap} we show that these modified solutions cannot overlap and so they bound the generic solutions from above. By returning to the contraction argument in Theorem \ref{Intro_Thm_Minimality}, we show that these modified solutions converge to the differentiable solution as the amount of deleted mass, $\varepsilon$, tends to zero. Thus, the envelope of solutions shrinks to zero size and this forces uniqueness of solutions. The power of this method is that it circumvents the need to have quantitative information about the generic candidate c\`{a}dl\`{a}g solutions.

%% REMARK: Propagation of chaos
\begin{rem}[Propagation of chaos]
Theorem \ref{Intro_Thm_MainUniqueness} resolves ambiguity about the validity of propagation of chaos. By following the methods in \cite{dirt_SPA_2015}, it can be shown that the particle system in (\ref{eq:Intro_FiniteSystem_Def}) has limit points for convergence in law (with respect to a suitable topology) to a c\`{a}dl\`{a}g solution of (\ref{eq:Intro_MVproblem}) satisfying \eqref{eq:Intro_PhysicalJumpCondition}. Now that we have uniqueness amongst general c\`{a}dl\`{a}g solutions, we can conclude that this is in fact full convergence, up to the first explosion time, and not just subsequential convergence.

\end{rem}

\subsubsection*{Overview of the paper}

In Section \ref{Sect_MinimalJmmps} we motivate and explain the physical  condition (\ref{eq:Intro_PhysicalJumpCondition}) and prove that it is necessary in order to have a c\`{a}dl\`{a}g solution. In Section \ref{Sect_Unique} we prove Theorem \ref{Intro_Thm_Minimality} via a comparison argument. In Section \ref{Sect_FixedPoint} we prove Theorem \ref{Intro_Thm_FixedPoint} by finding a space on which the map $\Gamma$ stabilises. In Section \ref{Sect_Bootstrap} we prove Theorem \ref{Intro_Thm_MainUniqueness} by extending the fixed point argument up to the explosion time and introducing the $\varepsilon$-deleted solutions used to bound candidate solutions. In Section \ref{Sect_GeneralCoeff} we show how our arguments can be extended to incorporate more general drift and diffusion coefficients.

%%%%%%%%%%%%%%%%%%%%%%%%%%%%%%%%%%%%%%%%%%%%%%%%%%%%%
%%%%%%%%%%%%%%%%%%%%%%%%%%%%%%%%%%%%%%%%%%%%%%%%%%%%%
%%%%%%%%%%%%     2. MINIMAL JUMPS         %%%%%%%%%%%
%%%%%%%%%%%%%%%%%%%%%%%%%%%%%%%%%%%%%%%%%%%%%%%%%%%%%
%%%%%%%%%%%%%%%%%%%%%%%%%%%%%%%%%%%%%%%%%%%%%%%%%%%%%
\section{Minimal jumps and $\alpha$-fragility --- proof of Proposition \ref{Intro_Prop_MinimalJumps}}
\label{Sect_MinimalJmmps}
Our aims in this section are twofold: Firstly, we prove that the physical jump condition in (\ref{eq:Intro_PhysicalJumpCondition}) yields the solution with the smallest possible jump size (at any given instance) and, secondly, we provide some intuition for this behaviour. 

We begin by observing that regardless of the initial condition, at any time $t>0$, the measure $\nu_t$ will have a density. 

%% PROP: Density
\begin{prop}[Existence of a density process]
\label{MinimalJumps_Prop_DensityExists}
	Let $(\nu_t)_{t \geq 0}$ be the law of a solution, $X$, to (\ref{eq:Intro_MVproblem}). For all $t > 0$, $\nu_t$ is absolutely continuous with respect to the Lebesgue measure and therefore has a density function, $V_t : (0, \infty) \to (0, \infty)$. Moreover, if $\nu_0$ has a density $V_{0}\in L^{\infty}$, then $\left\Vert V_{t}\right\Vert _{\infty}\leq\left\Vert V_{0}\right\Vert _{\infty}$.
\end{prop}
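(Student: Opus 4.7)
The plan is to exploit the fact that the loss process $L$ is deterministic, so that the dynamics of $X$ prior to killing are simply those of a Brownian motion translated by a deterministic (c\`adl\`ag) function. Indeed, according to the SDE in (\ref{eq:Intro_MVproblem}), we have the pathwise identity $X_{t}=X_{0}+B_{t}-\alpha L_{t}$ for all $t\geq 0$, so the unrestricted law $\mu_{t}:=\mathrm{Law}(X_{t})$ is the convolution of $\nu_{0}$ with the normal distribution of mean $-\alpha L_{t}$ and variance $t$. In particular, for each fixed $t>0$, $\mu_{t}$ admits the density
\[
f_{t}(y)=\int_{0}^{\infty}p_{t}(y-x+\alpha L_{t})\,\nu_{0}(dx),\qquad p_{t}(z):=\frac{1}{\sqrt{2\pi t}}e^{-z^{2}/(2t)}.
\]

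Next I would use $f_{t}$ to dominate $\nu_{t}$. For any Borel set $A\subseteq(0,\infty)$,
\[
\nu_{t}(A)=\mathbb{E}\big[\mathbf{1}_{A}(X_{t})\mathbf{1}_{t<\tau}\big]\leq\mathbb{E}\big[\mathbf{1}_{A}(X_{t})\big]=\mu_{t}(A)=\int_{A}f_{t}(y)\,dy,
\]
so $\nu_{t}$ is absolutely continuous with density $V_{t}\leq f_{t}$ almost everywhere, which proves the first claim. For the supremum bound, assume that $\nu_{0}$ has a bounded density $V_{0}\in L^{\infty}$; then, viewing $p_{t}(y-x+\alpha L_{t})$ as a Gaussian density in the $x$ variable,
\[
f_{t}(y)=\int_{0}^{\infty}V_{0}(x)p_{t}(y-x+\alpha L_{t})\,dx\leq\Vert V_{0}\Vert_{\infty}\int_{\mathbb{R}}p_{t}(y-x+\alpha L_{t})\,dx=\Vert V_{0}\Vert_{\infty},
\]
and consequently $\Vert V_{t}\Vert_{\infty}\leq\Vert f_{t}\Vert_{\infty}\leq\Vert V_{0}\Vert_{\infty}$, as required.

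There is no substantive obstacle here; the whole argument hinges on the observation that the determinism of $L$ reduces the pre-killing marginal law to a Gaussian perturbation of $\nu_{0}$, for which absolute continuity and the uniform convolution bound are immediate from the standard properties of the heat kernel. The only point meriting mild care is that $L$ may be discontinuous in $t$, but since $L_{t}$ is nevertheless a deterministic real number at each fixed $t$, this plays no role in the Gaussian character of the fixed-time marginal of $X_{t}-\alpha L_{t}$, and the argument above goes through verbatim.
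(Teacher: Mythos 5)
Your argument is correct and is essentially the paper's own proof: both drop the killing indicator to dominate $\nu_t$ by the law of $X_0+B_t-\alpha L_t$, which is a Gaussian convolution of $\nu_0$ since $L$ is deterministic, and both then obtain the $L^\infty$ bound from $\int_{\mathbb{R}}p_t(y)\,dy=1$. The only cosmetic difference is that the paper deduces absolute continuity via the bound $\nu_t(S)\leq(2\pi t)^{-1/2}\mathrm{Leb}(S)$ and the Radon--Nikodym theorem, whereas you exhibit the dominating density $f_t$ explicitly.
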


\begin{proof}
By omitting the killing at zero, we have the upper bound
\[
\nu_t(S) 
	\leq \mathbb{P}(X_0 + B_t -\alpha L_t \in S)
	= \int_S \int^\infty_0 p_t(x_0 + x - \alpha L_t) \nu_0(dx_0) dx,
\]
where $p_t$ is the Brownian transition kernel. Hence $\nu_{t}(S)\leq(2\pi t)^{-1/2}\text{Leb}(S)$, so the first claim follows from the Radon--Nikodym Theorem. If $\nu_0$ has a density $V_0$ in $L^{\infty}$, then we get
\[
V_{t}(x)\leq\int_{0}^{\infty}p_{t}(x_{0}+x-\alpha L_{t})V_{0}(x_{0})dx_{0}\leq\left\Vert V_{0}\right\Vert _{\infty},
\]
since $\int_{\mathbb{R}}p_{t}(y)dy=1$. This proves the second claim.
\end{proof}

As remarked in Section \ref{Sect_Intro}, equation (\ref{eq:Intro_JumpCondition}) alone is insufficient to determine the jump size at a given time. Not only is zero always a solution of that equation, but there may indeed be many other solutions that differ from the physical solution given by  (\ref{eq:Intro_PhysicalJumpCondition}). 

%% EXAMPLE: Physical solutions
\begin{example}
\label{MinimalJumps_Ex_SomeV0}
Suppose we have an initial law $\nu_0$ with density
\[
V_0(x) = \alpha^{-1} \mathbf{1}_{0 < x < \alpha} + 2\alpha^{-1}\mathbf{1}_{2 \alpha < x < 3 \alpha}.
\]
Then any jump size $\Delta L_0 \in [0,1] \cup \{ 3 \}$ solves (\ref{eq:Intro_JumpCondition}), however, the physical solution is given by the condition $\Delta L_0 = 1$, see Figure \ref{fig:FixedPoint_MinJumpExample}.
\end{example}

%% FIG: figure for the example above
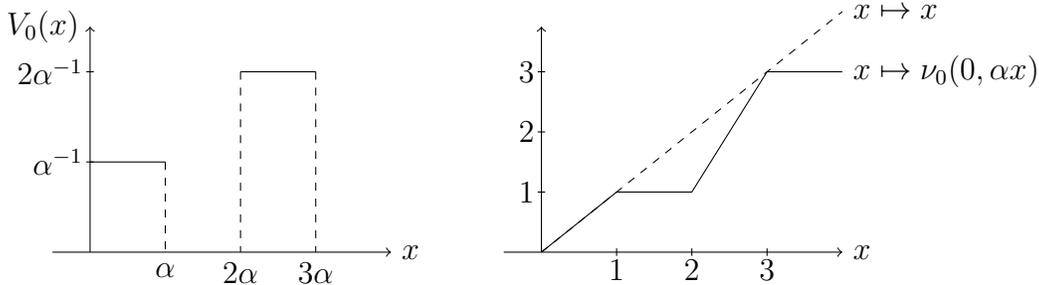
\begin{figure}
\begin{center}
\begin{tikzpicture}
\draw[->] (-0.5,0) -- (4,0) node [right] {$x$};
\draw[->] (0,0) -- (0,3) node [left] {$V_{0}(x)$};
\draw[-] (0,1.2) -- (1,1.2);
\draw[] (-0.05,1.2)--(0.05,1.2) node [left] {$\alpha^{-1}$};
\draw[-, dashed] (1,1.2) -- (1, 0) node [below] {$\alpha$};
\draw[-, dashed] (2,2.4) -- (2,0) node [below] {$2\alpha$};
\draw[-] (2,2.4) -- (3,2.4);
\draw[-, dashed] (3,2.4) -- (3,0) node [below] {$3\alpha$};
\draw[] (-0.05,2.4)--(0.05,2.4) node [left] {$2 \alpha^{-1}$};

\draw[->] (-0.5+6,0) -- (4+6,0) node [right] {$x$};
\draw[->] (0+6,0) -- (0+6,3) ;
\draw[-] (0+6,0) -- (1+6,0.8*1) -- (2+6,0.8*1) -- (3+6,0.8*3) -- (4+6,0.8*3) node[right] {$x \mapsto \nu_0(0,\alpha x)$};
\draw[-, dashed] (0+6,0) -- (3+6,0.8*3) -- (4+6, 0.8*4) node[right] {$x \mapsto x$};

\draw[] (1+6,-0.05)--(1+6,0.05) node [below] {$1$};
\draw[] (2+6,-0.05)--(2+6,0.05) node [below] {$2 $};
\draw[] (3+6,-0.05)--(3+6,0.05) node [below] {$3 $};
\draw[] (0+6-0.05,0.8)--(0+6+0.05,0.8) node[left] {$1$};
\draw[] (0+6-0.05,0.8*2)--(0+6+0.05,0.8*2) node[left] {$2$};
\draw[] (0+6-0.05,0.8*3)--(0+6+0.05,0.8*3) node[left] {$3$};

\end{tikzpicture}
\caption{\label{fig:FixedPoint_MinJumpExample} The function from Example \ref{MinimalJumps_Ex_SomeV0} is on the left. The candidate jumps --- that is, solutions to (\ref{eq:Intro_JumpCondition}) --- are the points on the right where the graphs intersect. The point $(1 ,1)$ gives the minimal allowed jump size of $1$, since $x = 1$ is the value given by (\ref{eq:Intro_PhysicalJumpCondition}). }
\end{center} \vspace{-10pt}
\end{figure}

An alternative way to view (\ref{eq:Intro_PhysicalJumpCondition}), which better motivates Proposition \ref{Intro_Prop_MinimalJumps}, is the notion of \emph{fragility}. To state a proper definition, consider first the sequence 
\begin{align}
\label{eq:MinimalJumps_FragileSequence}
f_0(\mu,\alpha,\varepsilon) &:= \mu(0,\alpha\varepsilon)\nonumber \\ 
f_{n+1}(\mu,\alpha,\varepsilon) &:= \mu(0,\alpha\varepsilon + \alpha f_n (\mu,\alpha,\varepsilon)), 	\qquad \textrm{for }n \geq 0,
\end{align}
for a given measure $\mu$ and constants $\varepsilon > 0$ and $\alpha > 0$. Notice that $f_{n}(\mu,\alpha,\varepsilon)$ is increasing in $n$ and in $\varepsilon$. Hence we can deduce that the limit $f_\infty(\mu,\alpha,\varepsilon)$ exists and is an increasing function of $\varepsilon$. Therefore, we can sensibly define:

%% DEF: alpha-fragile
\begin{defn}[$\alpha$-fragility]
The measure, $\mu$, is said to be \emph{$\alpha$-fragile} if 
\[
\lim_{\varepsilon \to 0}f_\infty(\mu,\alpha,\varepsilon) \neq 0.
\]
\end{defn}

To see why $\alpha$-fragility is related to physical solutions, consider starting the heat equation from an initial law $\nu_0$. In small time, we will immediately lose mass at the boundary, say an $\varepsilon$ amount. To approximate the contagious system in (\ref{eq:Intro_MVproblem}), we must then shift the measure down towards the origin by an amount $\alpha \varepsilon$. If we apply this to $\nu_0$, we obtain a loss of $f_0(\nu_0, \varepsilon, \alpha)$, which we can then further shift our initial condition by, and so on, hence obtaining 
%$f_\infty(\mu,\alpha,\varepsilon)$ 
$f_\infty(\nu_0,\alpha,\varepsilon)$ 
in the limit. If this terminal quantity does not shrink to zero with $\varepsilon$, we should not expect the solution to (\ref{eq:Intro_MVproblem}) to be right-continuous at $t = 0$: An infinitesimal loss of mass starts a cascade of losses summing to a non-zero amount. Indeed, this is a heuristic version of the argument presented in the proof of Proposition \ref{Intro_Prop_MinimalJumps} below. For now, we notice that there is, in fact, an exact correspondence between $f_\infty(\mu,\alpha,0+)$ and the physical jump condition.

%% PROP: Correspondence
\begin{prop}
For any atomless measure $\mu$ on the positive half-line and $\alpha > 0$ we have
\[
\lim_{\varepsilon \to 0} f_\infty(\mu, \alpha, \varepsilon) 
	= \inf\{ x \geq 0 : \mu(0,\alpha x)  < x\}.
\]
\end{prop}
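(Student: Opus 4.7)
My plan is to recast the recursion as fixed-point iteration. Set $g(x) := \mu(0,\alpha x)$, so that $g$ is non-decreasing and, by the atomless hypothesis on $\mu$, continuous with $g(0)=0$. The recursion becomes $f_0 = g(\varepsilon)$ and $f_{n+1} = h_\varepsilon(f_n)$ for the continuous non-decreasing map $h_\varepsilon(y) := g(\varepsilon+y)$. Writing $x^\star := \inf\{x \geq 0 : g(x) < x\}$, continuity of $g$ yields $g(x^\star) = x^\star$ together with $g(x) \geq x$ for every $x < x^\star$.

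I would first observe that $f_\infty(\mu,\alpha,\varepsilon)$ is the smallest non-negative fixed point of $h_\varepsilon$. The iterates are increasing and bounded by $\mu(0,\infty) < \infty$, so the limit exists and satisfies $f_\infty = h_\varepsilon(f_\infty)$ by continuity. Conversely, any fixed point $\tilde y \geq 0$ satisfies $\tilde y = g(\varepsilon+\tilde y) \geq g(\varepsilon) = f_0$ by monotonicity of $g$, and inductively $\tilde y \geq f_n$, hence $\tilde y \geq f_\infty$.

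Next comes the lower bound. If $y \geq 0$ satisfies $y + \varepsilon < x^\star$, then $h_\varepsilon(y) = g(\varepsilon+y) \geq \varepsilon + y > y$, so $y$ cannot be a fixed point of $h_\varepsilon$. Therefore $f_\infty + \varepsilon \geq x^\star$, giving $\liminf_{\varepsilon \to 0} f_\infty \geq x^\star$. For the matching upper bound, fix $\delta > 0$ and use the definition of $x^\star$ to pick $x_0 \in (x^\star, x^\star + \delta)$ with $\eta := x_0 - g(x_0) > 0$. For $\varepsilon < \eta$, the continuous function $\phi(y) := y - h_\varepsilon(y)$ satisfies $\phi(0) = -g(\varepsilon) \leq 0$ and $\phi(x_0-\varepsilon) = \eta - \varepsilon > 0$, so the intermediate value theorem produces a fixed point of $h_\varepsilon$ in $[0, x_0-\varepsilon]$. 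Minimality of $f_\infty$ then forces $f_\infty \leq x_0 - \varepsilon < x^\star + \delta$. Sending $\varepsilon \to 0$ and then $\delta \to 0$ closes the argument.

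The step I expect to require the most care is the minimality claim, which is what allows an existence argument via the intermediate value theorem to control the specific limit $f_\infty$ produced by the iteration. The atomless hypothesis on $\mu$ is used crucially, both to guarantee continuity of $h_\varepsilon$ for the intermediate value argument and to pin down the relation $g(x) \geq x$ on $[0,x^\star)$; without it, jumps of $g$ at $x^\star$ could break the precise identification of the limit.
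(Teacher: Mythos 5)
Your proof is correct, and its overall strategy coincides with the paper's: both identify the monotone limit $f_\infty(\mu,\alpha,\varepsilon)$ through the fixed-point identity $f_\infty=g(\varepsilon+f_\infty)$ and then compare with $x^\star$ from both sides, the lower bound coming from $g(x)\geq x$ on $[0,x^\star)$ and the upper bound from points $x_0$ just above $x^\star$ with $g(x_0)<x_0$. The one tactical difference is in the upper bound: the paper runs the induction directly against the barrier $x_\delta-\varepsilon$, showing every iterate $f_n$ stays below it, whereas you first establish that $f_\infty$ is the \emph{minimal} non-negative fixed point of $h_\varepsilon$ and then manufacture a competing fixed point in $[0,x_0-\varepsilon]$ via the intermediate value theorem. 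Both are sound; your route isolates a clean structural fact (minimality among fixed points), while the paper's shows the IVT detour is unnecessary since a super-fixed-point barrier suffices. One small attribution slip: the inequality $g(x)\geq x$ for $x<x^\star$ follows from the definition of the infimum alone, not from continuity --- atomlessness is needed only for $g(x^\star)=x^\star$, for passing to the limit in the recursion, and (in your version) for the IVT.
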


\begin{proof}
Write $f = \lim_{\varepsilon \to 0} f_\infty(\mu, \alpha, \varepsilon)$ and $x_0 = \inf\{ x \geq 0 : \mu(0,\alpha x)  < x\}$. 
 
Suppose $f < x_0$. Then we can find $\varepsilon > 0$ such that $f_\infty(\mu, \alpha, \varepsilon) + \varepsilon < x_0$. By taking the limit as $n \to \infty$ in (\ref{eq:MinimalJumps_FragileSequence}) we get
\[
f_\infty(\mu, \alpha, \varepsilon) 
	= \mu(0, \alpha \varepsilon + \alpha f_\infty(\mu, \alpha, \varepsilon) )
	\geq \varepsilon + f_\infty(\mu, \alpha, \varepsilon),
\]
where the inequality is due to the definition of $x_0$, but this is clearly a contradiction. 

Suppose $f > x_0$. By definition of $x_0$, we can find a sequence $(x_\delta) \downarrow x_0$ as $\delta \downarrow 0$ such that
\[
\mu(0,\alpha x_\delta ) \leq x_\delta - \delta.
\]
Fix any such $\delta > 0$ and take $\varepsilon\in (0, x_\delta \wedge \delta)$, then we have
\[
f_0(\mu,\alpha,\varepsilon) 
	= \mu(0,\alpha \varepsilon) 
	\leq \mu(0,\alpha x_\delta) 
	\leq x_\delta - \delta
	\leq x_\delta - \varepsilon.
\]
Since $f_0(\mu,\alpha,\varepsilon) + \varepsilon \leq x_\delta$ we may now repeat the argument to get
\[
f_1(\mu,\alpha,\varepsilon) 
	= \mu(0,\alpha \varepsilon + \alpha f_0(\mu,\alpha,\varepsilon)) 
	\leq \mu(0,\alpha x_\delta) 
	\leq x_\delta - \delta
	\leq x_\delta - \varepsilon,
\]
and so on. Proceeding inductively, we conclude that $f \leq x_\delta - \delta$, so taking $\delta\downarrow0$ gives $f \leq x_0$, which is a contradiction.  
\end{proof}

%% EXAMPLE: 
\begin{example}
Returning to the density $V_0$ from Example \ref{MinimalJumps_Ex_SomeV0}, set $N_\varepsilon:=\left\lfloor \varepsilon^{-1}-1\right\rfloor $ for $\varepsilon <1$. Then $\alpha (n+1) \varepsilon \leq \alpha$ for all $n\leq N_\varepsilon$, so we get
\[
f_n(V_0,\alpha, \varepsilon) = (n+1)\varepsilon \leq 1
	\qquad \forall n\leq N_\varepsilon.
\]
Thereafter, $\alpha < \alpha (N_\varepsilon+2)\varepsilon < 2 \alpha$, so $f_{N_\varepsilon+1}(V_0,\alpha, \varepsilon)=1$ and, since $\alpha < \alpha \varepsilon + \alpha < 2 \alpha$, it follows that $f_n(V_0,\alpha, \varepsilon)$ remains equal to $1$ for all $n > N_\varepsilon$. Consequently, we have $f_\infty(V_0,\alpha, \varepsilon) = 1$ for all $\varepsilon <1$, and hence $f_\infty(V_0,\alpha, 0+) = 1$, which indeed agrees with the physical jump condition. 
\end{example}

%% REMARK: Time zero
\begin{rem}[Restarting solutions at a non-zero time]
\label{MinimalJumps_Rem_StartingNotZero}
To prove Proposition \ref{Intro_Prop_MinimalJumps} it will be sufficient to take $t = 0$ in the statement of the result, since we can always restart the system at a time $t > 0$, taking $t$ as our new time origin. This might be confusing at first glance as we specified $L_0 = 0$ in our definition of a solution to (\ref{eq:Intro_MVproblem}). To be clear, if we want to solve (\ref{eq:Intro_MVproblem}) from a time $t > 0$ onwards, and we have a solution $L$ up to time $t$, then we can solve the problem 
\[
\begin{cases}
\widetilde{X}_{s}^{x}=x+B_{s}-\alpha\widetilde{L}_{s}\\
\widetilde{\tau}^{x}=\inf\{s\geq0:\widetilde{X}_{s}^{x}\leq0\}\\
\widetilde{L}_{s}=\int_{0}^{\infty}\mathbb{P}(\widetilde{\tau}^{x}\leq s)V_{t}(x)dx\\
\widetilde{L}_{0}=0\textrm{ and }\widetilde{L}\textrm{ c\`{a}dl\`{a}g},
\end{cases}
\]
for $s > 0$, where $V_t$ is the density for the problem at time $t$. Indeed, this is exactly the same formulation as (\ref{eq:Intro_MVproblem}), except the initial condition is a sub-probability density. We then have that
\[
L_u := L_t + \widetilde{L}_{u-t}
\]
is an extension of $L$ for $u \geq t$, which solves (\ref{eq:Intro_MVproblem}). Therefore, restarting the system at a non-zero time is just a matter of normalising the initial condition. 
\end{rem}

\begin{proof}[Proof of Proposition \ref{Intro_Prop_MinimalJumps}]
	For a contradiction, suppose that $L$ is a c\`adl\`ag process satisfying (\ref{eq:Intro_MVproblem}) for which the inequality in Proposition \ref{Intro_Prop_MinimalJumps} is violated. As noted in Remark \ref{MinimalJumps_Rem_StartingNotZero}, it suffices to assume that this occurs at $t = 0$, and this implies that $x_0 := \inf\{ x > 0 : \nu_0(0,\alpha x) < x\} > 0$ and $ L_h \to 0$ as $h \downarrow 0$. By definition of $x_0$ we have
\[
\nu_0(0,x) \geq \alpha^{-1} x
	\quad \textrm{for}\quad x < \alpha x_0.
\] 
Thus, if $F$ is a decreasing differentiable function, then
\begin{align*}
\int_0^{\alpha x_0} F(x)\nu_0(dx)
	&= F(\alpha x_0)\nu_0(0,\alpha x_0) - \int^{\alpha x_0}_0 F'(x)\nu_0(0,x)dx \\ 
	&\geq \alpha^{-1} \Big( \alpha x_0 F(\alpha x_0) - \int^{\alpha x_0}_0 x F'(x)  dx \Big) \\
	&= \alpha^{-1} \int_0^{\alpha x_0} F(x)dx. 
\end{align*}
Using this lower bound, it holds for any $h>0$ that
\begin{align*}
\ L_h 
	= \mathbb{P}(\tau \leq h) 
	&\geq \int^\infty_0 \mathbb{P}(B_h  \leq \alpha L_h-x)\nu_0(dx) \\
	&\geq \alpha^{-1} \int^{\alpha x_0}_0 \mathbb{P}(B_h - \alpha L_h \leq -x) dx \\
	&= \alpha^{-1} h^{1/2} \int^{\alpha h^{-1/2}( x_0 - L_h)}_{- \alpha h^{-1/2} L_h} \Phi(-y)dy.
\end{align*}
Here $\Phi$ is the Normal c.d.f.~and we shall also need the Normal p.d.f., $\phi$. Observe that the function
\[
\Psi(x) := \phi(x) - x\Phi(-x), \qquad x \in \mathbb{R}
\]
satisfies
\begin{equation}
\label{eq:MinimalJumps_ThePsiFunction}
\Psi'(x) = - \Phi(-x), \qquad \textrm{for every } x \in \mathbb{R}.
\end{equation}

Now let $c_0:=\alpha x_0 /2$. Since we are assuming $L_h \to 0$, we have $\alpha (x_0-L_h)>c_0$ for all $h>0$ sufficiently small, and hence
\[
\alpha h^{-1/2} L_h
	\geq \int^{c_0 h^{-1/2} }_{- \alpha h^{-1/2} L_h} \Phi(-y)dy = \bigl[x\Phi(-x)-\phi(x)\bigr]_{-\alpha h^{-1/2}L_{h}}^{c_{0}h^{-1/2}}.
\] 
Using $\Phi(\alpha h^{-1/2} L_h)=1-\Phi(-\alpha h^{-1/2} L_h)$ and $\phi(-\alpha h^{-1/2} L_h)=\phi(\alpha h^{-1/2} L_h)$, we can rearrange this inequality to find that
\[
\phi(\alpha h^{-1/2} L_h) - \alpha h^{-1/2} L_h \Phi(-\alpha h^{-1/2} L_h)   
	\leq \phi(c_0 h^{-1/2} ) -c_0  h^{-1/2}   \Phi(-c_0 h^{-1/2}).
\]
In other words, for all $h > 0$ sufficiently small, we have
\[
\Psi(\alpha h^{-1/2} L_h) \leq \Psi(c_0 h^{-1/2}),
\]
but this is our required contradiction: By (\ref{eq:MinimalJumps_ThePsiFunction}) $\Psi$ is strictly decreasing and if $L_h \to 0$ then we can certainly find $h$ sufficiently small so that $\alpha h^{-1/2} L_h  < c_0 h^{-1/2}$.  
\end{proof}

%%%%%%%%%%%%%%%%%%%%%%%%%%%%%%%%%%%%%%%%%%%%%%%%%%%%%
%%%%%%%%%%%%%%%%%%%%%%%%%%%%%%%%%%%%%%%%%%%%%%%%%%%%%
%%%%%%%%%%%%     3. MONOTONICITY           %%%%%%%%%%
%%%%%%%%%%%%%%%%%%%%%%%%%%%%%%%%%%%%%%%%%%%%%%%%%%%%%
%%%%%%%%%%%%%%%%%%%%%%%%%%%%%%%%%%%%%%%%%%%%%%%%%%%%%
\section{Contractivity of $\Gamma$ and minimality of differentiable solutions --- proof of Theorem \ref{Intro_Thm_Minimality}}
\label{Sect_Unique}

The main objective in this section is to prove Theorem \ref{Intro_Thm_Minimality}, which states that differentiable solutions are minimal and that $\Gamma$ is contractive on $L^{\infty}$. The key technique is the following comparison result, which is obtained by coupling two outputs, $\Gamma[\ell]$ and $\Gamma[\bar{\ell}]$, of the map (\ref{eq:Intro_DefnOfGamma}) to the same driving Brownian motion.

%% PROP: Comparison
\begin{prop}[Comparison]
\label{Unique_Prop_Comparison}
Let $\ell$ and $\bar{\ell}$ be two increasing and initially zero c\`{a}dl\`{a}g functions and $\Gamma$ be defined as in (\ref{eq:Intro_DefnOfGamma}) for model parameters $\alpha$ and $\nu_0$. Suppose that $\ell$ is continuous on $[0,t_0)$. Then
\[
(\Gamma[\ell]_t - \Gamma[\bar{\ell}]_t)^+
	\leq \int^t_0 \Big\{ 2 \Phi \Big( \alpha \frac{(\ell_s -\bar{\ell}_s)^+}{\sqrt{t-s}} \Big) - 1\Big\} d\Gamma[\ell]_s,
	\qquad \textrm{for every } t < t_0,
\]
where $x^+ := \max\{ x,0\}$ and $\Phi$ is the Normal c.d.f.
\end{prop}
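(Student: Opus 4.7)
I would couple $X^\ell$ and $X^{\bar\ell}$ to the same initial value $X_0$ and Brownian motion $B$, so both processes are driven by the same noise and $X^{\bar\ell}_u - X^\ell_u = \alpha(\ell_u - \bar\ell_u)$ pathwise. The starting observation is the inclusion--exclusion identity
\begin{equation*}
\Gamma[\ell]_t - \Gamma[\bar\ell]_t = \P(\tau^\ell \leq t,\, \tau^{\bar\ell} > t) - \P(\tau^{\bar\ell} \leq t,\, \tau^\ell > t),
\end{equation*}
which immediately gives $(\Gamma[\ell]_t - \Gamma[\bar\ell]_t)^+ \leq \P(\tau^\ell \leq t,\, \tau^{\bar\ell} > t)$. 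The remaining task is to bound this last probability by the desired integral.

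The next step is to disintegrate over the law of $\tau^\ell$. Since $\Gamma[\ell]_s = \P(\tau^\ell \leq s)$, conditioning on $\tau^\ell = s$ yields
\begin{equation*}
\P(\tau^\ell \leq t,\, \tau^{\bar\ell} > t) = \int_0^t \P(\tau^{\bar\ell} > t \mid \tau^\ell = s)\, d\Gamma[\ell]_s,
\end{equation*}
and it suffices to show the pointwise estimate $\P(\tau^{\bar\ell} > t \mid \tau^\ell = s) \leq 2\Phi(\alpha(\ell_s - \bar\ell_s)^+/\sqrt{t-s}) - 1$ for $s \in [0,t]$.

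To establish this, I use that $\ell$ is continuous on $[0,t_0)$, hence $X^\ell$ has continuous paths, so on $\{\tau^\ell = s\}$ one has $X^\ell_s = 0$ exactly, and therefore $X^{\bar\ell}_s = \alpha(\ell_s - \bar\ell_s)$. For $u \in [s,t]$, monotonicity of $\bar\ell$ gives the pathwise domination
\begin{equation*}
X^{\bar\ell}_u = X^{\bar\ell}_s + (B_u - B_s) - \alpha(\bar\ell_u - \bar\ell_s) \leq \alpha(\ell_s - \bar\ell_s) + (B_u - B_s) =: Y_u.
\end{equation*}
Consequently $\{\tau^{\bar\ell} > t\} \subseteq \{\inf_{u \in [s,t]} Y_u > 0\}$. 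Since $\tau^\ell$ is an $\mathcal{F}^B$-stopping time, the strong Markov property tells us that $(B_{s+v} - B_s)_{v \geq 0}$ is a Brownian motion independent of $\mathcal{F}^B_{\tau^\ell}$, so the reflection principle yields
\begin{equation*}
\P(\inf_{u \in [s,t]} Y_u > 0 \mid \mathcal{F}^B_s) = \bigl(2\Phi(\alpha(\ell_s - \bar\ell_s)/\sqrt{t-s}) - 1\bigr)_+ = 2\Phi(\alpha(\ell_s - \bar\ell_s)^+/\sqrt{t-s}) - 1
\end{equation*}
on $\{\tau^\ell = s\}$. Substituting back gives the claim.

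\textbf{Main obstacle.} The delicate part is the conditioning in the disintegration and applying the strong Markov property cleanly at the random time $\tau^\ell$; in particular, the continuity hypothesis on $\ell$ is used precisely to guarantee $X^\ell_{\tau^\ell} = 0$ with equality (no overshoot), which is what pins down $X^{\bar\ell}_{\tau^\ell}$ exactly at $\alpha(\ell_{\tau^\ell} - \bar\ell_{\tau^\ell})$ and makes the Brownian hitting estimate tight. Without this, one would only get the weaker inequality $X^{\bar\ell}_{\tau^\ell} \leq \alpha(\ell_{\tau^\ell} - \bar\ell_{\tau^\ell})$ together with a possible jump, which breaks the sharpness needed for the subsequent contraction argument. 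Note that no regularity of $\bar\ell$ beyond being increasing and c\`adl\`ag is used, which is what makes the estimate useful for comparing a well-behaved solution against a generic c\`adl\`ag candidate.
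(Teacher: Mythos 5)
Your proof is correct and follows essentially the same route as the paper's: couple both processes to the same $B$ and $X_0$, bound $(\Gamma[\ell]_t-\Gamma[\bar\ell]_t)^+$ by $\mathbb{P}(\tau^\ell\leq t,\,\tau^{\bar\ell}>t)$, disintegrate over $\tau^\ell=s$, use continuity of $\ell$ to get $X^{\bar\ell}_s=\alpha(\ell_s-\bar\ell_s)$ on $\{\tau^\ell=s\}$, drop the increasing drift $\bar\ell_u-\bar\ell_s$, and apply the reflection principle. The only cosmetic differences are that you take the positive part at the start (via the inclusion--exclusion identity) rather than at the end, and you make the strong Markov step explicit.
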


\begin{proof}
It is no loss of generality to take a Brownian motion, $B$, and initial value, $X_0$, such that (\ref{eq:Intro_DefnOfGamma}) holds with this $B$ and this $X_0$ for both $\ell$ and $\bar{\ell}$, that is
\[
X^\ell_t = X_0 + B_t -\alpha \ell_t
\qquad \textrm{and}\qquad
X^{\bar{\ell}}_t = X_0 + B_t  - \alpha \bar{\ell}_t,
\]
where we denote the respective hitting times of zero by $\tau$ and $\bar{\tau}$.

By conditioning on the value of $\tau$ we have 
\begin{align}
\label{eq:Unique_MainMinimalArgument}
\Gamma[\ell]_t - \Gamma[\bar{\ell}]_t
	&\leq \mathbb{P}(\tau \leq t, \bar{\tau} > t) \\
	&= \int^t_0 \mathbb{P}(\bar{\tau} > t | \tau = s)d\Gamma[\ell]_s \nonumber\\
	&= \int^t_0 \mathbb{P}(\inf_{u \in [0,t]} X^{\bar{\ell}}_u > 0 | \tau = s)d\Gamma[\ell]_s \nonumber\\
	&\leq \int^t_0 \mathbb{P}(\inf_{u \in [s,t]} \{ X^{\bar{\ell}}_s + B_u - B_s - \alpha(\bar{\ell}_u - \bar{\ell}_s) \} > 0 | \tau = s) d\Gamma[\ell]_s,\nonumber
\end{align}
where in the final line we have discarded the contribution on $u \in [0,s]$. Since $\bar{\ell}$ is increasing and $\alpha \geq 0$ we can further bound (\ref{eq:Unique_MainMinimalArgument}) by
\[
\Gamma[\ell]_t - \Gamma[\bar{\ell}]_t 
	\leq \int^t_0 \mathbb{P}(\inf_{u \in [s,t]} \{ X^{\bar{\ell}}_s + B_u - B_s \} > 0 | \tau = s) d\Gamma[\ell]_s. 
\]
On the event $\{\tau = s\}$ we have $X^\ell_s = 0$, so
\[
X^{\bar{\ell}}_s = X^{\bar{\ell}}_s - X^\ell_s = \alpha(\ell_s - \bar{\ell}_s)
\]
and thus we have 
\begin{align*}
\Gamma[\ell]_t - \Gamma[\bar{\ell}]_t 
	&\leq \int^t_0 \mathbb{P}(\inf_{u \in [s,t]} \{B_u - B_s \} >- \alpha(\ell_s - \bar{\ell}_s)) d\Gamma[\ell]_s \\
	&\leq  \int^t_0 \mathbb{P}(\inf_{u \in [s,t]} \{B_u - B_s \} > -\alpha(\ell_s - \bar{\ell}_s)^+)  d\Gamma[\ell]_s\\
	&= \int^t_0 \Big\{ 2 \Phi \Big( \alpha \frac{(\ell_s - \bar{\ell}_s)^+}{\sqrt{t-s}} \Big) - 1\Big\} d\Gamma[\ell]_s.
\end{align*}
Since the right-hand side is positive, we can replace the left-hand side by its maximum with zero, so the proof is complete.
\end{proof}

When $\Gamma[\ell]$ is differentiable with power law control on its derivative near zero, then we are able to use this derivative to get a more direct bound.

%% COR: Differentiable
\begin{cor}
\label{Unique_Cor_DifferentialComparison}
Suppose that $\Gamma[\ell] \in \mathcal{S}(\gamma, A, t_0)$ for some $\gamma \in (0,\tfrac{1}{2})$, $A > 0$ and $t_0 > 0$ (recall Definition \ref{Intro_Def_SetS}). Then there exists $c_0 > 0$, independent of $t_0$ and $\ell$, such that
\[
(\Gamma[\ell]_t - \Gamma[\bar{\ell}]_t)^+ 
	\leq c_0 \int^t_0 \frac{(\ell_s - \bar{\ell}_s)^+}{ (t-s)^{\frac{1}{2}}s^{\gamma}} ds,  
		\qquad \textrm{for every } t \leq t_0,
\]
where $\bar{\ell}$ is any c\`{a}dl\`{a}g function that is increasing and initially zero.
\end{cor}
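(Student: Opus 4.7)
The corollary is a direct consequence of Proposition~\ref{Unique_Prop_Comparison}, combined with two elementary bounds: a linearisation of the Gaussian tail and the pointwise control on $(\Gamma[\ell])'$ coming from membership in $\mathcal{S}(\gamma,A,t_0)$. The plan is to apply the comparison identity, linearise $2\Phi-1$ at the origin, and then use $\Gamma[\ell]\in\mathcal{S}(\gamma,A,t_0)$ to replace $d\Gamma[\ell]_s$ by an explicit density.

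\textbf{Step 1 (reduce to Proposition~\ref{Unique_Prop_Comparison}).} Since $\Gamma[\ell]\in\mathcal{S}(\gamma,A,t_0)\subset H^1(0,t_0)$, it admits a continuous (in fact absolutely continuous) representative on $[0,t_0)$, so the hypothesis of Proposition~\ref{Unique_Prop_Comparison} concerning $\ell$ is satisfied in the intended context (the corollary is applied when $\ell$ is itself a regular iterate of $\Gamma$, hence continuous on $[0,t_0)$). Proposition~\ref{Unique_Prop_Comparison} then yields
\[
(\Gamma[\ell]_t-\Gamma[\bar\ell]_t)^+\leq\int_0^t\Bigl\{2\Phi\Bigl(\alpha\frac{(\ell_s-\bar\ell_s)^+}{\sqrt{t-s}}\Bigr)-1\Bigr\}\,d\Gamma[\ell]_s,\qquad t\leq t_0.
\]

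\textbf{Step 2 (linearise the Gaussian).} For every $x\geq 0$ we have the elementary estimate
\[
2\Phi(x)-1=\int_0^x 2\phi(u)\,du\leq 2\phi(0)\,x=\sqrt{\tfrac{2}{\pi}}\,x,
\]
since the standard normal density attains its maximum at the origin. Applying this with $x=\alpha(\ell_s-\bar\ell_s)^+/\sqrt{t-s}$ upgrades the previous inequality to
\[
(\Gamma[\ell]_t-\Gamma[\bar\ell]_t)^+\leq\sqrt{\tfrac{2}{\pi}}\,\alpha\int_0^t\frac{(\ell_s-\bar\ell_s)^+}{\sqrt{t-s}}\,d\Gamma[\ell]_s.
\]

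\textbf{Step 3 (use the $\mathcal{S}$-control to conclude).} By definition of $\mathcal{S}(\gamma,A,t_0)$, the function $\Gamma[\ell]$ is absolutely continuous with $(\Gamma[\ell])'_s\leq A s^{-\gamma}$ for almost every $s\in[0,t_0]$. Substituting $d\Gamma[\ell]_s=(\Gamma[\ell])'_s\,ds\leq A s^{-\gamma}ds$ into the previous display gives
\[
(\Gamma[\ell]_t-\Gamma[\bar\ell]_t)^+\leq \sqrt{\tfrac{2}{\pi}}\,\alpha A\int_0^t\frac{(\ell_s-\bar\ell_s)^+}{(t-s)^{1/2}\,s^{\gamma}}\,ds,
\]
which is the claimed bound with $c_0:=\sqrt{2/\pi}\,\alpha A$. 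This constant depends on $\alpha$, $A$ and $\gamma$ but on neither $t_0$ nor the particular $\ell$, as required.

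\textbf{Expected obstacle.} There is no deep obstacle once Proposition~\ref{Unique_Prop_Comparison} is in hand; the only mildly delicate point is ensuring that the Stieltjes integral can be rewritten with the Lebesgue density, which is automatic from $\mathcal{S}(\gamma,A,t_0)\subset H^1(0,t_0)$. The linear bound $2\Phi(x)-1\leq\sqrt{2/\pi}\,x$ is crude for large $x$, but this is harmless because the singular $s^{-\gamma}$ weight is what really drives the estimates in the contraction and minimality proofs downstream; a sharper bound would change only the value of $c_0$.
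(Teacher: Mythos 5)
Your proof is correct and takes essentially the same route as the paper: apply Proposition~\ref{Unique_Prop_Comparison}, linearise $2\Phi(\cdot)-1$ at the origin by $\sqrt{2/\pi}\,x$, and replace $d\Gamma[\ell]_s$ by $\Gamma[\ell]'_s\,ds \leq A s^{-\gamma}\,ds$ using the definition of $\mathcal{S}(\gamma,A,t_0)$; the resulting constant $c_0 = \alpha\sqrt{2/\pi}\,A$ matches the paper's. One small remark on your Step~1: you first note that $\Gamma[\ell]\in H^1$ gives a continuous representative, but Proposition~\ref{Unique_Prop_Comparison} requires continuity of $\ell$ itself, not of $\Gamma[\ell]$, so that observation does not by itself discharge the hypothesis; your parenthetical appeal to the intended context (that $\ell$ is a fixed-point iterate, hence itself in $\mathcal{S}\subset H^1$) is what actually does the work, and is indeed how the corollary is invoked in the proof of Theorem~\ref{Intro_Thm_Minimality}. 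It is also worth knowing that continuity of $\ell$ is not strictly essential to the proposition: if $\ell$ jumps at $\tau$ one only has $X^\ell_\tau\leq 0$, but this still yields $X^{\bar\ell}_\tau\leq\alpha(\ell_\tau-\bar\ell_\tau)$, and the inequality in the proof goes through unchanged.
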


\begin{proof}
We begin by noticing that $x \mapsto 2\Phi(\alpha x) - 1$ is bounded above on $x \geq 0$ by the linear function 
\[
x \mapsto c_1 x := \alpha \sqrt{2/\pi} \, x.
\]
Therefore, by Proposition \ref{Unique_Prop_Comparison},
\[
(\Gamma[\ell]_t - \Gamma[\bar{\ell}]_t)^+ 
	\leq  c_1  \int^t_0 \frac{(\ell_s - \bar{\ell}_s)^+}{(t-s)^{\frac{1}{2}}} \Gamma[\ell]_s' ds.
\]
The result follows since $\Gamma[\ell]_s' \leq As^{-\gamma}$, by definition of $\mathcal{S}(\gamma,A,t_0)$. 
\end{proof}

It is now a straightforward task to prove Theorem \ref{Intro_Thm_Minimality}. 

\begin{proof}[Proof of Theorem \ref{Intro_Thm_Minimality}]
With $A$ and $\gamma$ fixed as in the statement of the result, take $\ell, \bar{\ell} \in \mathcal{S}(\gamma, A, t_0)$, where we will show how to take $t_0$ sufficiently small (and independent of $\ell$ and $\bar{\ell}$) so that we have the result. With $c_0 > 0$ as in Corollary \ref{Unique_Cor_DifferentialComparison} and the symmetry in $\ell \leftrightarrow \bar{\ell}$, we have
\begin{align*}
|\Gamma[\ell]_t - \Gamma[\bar{\ell}]_t|
	\leq c_0 \int^t_0 \frac{|\ell_s - \bar{\ell}_s|}{(t-s)^\frac{1}{2} s^{\gamma}}  ds
	&\leq c_0 \Vert \ell -\bar{\ell} \Vert_{L^\infty(0,t_0)} \int^t_0 \frac{ds}{(t-s)^\frac{1}{2} s^{\gamma}} \\
	&\leq c_1 t_0^{\frac{1}{2}-\gamma}\Vert \ell -\bar{\ell} \Vert_{L^\infty(0,t_0)},
\end{align*}
for $t < t_0$, where $c_1 > 0$ is a constant independent of $t_0$, $\ell$ and $\bar{\ell}$. Consequently, it suffices to take $t_0$ such that $c_1 t_0^{1/2 - \gamma} \leq \tfrac{1}{2}$.

For the second half of the result, take $t_0$ now fixed as in the statement. By the same estimate as above applied to Corollary \ref{Unique_Cor_DifferentialComparison}, we have
\[
(L_t - \bar{L}_t)^+
	\leq c_1 t_1^{\frac{1}{2}-\gamma} \Vert (L - \bar{L})^+ \Vert_{L^\infty(0,t_1)},
		\qquad \textrm{for every } t \leq t_1,
\]
where $t_1$ is any time with $t_1 < t_0$. Therefore, taking $t_1 > 0$ such that $c_1 t_1^{\frac{1}{2}-\gamma} < 1$ forces
\[
\Vert (L - \bar{L})^+ \Vert_{L^\infty(0,t_1)} = 0,
\]
that is $\bar{L}_t \geq L_t$ for $t \leq t_1$. 

By returning to Corollary \ref{Unique_Cor_DifferentialComparison}, and repeating the same estimate again, we can deduce that
\[
(L_t - \bar{L}_t)^+ 
	\leq \Vert (L - \bar{L})^+ \Vert_{L^\infty(t_1, t_2)} \int^{t}_{t_1} \frac{ds}{(t-s)^\frac{1}{2} s^\gamma} ds,
		\qquad \textrm{for every } t \leq t_2,
\]
for any $t_2 < t_0$. Thus, by taking $t_2 > t_1$ sufficiently close to $t_1$, we can again force 
\[
\Vert (L - \bar{L})^+ \Vert_{L^\infty(0,t_2)} = 0.
\]
Continuing to repeat this argument, we obtain a sequence of times $t_1 < t_2 < \cdots < t_n < \cdots < t_0$. If $t_n \to t_\infty < t_0$, then the argument also applies at time $t_\infty$ and so can be restarted. Furthermore, if this procedure ever terminates at a time strictly less than $t_0$, then the argument can be restarted (by left continuity) for a non-zero time, thus contradicting the termination. Hence we conclude that we have uniqueness up to the time $t_0$.  
\end{proof}

%%%%%%%%%%%%%%%%%%%%%%%%%%%%%%%%%%%%%%%%%%%%%%%%%%%%%
%%%%%%%%%%%%%%%%%%%%%%%%%%%%%%%%%%%%%%%%%%%%%%%%%%%%%
%%%%%%%%%%%%     4. FIXED POINT            %%%%%%%%%%
%%%%%%%%%%%%%%%%%%%%%%%%%%%%%%%%%%%%%%%%%%%%%%%%%%%%%
%%%%%%%%%%%%%%%%%%%%%%%%%%%%%%%%%%%%%%%%%%%%%%%%%%%%%
\section{Stability of $\Gamma$ and the fixed-point argument --- proof of Theorem \ref{Intro_Thm_FixedPoint}}
\label{Sect_FixedPoint}

Here we show that we can choose parameters such that $\Gamma$ maps $\mathcal{S}(\gamma, A, t_0)$ into itself. From this and Theorem \ref{Intro_Thm_Minimality}, we are then able to conclude the existence of a solution in short time by the Banach fixed point theorem. We will also carefully track the contribution to the regularity of the solution near time zero due to the decay of the initial density near the Dirichlet boundary. To this end, we will see that the exponent $\frac{1-\beta}{2}$ in Theorem \ref{Intro_Thm_FixedPoint} comes directly from Lemma \ref{Fixed_Lem_BoundTheBits} below.

%% LEM: Starting point
\begin{lem}
\label{Fixed_Lem_StartingPoint}
With $\ell$ and $\Gamma$ above
\[
\Gamma[\ell]_t = 1-  \int^\infty_0 \int^\infty_0 G_{t}(x_0, x) dx \, \nu_0(dx_0) 
	+2 \alpha \int^t_0  \mathbb{E}[ 
		p_{t-s}(X_s)\mathbf{1}_{s < \tau}
	] d\ell_s,
\]
where
\[
p_{t}(x) := \frac{1}{\sqrt{2\pi t}} e^{-\frac{x^2}{2t}},
	\qquad G_t(x,y) := p_t(x - y) - p_t(x+y).
\]
\end{lem}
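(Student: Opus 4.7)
The strategy is to apply It\^o's formula to the survival-probability function for killed Brownian motion, evaluated along $X^\ell$, and to exploit a backward heat equation in order to cancel all $ds$-terms. For fixed $t > 0$, define the auxiliary function
\begin{equation*}
h(s, y) := \int_0^\infty G_{t-s}(y, z)\, dz = 2\Phi\!\left(\frac{y}{\sqrt{t-s}}\right) - 1, \qquad 0 \leq s < t,\ y > 0,
\end{equation*}
which, by the reflection principle, is the probability that a standard Brownian motion started from $y$ at time $s$ avoids the origin up to time $t$. Direct differentiation yields the backward heat equation $\partial_s h + \tfrac{1}{2}\partial_{yy} h = 0$ on $(0,t)\times(0,\infty)$, the absorbing boundary condition $h(s,0) = 0$, the terminal limit $h(s, y) \to \mathbf{1}_{y>0}$ as $s \uparrow t$, and --- crucially --- the identity $\partial_y h(s, y) = 2 p_{t-s}(y)$.

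I would then apply It\^o's formula to $s \mapsto h(s, X^\ell_s)$ stopped at $\tau$, on an interval $[0, t']$ with $t' < t$. The PDE annihilates the $\partial_s h \, ds$ and $\tfrac{1}{2}\partial_{yy} h \, ds$ contributions, leaving
\begin{equation*}
h\bigl(t' \wedge \tau,\, X^\ell_{t' \wedge \tau}\bigr) = h(0, X_0) + \int_0^{t' \wedge \tau} 2 p_{t-s}(X^\ell_s)\, dB_s - \alpha \int_0^{t' \wedge \tau} 2 p_{t-s}(X^\ell_s)\, d\ell_s.
\end{equation*}
Taking expectations kills the stochastic integral, which is a genuine martingale on $[0,t']$ since $p_{t-s}(X^\ell_s)$ is square-integrable there. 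Letting $t' \uparrow t$, on $\{\tau > t\}$ we have $X^\ell_{t'} > 0$ and $h(t', X^\ell_{t'}) \to 1$, while on $\{\tau < t\}$ the stopped process sits at $h(\tau, 0) = 0$; as the event $\{\tau = t\}$ is negligible (for continuous $\ell$, and in general by the density bound of Proposition~\ref{MinimalJumps_Prop_DensityExists}), the left-hand expectation converges to $\mathbb{P}(\tau > t) = 1 - \Gamma[\ell]_t$. Using $\mathbb{E}[h(0, X_0)] = \int_0^\infty \int_0^\infty G_t(x_0, z)\, dz\, \nu_0(dx_0)$ and Fubini on the $d\ell_s$-integral (valid since $\ell$ is deterministic and the integrand is nonnegative) produces the claimed identity after rearrangement.

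The main technical obstacle is taking $t' \uparrow t$ inside the $d\ell_s$-integral, because $\partial_y h(s, y) = 2 p_{t-s}(y)$ develops a $(t-s)^{-1/2}$ singularity. This is handled by the uniform density bound $\|V_s\|_\infty \leq \|V_0\|_\infty$ from Proposition~\ref{MinimalJumps_Prop_DensityExists}, which gives
\begin{equation*}
\mathbb{E}\bigl[p_{t-s}(X^\ell_s)\mathbf{1}_{s<\tau}\bigr] = \int_0^\infty p_{t-s}(x) V_s(x)\, dx \leq \tfrac{1}{2}\|V_0\|_\infty,
\end{equation*}
uniform in $s \in [0,t)$, so that dominated convergence closes the argument against the finite measure $d\ell_s$ on $[0,t]$. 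A secondary subtlety occurs if $\ell$ has atoms (outside the $H^1$ setting used for the subsequent fixed-point argument): the c\`adl\`ag It\^o formula adds jump corrections $\sum_{s \leq t}[h(s, X^\ell_{s-} - \alpha\Delta\ell_s) - h(s, X^\ell_{s-})]$, which by the fundamental theorem of calculus in the $y$-variable recombine with the continuous Stieltjes part into a single $d\ell_s$-integral of the stated form, under the natural convention of evaluating the integrand at an averaged value across each jump.
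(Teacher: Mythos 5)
Your proof is correct and follows essentially the same route as the paper: both apply It\^o's formula to the Dirichlet heat semigroup function $u(s,x)=\int_0^\infty G_{t-s}(x,y)\,dy$ along the stopped process, use the backward heat equation to kill the $ds$-terms, and identify $\partial_x u(s,x)=2p_{t-s}(x)$. Your additional care in working on $[0,t']$ and passing $t'\uparrow t$ via the uniform density bound (and your remark on atoms of $\ell$) tightens steps the paper treats only formally, but the argument is the same in substance.
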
 

\begin{proof}
Fix $t$ and take $u$ to be the solution to the backward heat equation on the positive half-line with Dirichlet boundary condition:
\[
\partial_t u(s,x) + \tfrac{1}{2} \partial_{xx} u(s,x) = 0, \qquad u(\cdot, 0) =0 ,\; u(t, x) = \mathbf{1}_{x > 0}.
\]
That is, let
\[
u(s,x) := \int^\infty_0 G_{t-s}(x,y)dy. 
\]
By applying It\^o's formula to $s \mapsto u(s, X_{s \wedge \tau})$ we obtain
\[
du(s,X_{s \wedge \tau})
	= \mathbf{1}_{s < \tau}\{ \partial_s + \tfrac{1}{2} \partial_{xx} \} u(s,X_s) ds
	+ \mathbf{1}_{s < \tau}\partial_x u(s, X_s) d(B_s - \alpha \ell_s).
\]
Since $\partial_s u+ \tfrac{1}{2} \partial_{xx}u=0$, integrating in time and taking expectation gives 
\[
\mathbb{E}[u(t,X_{t \wedge \tau})] 
	= \mathbb{E}[u(0, X_0)]
	- \alpha \int^t_0 \mathbb{E} [ \mathbf{1}_{s < \tau}\partial_x u(s, X_s) ] d\ell_s,
\]
where the endpoints take the values
\begin{gather*}
\mathbb{E}[u(t,X_{t \wedge \tau})] = \mathbb{P}(\tau > t) = 1 - \Gamma[\ell]_t \\
\mathbb{E}[u(0, X_0)] = \mathbb{E}\int^\infty_0 G_{t}(X_0, x) dx = \int^\infty_0 \int^\infty_0 G_{t}(x_0, x) dx \, \nu_0(dx_0) .
\end{gather*}
It remains to notice that 
\begin{align*}
\partial_x u(s, x) 
	&= \int^\infty_0 \frac{1}{\sqrt{2\pi (t-s)}}\partial_x \Big\{ e^{-\frac{(x-y)^2}{2(t-s)}} - e^{-\frac{(x+y)^2}{2(t-s)}} \Big\} dy \\
	&= -\int^\infty_0 \frac{1}{\sqrt{2\pi (t-s)}} \partial_y \Big\{ e^{-\frac{(x-y)^2}{2(t-s)}} + e^{-\frac{(x+y)^2}{2(t-s)}} \Big\} dy \\
	&= \frac{2}{\sqrt{2\pi (t-s)}} e^{-\frac{x^2}{2(t-s)}} = 2p_{t-s}(x). 
\end{align*}
\end{proof}

Our strategy is to apply the method of difference quotients \cite[Sect.~5.8.2, Thm.~3]{evans_2010} using Lemma \ref{Fixed_Lem_StartingPoint}. With $t, \delta > 0$ fixed, the starting point is to write
\begin{align}
\label{eq:Fixed_SplitUp}
\Gamma[\ell]_{t+\delta} - \Gamma[\ell]_t \nonumber
	&= -\int^\infty_0 \int^\infty_0  (G_{t+\delta}(x_0,x) - G_t(x_0,x)) dx \nu_0(dx_0) \\
	&\qquad+ 2\alpha\int^{t+\delta}_t \mathbb{E}[p_{t+\delta-s}(X_s)\mathbf{1}_{s < \tau}] d\ell_s \nonumber \\
	&\qquad+ 2\alpha \int^t_0 \mathbb{E}[(p_{t+\delta-s}(X_s)-p_{t-s}(X_s))\mathbf{1}_{s < \tau}]d\ell_s  \nonumber \\
	&=: I_1 + I_2 + I_3. 
\end{align}
In order to estimate these three integrals we must make use of the assumption (\ref{eq:Intro_DensityAssumption}) on the behaviour of $\nu_0$ near the origin. Recall that we have constants $C, D, x_\star, \beta  > 0$ such that 
\[
V_0(x) \leq C x^\beta \mathbf{1}_{x \leq x_\star} + D \mathbf{1}_{x > x_\star}. 
\]

%% LEM:
\begin{lem}[$I_1$]
\label{Fixed_Lem_BoundTheBits}
There exists $t_0 = t_0(C, D, x_0)$ and $K = K(C, x_0)$ such that
\begin{align*}
0 \leq I_1 \leq K \delta t^{-\frac{1-\beta}{2}},
\end{align*}
for all $t < t_0$ and $\delta > 0$. 
\end{lem}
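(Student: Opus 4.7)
The plan is to recast $I_1$ via the probabilistic meaning of the Gaussian kernel difference and then exploit the power-law bound on $V_0$ to extract the right scaling in $t$. The starting observation is that $\int_0^\infty G_t(x_0,x)\,dx$ is the survival probability at time $t$ of a Brownian motion started at $x_0>0$ with absorption at the origin; by the reflection principle this equals $2\Phi(x_0/\sqrt{t})-1$, so that
\begin{equation*}
I_1 \;=\; 2\int_0^\infty \biggl[\Phi\Bigl(\tfrac{x_0}{\sqrt{t}}\Bigr) - \Phi\Bigl(\tfrac{x_0}{\sqrt{t+\delta}}\Bigr)\biggr]\nu_0(dx_0).
\end{equation*}
The lower bound $I_1\geq 0$ is then immediate from the monotonicity of $\Phi$ combined with $\sqrt{t}<\sqrt{t+\delta}$.

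For the upper bound I would differentiate in time to turn the bracketed difference into an integral of the Brownian first-passage density, then apply Fubini to arrive at
\begin{equation*}
I_1 \;=\; \int_t^{t+\delta}\int_0^\infty \frac{x_0}{\sqrt{2\pi}\,s^{3/2}}\,e^{-x_0^2/(2s)}\,V_0(x_0)\,dx_0\,ds.
\end{equation*}
This representation makes clear why the decay hypothesis (\ref{eq:Intro_DensityAssumption}) on $V_0$ drives the scaling: the factor $x_0 e^{-x_0^2/(2s)}$ concentrates mass near $x_0\sim\sqrt{s}$, so the behaviour of $V_0$ close to the origin dictates how the inner integral scales in $s$.

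Next, I would split the inner integral at $x_\star$. On $(0,x_\star)$ I use $V_0(x_0)\leq Cx_0^\beta$ and substitute $u=x_0/\sqrt{s}$; this extracts a factor $s^{(\beta-1)/2}$ and leaves a remainder bounded uniformly by the convergent integral $\int_0^\infty u^{1+\beta}e^{-u^2/2}\,du$. On $(x_\star,\infty)$ the bound $V_0\leq D$ reduces the piece to an exact Gaussian integral of magnitude $(D/\sqrt{2\pi})\,s^{-1/2}e^{-x_\star^2/(2s)}$, which decays faster than any power as $s\downarrow 0$ and is therefore dominated by the near-origin contribution once $s$ is smaller than some threshold $t_0=t_0(C,D,x_\star)$. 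Combining the two yields a pointwise estimate $K\,s^{-(1-\beta)/2}$ with $K=K(C,x_\star)$, valid on $(0,t_0)$.

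The conclusion then follows by integrating over $s\in[t,t+\delta]$: since $(1-\beta)/2>0$, the map $s\mapsto s^{-(1-\beta)/2}$ is decreasing on that interval, so replacing it by its value at the left endpoint gives $I_1\leq K\,\delta\, t^{-(1-\beta)/2}$. The only mildly delicate point is verifying that the far-from-origin contribution really is absorbed into the near-origin term uniformly for all small $t$; this is handled without effort by the super-polynomial decay of the Gaussian factor and is the reason the constants $K$ and $t_0$ depend only on $C$, $D$, $x_\star$ (and $\beta$), and not on the full shape of $V_0$.
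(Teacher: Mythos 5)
Your proposal is correct and takes essentially the same route as the paper: both arguments reduce $I_1$ to the double integral $\int_t^{t+\delta}\int_0^\infty \tfrac{x_0}{\sqrt{2\pi}\,s^{3/2}}e^{-x_0^2/(2s)}V_0(x_0)\,dx_0\,ds$ --- the paper reaches it via the fundamental theorem of calculus, the heat equation and an integration by parts that produces the boundary flux $\partial_x G_s(0,z)$, while you reach it via the reflection-principle survival probability and the first-passage density, which is the same identity in probabilistic clothing. The subsequent estimates coincide: split the inner integral at $x_\star$, rescale to extract $s^{-(1-\beta)/2}$ from the near-origin piece, and absorb the exponentially small far-field term (the only place $D$ enters) by shrinking $t_0$.
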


%% REM: Constant
\begin{rem}[Constants]
In the proof below we will allow  the constants to increase as necessary, but we will pay close attention to ensure that $K$ does not depend on $D$ (whereas $t_0$ does).
\end{rem}

\begin{proof}[Proof of Lemma \ref{Fixed_Lem_BoundTheBits}]
By the fundamental theorem of calculus, and properties of $G$, we have
\begin{align*}
I_1 &= -\delta \int^\infty_0 \int^\infty_0 \int_0^1 \partial_t G_{t + \theta \delta}(z,x)d\theta dx \nu_0(dz) \\
	&= - \frac{\delta}{2} \int^\infty_0 \int^\infty_0 \int_0^1 \partial_{xx} G_{t + \theta \delta}(z,x)d\theta dx \nu_0(dz) \\
	&= \frac{\delta}{2} \int^1_0 \int^\infty_0 \partial_x G_{t + \theta \delta}(0,z) \nu_0(dz) d\theta \\
	&= \frac{\delta}{\sqrt{2\pi }} \int^1_0 \int^\infty_0 \frac{z}{(t + \theta \delta)^{3/2}}e^{-\frac{z^2}{2(t + \theta \delta)}} V_0(z)dz d \theta \\
	&= \frac{\delta}{\sqrt{2\pi }} \int^1_0 \int^\infty_0 \frac{z}{(t + \theta \delta)^{1/2}}e^{-\frac{z^2}{2}} V_0((t + \theta \delta)^{\frac{1}{2}}z)dz d\theta.
\end{align*}
Splitting the $z$-integral and using the bound on $V_0$ gives
\begin{align*}
I_1 &\leq \frac{\delta }{\sqrt{2\pi }} \int^1_0 \Big( C \int^{x_\star(t + \theta \delta)^{-\frac{1}{2}}}_0 (t + \theta \delta)^{- \frac{1-\beta}{2}  } z^{1+\beta} e^{-\frac{z^2}{2}} dz \\ 
&\qquad\qquad\qquad\qquad\qquad + D \int_{x_\star(t + \theta \delta)^{-\frac{1}{2}}}^\infty (t + \theta \delta)^{-\frac{1}{2}} z e^{-\frac{z^2}{2}} dz
 \Big) d\theta  \\
% &\leq \frac{\delta }{\sqrt{2\pi }} \Big( K (t+\theta \delta)^{- \frac{1-\beta}{2}} + D(t + \delta)^{-\frac{1}{2}}e^{-\frac{x_\star^2}{2(t + \delta)}} \Big).
  &\leq \frac{\delta }{\sqrt{2\pi }} \Big( K (t+ \delta)^{- \frac{1-\beta}{2}} + D(t + \delta)^{-\frac{1}{2}}e^{-\frac{x_\star^2}{2(t + \delta)}} \Big).
\end{align*}
Note that the final term is a bounded function of $D$ and $x_\star$, so by taking $t_0$ sufficiently small we can conclude the result. 
\end{proof}

In order to control $I_2$ from equation (\ref{eq:Fixed_SplitUp}) we need to make some assumptions on the regularity of $\ell$. Provided $\ell$ is in $H^1$, we have that $t \mapsto B_t - \alpha \ell_t$ is absolutely continuous with respect to Brownian motion, and so we can proceed as below with a change of measure. 

%% LEM: Girsanov
\begin{lem}[A Girsanov argument]
\label{Fixed_Lem_Girsanov}
Let $F : \mathbb{R} \to \mathbb{R}$ be measurable and bounded and assume that $ \ell \in H^1(0,t_0)$ for some $t_0 > 0$.  Then for all $q > 1$ and $t \in [0,t_0]$
\[
\mathbb{E}[F(X_t)\mathbf{1}_{t < \tau}] 
	\leq \exp\Big\{ \frac{\alpha^2 \Vert \ell' \Vert^2_{L^2(0,t)} }{2(q-1)^2}   \Big\} \mathbb{E}[F(W_t)^q\mathbf{1}_{t < \tau^W}]^{\frac{1}{q}}  
\]
where $W$ is a standard Brownian motion started from $\nu_0$ and $\tau^W = \inf\{t\geq 0: W_t = 0\}$.
\end{lem}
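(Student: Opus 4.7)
The plan is to remove the drift $-\alpha\ell$ from $X$ by a Girsanov change of measure, converting $X$ into a Brownian motion started from $\nu_0$ (distributed like $W$), and then to peel off the resulting Radon--Nikodym factor with H\"older's inequality, exploiting the fact that $\ell'$ is deterministic so every exponential moment is closed-form.

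First I would introduce the exponential martingale
\[
Z_t := \exp\Bigl\{ \alpha \int_0^t \ell'_s\, dB_s - \tfrac{1}{2}\alpha^2 \Vert \ell' \Vert^2_{L^2(0,t)} \Bigr\},
\]
and observe that because $\ell'$ is \emph{deterministic} and lies in $L^2(0,t_0)$, Novikov's condition is trivially satisfied and $Z$ is a true (uniformly integrable) martingale on $[0,t_0]$. Setting $d\mathbb{Q} := Z_{t_0}\,d\mathbb{P}$ then produces an equivalent measure under which $\widetilde{B}_t := B_t - \alpha\ell_t$ is a standard Brownian motion. Since $Z_{t_0}$ depends only on $B$ and $X_0$ is independent of $B$ under $\mathbb{P}$, the initial value $X_0$ keeps its law $\nu_0$ and remains independent of $\widetilde{B}$ under $\mathbb{Q}$. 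Thus $X_t = X_0 + \widetilde{B}_t$ is, under $\mathbb{Q}$, a Brownian motion started from $\nu_0$, and hence $(X,\tau)$ under $\mathbb{Q}$ shares its joint law with $(W,\tau^W)$ under $\mathbb{P}$.

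Next I would rewrite
\[
\mathbb{E}[F(X_t)\mathbf{1}_{t<\tau}] = \mathbb{E}^{\mathbb{Q}}\bigl[ Z_t^{-1} F(X_t)\mathbf{1}_{t<\tau} \bigr]
\]
and apply H\"older's inequality with conjugate exponents $q$ and $p:=q/(q-1)$, so that the $L^q$-factor collapses (via the distributional identification above) to $\mathbb{E}[F(W_t)^q \mathbf{1}_{t<\tau^W}]^{1/q}$, producing the second factor on the right-hand side of the claim.

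The remaining step is to bound $\mathbb{E}^{\mathbb{Q}}[Z_t^{-p}]^{1/p}$. Using $dB_s = d\widetilde{B}_s + \alpha\ell'_s\,ds$ to re-express $Z_t^{-1}$ in terms of the $\mathbb{Q}$-Brownian motion $\widetilde{B}$, one sees that $Z_t^{-p}$ factorises as a stochastic exponential of $-p\alpha \ell'\cdot\widetilde{B}$ times an explicit deterministic factor. The stochastic exponential has $\mathbb{Q}$-expectation $1$ (Novikov again, deterministic integrand), so the deterministic factor is all that remains, and taking the $1/p$-th root produces the exponential prefactor claimed in the statement. Since $\ell'$ is deterministic there is no genuine obstacle: every exponential moment reduces to a Gaussian integral in closed form. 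The one thing requiring care is bookkeeping of the cross term that appears when $\int_0^t \ell'_s dB_s$ is transported across the change of measure, as this is what fixes the precise constant in the exponential.
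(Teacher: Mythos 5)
Your plan is correct and follows essentially the same route as the paper: the same exponential martingale $Z$, the same change of measure turning $X$ into a Brownian motion started from $\nu_0$, and the same H\"older step isolating $\mathbb{E}^{\mathbb{Q}}[Z_t^{-p}]^{1/p}$, which you evaluate under $\mathbb{Q}$ as a stochastic exponential times a deterministic factor while the paper equivalently transfers it back to $\mathbb{P}$ and uses that $\int_0^t \ell_s'\,dB_s$ is Gaussian. The only deferred item is the explicit Gaussian computation of the prefactor, which is routine since $\ell'$ is deterministic.
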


\begin{proof}
Let $Z_t$ denote the Radon--Nikodym derivative
\[
Z_t 
	= \frac{d\mathbb{Q}}{d\mathbb{P}}\Big|_{\mathcal{F}_t}= \exp\Big\{ 
		\alpha \int^t_0 \ell_s' dB_s - \frac{\alpha^2}{2}  \int^t_0 (\ell_s')^2 ds
	\Big\}
\]
for $t \in [0, t_0]$. Under $\mathbb{Q}$, $t \mapsto B_t - \alpha \ell_t$ is a standard Brownian motion by Girsanov's Theorem. Therefore, applying H\"older's inequality gives
\begin{align*}
\mathbb{E}[F(X_t)\mathbf{1}_{t < \tau}]
	&= \mathbb{E}^{\mathbb{Q}} [Z_t^{-1} F(X_t)\mathbf{1}_{t < \tau}] \\
	&\leq \mathbb{E}^{\mathbb{Q}} [Z_t^{-r}]^{\frac{1}{r}} \mathbb{E}^{\mathbb{Q}} [F(X_t)^q\mathbf{1}_{t < \tau}]^{\frac{1}{q}} \\
	&\leq  \mathbb{E} [Z_t^{1-r}]^{\frac{1}{r}}  \mathbb{E} [F(W_t)^q \mathbf{1}_{t < \tau^W}]^{\frac{1}{q}},
\end{align*}
where $q^{-1}  + r^{-1} = 1$. Finally, it is a simple calculation to see that 
\[
\mathbb{E} [Z_t^{1-r}]^{\frac{1}{r}} 
	= \exp\Big\{ \frac{\alpha^2}{2}(r-1)^2 \Vert \ell' \Vert_{L^2(0,t)}^2 \Big\},
\]
and this completes the proof. 
\end{proof}

%% COR: Required integral
\begin{cor}
\label{Fixed_Cor_MainIntegralCaculation}
Suppose $\ell$ satisfies the conditions of Lemma \ref{Fixed_Lem_Girsanov} and fix $q > 1$. Then there is a constant $c_0>0$ depending only on $V_0$ and $q$ such that, for all $  u, s > 0$ and $\zeta ,\eta \geq 0$,
\[
\mathbb{E}[X_s^\zeta p_u(X_s) \mathbf{1}_{s < \tau}]
 \leq c_0 \exp\Big\{ \frac{\alpha^2 \Vert \ell' \Vert^2_{L^2(0,s)}}{2(q-1)^2}   \Big\} (u^{\frac{\beta}{2} + \eta + \frac{q\zeta}{ 2} - \frac{q-1}{2}}s^{-\eta} + u^{\eta + \frac{q\zeta}{2} - \frac{q-1}{2} } s^{\frac{\beta}{2} - \eta})^{\frac{1}{q}}.
\]
\end{cor}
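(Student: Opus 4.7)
The plan is to apply Lemma \ref{Fixed_Lem_Girsanov} with $F(x) = x^\zeta p_u(x)$, reducing the problem to estimating
\[
J := \mathbb{E}\bigl[W_s^{q\zeta} p_u(W_s)^q \mathbf{1}_{s<\tau^W}\bigr] = \int_0^\infty\!\!\int_0^\infty x^{q\zeta} p_u(x)^q G_s(x_0,x) V_0(x_0)\,dx\,dx_0
\]
by $C_{V_0,q,\zeta}\,u^{q\zeta/2-(q-1)/2}\bigl(u^{\beta/2+\eta}s^{-\eta} + u^{\eta}s^{\beta/2-\eta}\bigr)$, where $G_s(x_0,x) = p_s(x-x_0) - p_s(x+x_0)$ is the killed heat kernel on the half-line. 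Taking the $1/q$-th power will then yield the claim with the Girsanov prefactor in place.

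The first step is to absorb both $x^{q\zeta}$ and the $q$-th power of $p_u$ into a single Gaussian. Using the identity $p_u(x)^q = q^{-1/2}(2\pi u)^{-(q-1)/2}\,p_{u/q}(x)$, splitting the exponent as $-qx^2/(2u) = -qx^2/(4u)-qx^2/(4u)$, and applying the bound $\max_x x^{q\zeta}e^{-qx^2/(4u)} \leq C_{q,\zeta}\,u^{q\zeta/2}$, one obtains the pointwise inequality
\[
x^{q\zeta}p_u(x)^q \leq C_{q,\zeta}\,u^{q\zeta/2-(q-1)/2}\,p_{v}(x), \qquad v := 2u/q.
\]
Thus $J$ is controlled by a constant multiple of $u^{q\zeta/2-(q-1)/2}\,\mathbb{E}[p_v(W_s)\mathbf{1}_{s<\tau^W}]$.

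The core calculation is then explicit. Completing the square in the product $p_v(x)p_s(x\mp x_0)$ presents it as a Gaussian in $x$ with mean $\pm x_0 v/(s+v)$ and variance $vs/(s+v)$ times the common factor $p_{s+v}(x_0)$; integrating over $(0,\infty)$ and subtracting the two contributions produces
\[
\int_0^\infty p_v(x)\,G_s(x_0,x)\,dx = p_{s+v}(x_0)\bigl[2\Phi(m)-1\bigr], \qquad m := x_0\sqrt{\tfrac{v}{s(s+v)}}.
\]
Substituting $x_0 = z\sqrt{s+v}$ recasts the remaining expectation as
\[
\mathbb{E}[p_v(W_s)\mathbf{1}_{s<\tau^W}] = (2\pi)^{-1/2}\int_0^\infty V_0\bigl(z\sqrt{s+v}\bigr)\,e^{-z^2/2}\bigl[2\Phi(z\sqrt{v/s})-1\bigr]\,dz.
\]

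To close the estimate, split the $z$-integral at $z = x_\star/\sqrt{s+v}$. Below the cutoff, apply (\ref{eq:Intro_DensityAssumption}) in the form $V_0(z\sqrt{s+v}) \leq C(s+v)^{\beta/2}z^\beta$; above it, the bound $V_0 \leq D$ combined with the Gaussian weight gives a tail contribution of order $e^{-x_\star^2/(4(s+v))}$ which is absorbed into a $V_0$-dependent constant. Using the interpolation $2\Phi(y)-1 \leq C\min(|y|,1) \leq C|y|^{2\eta}$ valid for $\eta \in [0,\tfrac{1}{2}]$ and computing the finite Gaussian moment $\int_0^\infty z^{\beta+2\eta}e^{-z^2/2}dz$, the integral is bounded by $C_{V_0,q}(s+v)^{\beta/2}(v/s)^\eta$. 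Since $v \asymp u$ and $(s+v)^{\beta/2} \leq s^{\beta/2}+v^{\beta/2}$, the two stated terms emerge after multiplication by the $u^{q\zeta/2-(q-1)/2}$ prefactor. The main obstacle is the Gaussian identity producing $p_{s+v}(x_0)[2\Phi(m)-1]$: completing the square twice and tracking the normalizations is mechanical but must be done carefully, after which the proof reduces to bookkeeping of Gaussian moments and absorption of the exponentially small boundary-layer tail.
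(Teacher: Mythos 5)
Your proposal is correct, and it reaches the stated bound by a route whose core computation differs from the paper's. Both arguments begin identically (apply Lemma \ref{Fixed_Lem_Girsanov} with $F(x)=x^{\zeta}p_u(x)$, reduce to the killed Brownian expectation $J$) and both ultimately trade the linear boundary decay of the killed kernel for a factor $(u/s)^{\eta}$ via an interpolation of a ``$\wedge\,1$'' quantity with exponent $2\eta$. Where you diverge is the middle step: the paper never integrates the Gaussian convolution exactly; it keeps the double integral, uses the pointwise estimate $G_s(x_0,x)\leq \tfrac{2}{\sqrt{2\pi s}}\bigl(\tfrac{xx_0}{s}\wedge 1\bigr)e^{-(x-x_0)^2/2s}$ together with the global bound $V_0(x_0)\leq c_0x_0^{\beta}$, applies $(a+b)\wedge 1\leq (a\wedge1)+(b\wedge1)$ and $a\wedge 1\leq a^{\eta}$, and only then rescales $x\mapsto u^{1/2}x$, carrying the weight $x^{q\zeta}e^{-qx^2/2u}$ through the whole calculation. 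You instead dispose of $x^{q\zeta}p_u(x)^q$ upfront by a sup bound (which I checked: the normalisations work out to $C_{q,\zeta}\,u^{q\zeta/2-(q-1)/2}p_{2u/q}(x)$), and then evaluate $\int_0^\infty p_v(x)G_s(x_0,x)\,dx=p_{s+v}(x_0)[2\Phi(m)-1]$ exactly, reducing everything to a one--dimensional integral against $V_0$; the interpolation is then applied to $2\Phi(z\sqrt{v/s})-1\leq C\min(z\sqrt{v/s},1)$. Your version is arguably cleaner, since the $\zeta$-dependence never enters the integral and the only estimate on the kernel is an identity. Two small remarks, neither of which is a gap relative to the paper: your interpolation restricts to $\eta\leq\tfrac12$, but the paper's own step $b\wedge1\leq b^{2\eta}$ requires the same restriction despite the statement allowing all $\eta\geq0$ (in the applications $\eta$ is taken small, so this is harmless); and your splitting of the $z$-integral at $x_\star/\sqrt{s+v}$ is unnecessary --- the hypothesis (\ref{eq:Intro_DensityAssumption}) gives $V_0(x)\leq \max(C,Dx_\star^{-\beta})x^{\beta}$ for all $x>0$, which is what the paper uses and which avoids having to check that the tail term $e^{-x_\star^2/4(s+v)}$ is dominated by $(s+v)^{\beta/2}$ uniformly in $s+v$ (it is, but it requires a word of justification for large $s+v$).
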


\begin{proof}
We first prove the bound for Brownian motion, and then we appeal to the previous lemma for the full result. From the assumption (\ref{eq:Intro_DensityAssumption}) we can certainly find a constant $c_0 > 0$ such that $V_0(x) \leq c_0 x^\beta$ for all $x > 0$, and so 
\begin{equation*}
\mathbb{E}[W_s^{q\zeta} p_u(W_s)^q \mathbf{1}_{s < \tau^W}]
	\leq \frac{c_0}{u^{\frac{q}{2}}} \int^\infty_0 \int^\infty_0 x^{q\zeta} e^{-\frac{qx^2}{2u}} G_s(x_0, x) x_0^\beta dx_0 dx,
\end{equation*}
where from here on we absorb the relevant numerical constants into $c_0$. Using the estimate 
\[
G_s(x_0, x) \leq \frac{2}{(2\pi s)^{1/2}}\Big(\frac{xx_0}{s} \wedge 1 \Big) e^{-\frac{(x-x_0)^2}{2s}}
\]
together with the substitution $x_0 \mapsto x_0 - x$ gives 
\begin{equation}
\label{eq:Fixed_RequiredIntergal_I}
\mathbb{E}[W_s^{q\zeta} p_u(W_s)^q \mathbf{1}_{s < \tau^W}]
	\leq \frac{c_0}{u^{\frac{q}{2}}s^{\frac{1}{2}}} \int^\infty_0 x^{q\zeta} e^{-\frac{qx^2}{2u}} \int_{-x}^\infty (x + x_0)^\beta    \Big( \frac{x(x+x_0)}{s} \wedge 1 \Big) e^{-\frac{x_0^2}{ 2s}} dx_0 dx.
\end{equation}

Now notice that for generic $a, b \geq 0$ we have the inequality
\[
(a + b) \wedge 1 \leq (a \wedge 1) + (b \wedge 1).
\]
Hence it holds for $x \geq 0$, $x_0 \geq -x$ and $\eta \in [0,1]$ that 
\[
\frac{x(x+x_0)}{s} \wedge 1 
	\leq \Big( \frac{x^2}{s} \wedge 1 \Big) + \Big( \frac{|x_0x|}{s} \wedge 1 \Big)
	\leq \Big( \frac{x^2}{s} \Big)^\eta + \Big( \frac{|x_0x|}{s} \Big)^{2\eta}.
\]
Combining this inequality with $(x + x_0)^\beta \leq c_1( |x|^{\beta} + |x_0|^{\beta} )$, for $c_1 > 0$ a constant depending only on $\beta$, allows us to bound the inner integral in (\ref{eq:Fixed_RequiredIntergal_I}) by
\begin{align*}
&\int_{-x}^\infty (|x|^\beta+|x_0|^\beta)\Big( \Big( \frac{x^2}{s} \Big)^\eta + \Big( \frac{|x_0x|}{s} \Big)^{2\eta}\Big) e^{-\frac{x_0^2}{2s}} dx_0 \\
&\qquad \qquad
	\leq c_0 s^{\frac{1}{2}} \int_{-\infty }^\infty   (|x|^\beta+ s^{\beta/2}|x_0|^\beta)\Big( \Big( \frac{x^2}{s} \Big)^\eta + \Big( \frac{|x_0x|}{s^{1/2}} \Big)^{2\eta}\Big) e^{-\frac{x_0^2}{ 2}} dx_0 \\
&\qquad \qquad
	\leq c_0 (|x|^{\beta +2 \eta} s^{\frac{1}{2} - \eta}  + |x|^{2\eta } s^{\frac{1}{2} + \frac{\beta}{2} - \eta}).
\end{align*}
Setting this into (\ref{eq:Fixed_RequiredIntergal_I}) and making the change of variables $x \mapsto u^{1/2}x$ we obtain
\begin{align*}
\mathbb{E}[W_s^{q\zeta} p_u(W_s)^q \mathbf{1}_{s < \tau^W}]
	& \leq  c_0 (u^{\frac{\beta}{2} + \eta + \frac{q\zeta}{2} -\frac{q-1}{2}}s^{-\eta} + u^{\eta + \frac{q\zeta}{2} -\frac{q-1}{2}} s^{\frac{\beta}{2} - \eta}).
\end{align*}

Finally, the result for $X$ in place of $W$ follows by invoking Lemma \ref{Fixed_Lem_Girsanov}.  
\end{proof}

%% LEM: I_2
\begin{lem}[$I_2$]
\label{Fixed_Lem_I2}
If $\ell \in \mathcal{S}(\gamma, A, t_0)$, then there exist constants $c_1 > 0$, $\theta_1 > 0$ and $\theta_2 \in [0,1)$ (depending on $\ell$) such that 
\[
0 \leq I_2 \leq c_1 \delta^{1 + \theta_1} t^{-\frac{1}{2}\theta_2 },
	\qquad \textrm{for every } t \in (0,t_0).
\]
\end{lem}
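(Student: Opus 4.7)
The plan is to bound the expectation inside $I_2$ via Corollary \ref{Fixed_Cor_MainIntegralCaculation} with $\zeta=0$ and an appropriately chosen pair $(q,\eta)$, and then to integrate in $s$ over the short window $[t,t+\delta]$. Because $\ell\in\mathcal{S}(\gamma,A,t_0)$ with $\gamma<\tfrac12$, we have $\Vert \ell'\Vert^2_{L^2(0,s)}\leq A^2 s^{1-2\gamma}/(1-2\gamma)$, so $\ell\in H^1(0,t_0)$ and the Girsanov factor from Corollary \ref{Fixed_Cor_MainIntegralCaculation} is uniformly bounded by a constant $c_\ell$ depending on $\alpha$, $A$, $\gamma$, $t_0$ and $q$.

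First I would fix $q>1$ and $\eta\in((q-1)/2,1/2]$, noting that this interval is non-empty when $q<2$. Applying Corollary \ref{Fixed_Cor_MainIntegralCaculation} with $\zeta=0$, and then the sub-additivity $(a+b)^{1/q}\leq a^{1/q}+b^{1/q}$ (valid since $1/q<1$), yields
\begin{equation*}
\mathbb{E}[p_u(X_s)\mathbf{1}_{s<\tau}] \leq c_\ell\bigl(u^{(\beta/2+\eta-(q-1)/2)/q}\,s^{-\eta/q}+u^{(\eta-(q-1)/2)/q}\,s^{(\beta/2-\eta)/q}\bigr).
\end{equation*}
Crucially, the choice $\eta>(q-1)/2$ makes \emph{both} exponents of $u$ strictly positive. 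Substituting this bound into $I_2$, using $\ell'_s\leq As^{-\gamma}$ and that on $[t,t+\delta]$ (for $\delta\leq t$, say) we have $s\asymp t$, the $s$-factors can be absorbed into powers of $t$ while the $u$-integrals, after the change of variable $u=t+\delta-s$, reduce to $\int_0^\delta u^\sigma\,du$ with $\sigma>0$. This produces an estimate of the shape
\begin{equation*}
I_2\leq c'_\ell\bigl[\,t^{-\gamma-\eta/q}\,\delta^{1+(\beta/2+\eta-(q-1)/2)/q} + t^{-\gamma+(\beta/2-\eta)/q}\,\delta^{1+(\eta-(q-1)/2)/q}\bigr].
\end{equation*}

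Setting $\theta_1:=(\eta-(q-1)/2)/q>0$ gives a $\delta$-exponent strictly exceeding $1$ in both summands, and taking $\theta_2/2$ equal to the larger of the two $t$-decay rates completes the estimate provided we can arrange $\theta_2<1$. The main obstacle is this last constraint: ensuring $-\gamma-\eta/q>-\tfrac12$ requires $\eta/q<\tfrac12-\gamma$, which combined with $\eta>(q-1)/2$ forces $q<1/(2\gamma)$. Since $\gamma<\tfrac12$, the admissible window for $(q,\eta)$ is always non-empty — e.g., taking $q$ just above $1$ and $\eta$ just above $(q-1)/2$ works. The auxiliary case $\delta>t$ is absorbed into larger constants, since the tight equivalence $s\asymp t$ then fails but $s$ is still bounded below by a constant multiple of $t$, with any surplus powers of $(t+\delta)$ re-balanced into the $\delta^{1+\theta_1}$ factor by enlarging $\theta_2$ within $[0,1)$.
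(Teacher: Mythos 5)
Your proof is correct and follows essentially the same route as the paper: apply Corollary \ref{Fixed_Cor_MainIntegralCaculation} with $\zeta=0$, bound $\ell'_s\leq As^{-\gamma}\leq At^{-\gamma}$, integrate the resulting power of $u=t+\delta-s$ over $[0,\delta]$, and choose $q$ close to $1$ with $\eta$ just above $(q-1)/2$ so that the $\delta$-exponent exceeds $1$ while $\gamma+\eta/q<\tfrac12$. The only (harmless) difference is that you keep both terms of the corollary via subadditivity of $x\mapsto x^{1/q}$, whereas the paper collapses them by taking worst-case exponents; your worry about $\delta>t$ is also unnecessary, since the relevant $s$-exponents are negative and only the lower bound $s\geq t$ is ever used.
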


\begin{rem}
Formally, the integrand in $I_2$ converges to a Dirac mass at zero, evaluated at $X_s$ and multiplied by $\mathbf{1}_{s < \tau}$. We expect such an expression to vanish since $s < \tau$ implies $X_s > 0$. While this argument is not rigorous, it suggests why $I_2 = o(\delta)$ and hence why this term will not contribute to the bound on $\Gamma[\ell]'$ in the proof of Theorem \ref{Intro_Thm_FixedPoint}. 
\end{rem}

\begin{proof}[Proof of Lemma \ref{Fixed_Lem_I2}]
We begin by applying the bound $\ell'_s \leq A s^{-\gamma}$ to get
\begin{align*}
\label{eq:Fixed_I2proof_I}
I_2 = 2\alpha\int^{t+\delta}_t \mathbb{E}[p_{t + \delta - s}(X_s) \mathbf{1}_{s<\tau}] d\ell_s
	&=2\alpha \int^{t+\delta}_t \mathbb{E}[p_{t + \delta - s}(X_s) \mathbf{1}_{s<\tau}] \ell_s' ds \nonumber \\
	& \leq 2\alpha A t^{-\gamma} \int^{t + \delta}_t \mathbb{E}[p_{t + \delta - s}(X_s) \mathbf{1}_{s<\tau}] ds.
\end{align*}
Taking $u = t + \delta - s$ and $\zeta = 0$ in Corollary \ref{Fixed_Cor_MainIntegralCaculation}, and bounding with the worst-case exponents, gives 
\[
I_2 \leq c_2 t^{-\gamma - \frac{\eta}{q}} \int^{t+\delta}_t (t+ \delta -s)^{\frac{\eta}{q} - \frac{1}{2}(1 - \frac{1}{q}) } ds
	= c_3 \delta^{1 + \frac{\eta}{q} - \frac{1}{2}(1 - \frac{1}{q})} t^{-\gamma - \frac{\eta}{q}},
\]
where $c_2$ and $c_3$ are constants depending on $\ell$. The result is now complete by choosing $\eta$ and $q$ so that $\gamma + \tfrac{\eta}{q} < \tfrac{1}{2}$ (possible since $\gamma < \tfrac{1}{2}$) and $\frac{\eta}{q} - \frac{1}{2}(1 - \frac{1}{q}) > 0$ (take $q$ sufficiently close to 1 and maintain constant ratio $\eta/ q$).
\end{proof}

The third term, $I_3$, is not so simple to control.

%% LEM: 
\begin{lem}[$I_3$]
\label{Fixed_Lem_I3}
If $\gamma < \tfrac{1}{2}$ and $\ell \in \mathcal{S}(\gamma, A, t_0)$ then there exist constants $c_0 >0$, $c_1 > 0$ and $\theta_3 > 0$ (depending only on $V_0$ and $\gamma$) such that
\[
|I_3| \leq c_0 \alpha A \delta \exp\{ c_1 \alpha^2 A^2 t_0^{1 - 2\gamma} \} t^{-\frac{1-\beta}{2} + \theta_3},
	\qquad \textrm{for every } t \in (0,t_0).
\]
\end{lem}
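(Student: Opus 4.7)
The plan is to exploit the heat equation $\partial_u p_u = \tfrac{1}{2}\partial_{xx}p_u$ to extract a factor of $\delta$, and then to apply Corollary~\ref{Fixed_Cor_MainIntegralCaculation} to each of the two resulting expectations, balancing the exponents so that the singularity at $s=t$ is integrable while the overall $t$-exponent beats $-\tfrac{1-\beta}{2}$ by a strictly positive amount.

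First I would write
\[
p_{t+\delta-s}(x) - p_{t-s}(x) = \tfrac{1}{2}\int_{0}^{\delta}\partial_{xx}p_{t+u-s}(x)\,du,
\]
and use the pointwise identity $\partial_{xx}p_v(x) = (x^2-v)v^{-2}p_v(x)$, which yields $|\partial_{xx}p_v(x)| \leq v^{-1}p_v(x) + v^{-2}x^{2}p_v(x)$. Bringing the expectation inside, using Fubini to swap the $u$- and $s$-integrals, and applying the bound $\ell'_s \leq A s^{-\gamma}$, I obtain
\[
|I_3| \leq \alpha A\int_{0}^{\delta}\int_{0}^{t}\mathbb{E}\Bigl[\Bigl(\tfrac{1}{t+u-s}+\tfrac{X_s^{2}}{(t+u-s)^{2}}\Bigr)p_{t+u-s}(X_s)\mathbf{1}_{s<\tau}\Bigr]\, s^{-\gamma}\,ds\,du.
\]

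Next, I would apply Corollary~\ref{Fixed_Cor_MainIntegralCaculation} with $\zeta=0$ for the first summand and $\zeta=2$ for the second, using some fixed $q>1$ close to $1$ and $\eta \in (0,1)$ to be chosen. Since $\gamma<\tfrac{1}{2}$ we have the uniform bound $\|\ell'\|_{L^{2}(0,s)}^{2} \leq A^{2}t_{0}^{1-2\gamma}/(1-2\gamma)$, so the Girsanov factor in the Corollary factors out as $\exp\{c_{1}\alpha^{2}A^{2}t_{0}^{1-2\gamma}\}$ (for a suitable $c_1$ depending on $q$). After substituting, the inner expectations are dominated by a sum of terms of the form
\[
(t+u-s)^{-\kappa}\bigl(s^{-\eta/q} + s^{\beta/(2q)-\eta/q}\bigr),
\]
where the exponents $\kappa$ come out to $1-\tfrac{1}{q}(\tfrac{\beta}{2}+\eta-\tfrac{q-1}{2})$ in the worst case (corresponding to $\zeta=0$, first parenthesis). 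Choosing $q$ close enough to $1$ and $\eta$ small guarantees $\kappa<1$, so the $s$-integral $\int_{0}^{t}(t+u-s)^{-\kappa}s^{-\gamma-\eta/q}\,ds$ is finite and of order $(t+u)^{1-\kappa-\gamma-\eta/q}\leq t^{1-\kappa-\gamma-\eta/q}$, uniformly in $u\in(0,\delta)$.

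The main obstacle is the bookkeeping for the exponents: to match the statement I need (i)~the $u$-integral to contribute exactly one factor of $\delta$ (which works because the $s$-integral is already bounded uniformly in $u\leq\delta$, so the outer $\int_{0}^{\delta}du$ simply yields $\delta$); and (ii)~the final exponent on $t$ to be strictly larger than $-\tfrac{1-\beta}{2}$. Collecting exponents gives an overall power of $t$ equal to $-\tfrac{1-\beta}{2} + \theta_{3}$, where
\[
\theta_{3} = \bigl(\tfrac{1}{2}-\gamma\bigr)\;-\;\tfrac{\eta}{q}\;-\;(q-1)\cdot(\text{bounded expression in }\beta),
\]
so by first fixing $\eta$ much smaller than $\tfrac{1}{2}-\gamma$ and then taking $q$ sufficiently close to $1$, we can ensure $\theta_{3}>0$. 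The analogous check must be repeated for the three other combinations of $(\zeta,\text{parenthesis choice})$; each of them yields an exponent no worse than in the case just treated, because either $\zeta=2$ supplies an extra $u^{1}$ (killing the worst part of the $(t+u-s)^{-2}$ singularity) or the second parenthesis replaces $s^{-\eta/q}$ by the milder $s^{\beta/(2q)-\eta/q}$. Finally, assembling the four contributions and pulling the (uniform) Girsanov factor out of the integrals yields exactly the claimed bound with constants $c_{0},c_{1}>0$ and $\theta_{3}>0$ depending only on $V_{0}$ and $\gamma$.
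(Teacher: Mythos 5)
Your proof follows essentially the same route as the paper's: the fundamental theorem of calculus in the time variable of the heat kernel, the pointwise bound on $\partial_u p_u$, Corollary \ref{Fixed_Cor_MainIntegralCaculation} with $\zeta\in\{0,2\}$, the uniform bound $\Vert \ell'\Vert^2_{L^2(0,t_0)}\leq A^2 t_0^{1-2\gamma}/(1-2\gamma)$, and the same balancing with $q$ close to $1$ and $\eta$ small. The only discrepancy is cosmetic: in your displayed formula for $\theta_3$ the term $-\eta/q$ should not appear, since (as your own computation of the $s$-integral already shows) the $\eta$ contributions from the two exponents cancel, exactly as in the paper.
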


\begin{proof}
We begin by applying the fundamental theorem of calculus to see that
\begin{align*}
p_{t+\delta-s}(X_s) - p_{t-s}(X_s)
	&= \int^{\delta}_{0} \partial_u p_{t-s+u}(X_s) du \\
	&= \tfrac{1}{2} \int^{\delta}_{0} ( X_s^2 (t-s+u)^{-2} - (t-s+u)^{-1}) p_{t-s+u}(X_s) du
\end{align*}
and therefore
\[
|p_{t+\delta-s}(X_s) - p_{t-s}(X_s)|
	\leq  \frac{1}{2}  \int^{\delta}_0 ( X_s^2 (t-s)^{-2} + (t-s)^{-1}) p_{t-s+u}(X_s) du.
\]
Taking an expectation and using Corollary \ref{Fixed_Cor_MainIntegralCaculation} gives
\begin{align}
\mathbb{E}|&p_{t+\delta-s}(X_s) - p_{t-s}(X_s)|  \nonumber \\
	&\leq c_0 \delta \exp\Big\{ \frac{\alpha^2 \Vert \ell' \Vert^2_{L^2(0,t_0)}}{2(q-1)^2}  \Big\} \Big( 
		(t-s)^{\frac{\beta}{2} + \eta - \frac{q-1}{2} - 1} s^{-\eta} + (t-s)^{\eta - \frac{q-1}{2}- 1} s^{\frac{\beta}{2} - \eta} \Big)^{\frac{1}{q}}, \label{eq:epbd}
\end{align}
where $c_0$ depends only on $V_0$ and the choice of $q$ and $\eta$. 

By applying the bound \eqref{eq:epbd} to the expression for $I_3$, we get
\begin{multline*}
|I_3|
	\leq 2 A c_0 \alpha \delta \exp\Big\{ \frac{\alpha^2 \Vert \ell' \Vert^2_{L^2(0,t_0)}}{2(q-1)^2}  \Big\} \Big(J( \tfrac{1}{q}(\tfrac{\beta}{2} + \eta - \tfrac{q-1}{2} - 1), -\tfrac{\eta}{q} - \gamma )\\
	+ 
	J(\tfrac{1}{q}(\eta - \tfrac{q-1}{2} - 1), \tfrac{\beta}{2}-\tfrac{\eta}{q} - \gamma )
	\Big),
\end{multline*}
where $J$ is defined to be
\begin{equation}
\label{eq:Fixed_DefOfJIntegral}
J(a,b) := \int^t_0 (t-s)^a s^b ds
	= C t^{1 + a + b},
\end{equation}
for a constant $C = C(a,b) > 0$, provided $a > -1$ and $b > -1$. To keep the above exponents bigger than $-1$, we need to select $\eta$ and $q$ so that $\frac{\eta}{q} < 1 - \gamma$,
whereby we obtain
\begin{align*}
|I_3|
	&\leq A c_0 \alpha \delta \exp\Big\{ \frac{\alpha^2 \Vert \ell' \Vert^2_{L^2(0,t_0)}}{2(q-1)^2}  \Big\} t^{\frac{\beta}{2q} - \frac{1}{2}(\frac{1}{q} - 1) - \gamma} \\
	&= A c_0 \alpha \delta \exp\Big\{ \frac{\alpha^2 \Vert \ell' \Vert^2_{L^2(0,t_0)}}{2(q-1)^2}  \Big\} t^{-\frac{1-\beta}{2} + \frac{1-\beta}{2}(1 - \frac{1}{q}) + \frac{1}{2} - \gamma},
\end{align*}
where we have absorbed numerical constants into $c_0$. Since $\gamma < \tfrac{1}{2}$, we can take $q$ sufficiently close to 1 so that we have the required exponent. The proof is then complete by noting that
\[
\Vert \ell' \Vert^2_{L^2(0,t_0)}
	\leq \int^{t_0}_0 A^2 s^{-2\gamma} ds = \frac{A^2}{1 - 2\gamma} t_0^{1 - 2 \gamma}. 
\] 
\end{proof}

%% PROP:
\begin{prop}[Stability of $\Gamma$]
\label{Fixed_Prop_Stability}
There exists a constant $K > 0$ depending only on $C$ and $x_\star$ such that for every $\varepsilon > 0$ there exists $t_0 > 0$ for which
\[
\Gamma : \mathcal{S}(\tfrac{1-\beta}{2}, K + \varepsilon, t_0) \to \mathcal{S}(\tfrac{1-\beta}{2}, K + \varepsilon, t_0),
\]
where $t_0$ also depends on the model parameters. 
\end{prop}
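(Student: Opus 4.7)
The plan is to apply the difference quotient method to the identity (\ref{eq:Fixed_SplitUp}), exploiting Lemmas \ref{Fixed_Lem_BoundTheBits}, \ref{Fixed_Lem_I2} and \ref{Fixed_Lem_I3} in tandem to obtain a pointwise a.e.~upper bound on $\Gamma[\ell]_t'$ that forces $\Gamma[\ell]$ back into the set $\mathcal{S}(\tfrac{1-\beta}{2}, K+\varepsilon, t_0)$. The constant $K$ is taken directly from Lemma \ref{Fixed_Lem_BoundTheBits}; the perturbation $\varepsilon$ is the slack needed to absorb the remaining contributions by shrinking $t_0$.

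Concretely, I fix $\ell \in \mathcal{S}(\tfrac{1-\beta}{2}, A, t_0)$, divide $\Gamma[\ell]_{t+\delta} - \Gamma[\ell]_t = I_1 + I_2 + I_3$ by $\delta$, and send $\delta \downarrow 0$. Lemma \ref{Fixed_Lem_BoundTheBits} gives $\delta^{-1}I_1 \leq K t^{-(1-\beta)/2}$ with $K = K(C, x_\star)$ \emph{independent of $A$, $\alpha$ and $D$}; Lemma \ref{Fixed_Lem_I2} gives $\delta^{-1}I_2 = O(\delta^{\theta_1})$, so this term drops out in the limit; and Lemma \ref{Fixed_Lem_I3} (with $\gamma = \tfrac{1-\beta}{2}$, so that $1 - 2\gamma = \beta$) gives
\[
\delta^{-1}|I_3|
  \leq c_0 \alpha A \exp\!\bigl\{c_1 \alpha^2 A^2 t_0^{\beta}\bigr\}\, t^{-(1-\beta)/2 + \theta_3}
  \leq c_0 \alpha A \exp\!\bigl\{c_1 \alpha^2 A^2 t_0^{\beta}\bigr\}\, t_0^{\theta_3}\, t^{-(1-\beta)/2},
\]
for all $t \leq t_0$. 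Combining these bounds and passing to the limit yields the pointwise estimate
\[
\Gamma[\ell]_t' \;\leq\; \Bigl(K + c_0 \alpha A \exp\!\bigl\{c_1 \alpha^2 A^2 t_0^{\beta}\bigr\}\, t_0^{\theta_3}\Bigr) t^{-(1-\beta)/2}
\qquad \text{for a.e.~} t \in (0, t_0).
\]
I then fix $K$ once and for all, set $A := K + \varepsilon$, and note that the quantity $R(t_0) := c_0 \alpha(K+\varepsilon)\exp\{c_1\alpha^2(K+\varepsilon)^2 t_0^{\beta}\} t_0^{\theta_3}$ tends to $0$ as $t_0 \downarrow 0$, since $\beta, \theta_3 > 0$ and the exponential tends to $1$. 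Choosing $t_0$ small enough that $R(t_0) \leq \varepsilon$ and that the hypotheses of Lemmas \ref{Fixed_Lem_BoundTheBits} and \ref{Fixed_Lem_I3} are in force yields the required bound. Since $\Gamma[\ell]$ is automatically c\`adl\`ag with $\Gamma[\ell]_0 = 0$ (it is the c.d.f.~of a hitting time of a process started strictly inside the positive half-line), and since $t^{-(1-\beta)/2} \in L^2(0, t_0)$ for $\beta > 0$ so that the pointwise bound upgrades to $\Gamma[\ell] \in H^1(0, t_0)$, we conclude $\Gamma[\ell] \in \mathcal{S}(\tfrac{1-\beta}{2}, K+\varepsilon, t_0)$.

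The delicate point, and what I expect to be the real structural obstacle, is the separation of scales between $K$ and the $A$-dependent contributions. If the bound on $I_1$ had contained any additional $A$- or $\alpha$-dependence, or if the exponent $\theta_3$ in Lemma \ref{Fixed_Lem_I3} had been zero, then the Girsanov exponential $\exp\{c_1 \alpha^2 A^2 t_0^{\beta}\}$ would feed back into a strict enlargement of the stabilising radius and no choice of $A$ of the form $K + \varepsilon$ could close. This is precisely why Lemma \ref{Fixed_Lem_BoundTheBits} was engineered so that $K$ depends only on $C$ and $x_\star$ and not on $D$, and why the exponent bookkeeping in Lemma \ref{Fixed_Lem_I3} was pushed through with strict inequalities. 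With those two structural facts in hand, the present proposition reduces to the computation above.
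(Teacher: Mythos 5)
Your proposal is correct and follows essentially the same route as the paper: the same decomposition $I_1+I_2+I_3$, the same three lemmas, $K$ taken from Lemma \ref{Fixed_Lem_BoundTheBits}, and $t_0$ shrunk so that the $I_3$ contribution is absorbed into the slack $\varepsilon$. The one imprecision is your closing sentence: $\Gamma[\ell]\in H^1(0,t_0)$ follows from the uniform-in-$\delta$ $L^2$ bound on the difference quotients (which your displayed estimates already provide) via the difference-quotient criterion \cite[Sect.~5.8.2, Thm.~3]{evans_2010}, not from the a.e.\ pointwise bound on the derivative alone --- an increasing function with a.e.\ derivative in $L^2$ need not be absolutely continuous.
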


\begin{proof}
Take $K$ and $t_0$ as in the conclusion of Lemma \ref{Fixed_Lem_BoundTheBits}. We will decrease the value of $t_0$ throughout the proof, but this is the $K$ in the statement of the result.

First we check that $\Gamma : \mathcal{S}(\tfrac{1-\beta}{2}, K + \varepsilon, t_0) \to H^1(0,t_0)$. Combine Lemmas \ref{Fixed_Lem_BoundTheBits}, \ref{Fixed_Lem_I2} and \ref{Fixed_Lem_I3} to get
\begin{equation}
\label{eq:Fixed_Prop_Stability_I}
\Big|\frac{\Gamma[\ell]_{t+\delta} - \Gamma[\ell]_{t}}{\delta}\Big|
	\leq Kt^{-\frac{1-\beta}{2}} + c_2 \delta^{\theta_1} t^{-\frac{1}{2} \theta_2} + c_0 \alpha (K+\varepsilon) \exp\{ c_1 \alpha^2 (K+\varepsilon)^2 t_0^{\beta} \} t^{-\frac{1-\beta}{2} + \theta_3},
\end{equation}
and so we have
\[
\limsup_{\delta \to 0}\int^{t_0}_0 \Big|\frac{\Gamma[\ell]_{t+\delta} - \Gamma[\ell]_{t}}{\delta}\Big|^2 dt < \infty.
\]
By the method of difference quotients \cite[Sect.~5.8.2, Thm.~3]{evans_2010} we conclude that $\Gamma[\ell]$ is in $H^1(0,t_0)$, as required. 

Taking a pointwise limit in (\ref{eq:Fixed_Prop_Stability_I}) gives
\begin{align*}
|\Gamma[\ell]_t'| 
	&\leq Kt^{-\frac{1-\beta}{2}}  + c_0 \alpha (K+\varepsilon) \exp\{ c_1 \alpha^2 (K+\varepsilon)^2 t_0^{\beta} \} t^{-\frac{1-\beta}{2} + \theta_3} \\
	&\leq Kt^{-\frac{1-\beta}{2}}  + c_0 \alpha (K+\varepsilon) \exp\{ c_1 \alpha^2 (K+\varepsilon)^2 t_0^{\beta} \}t_0^{\theta_3} t^{-\frac{1-\beta}{2}}.
\end{align*}
With $\varepsilon > 0$ fixed, we can now take $t_0 > 0$ sufficiently small (since the constants in the bound in Lemma \ref{Fixed_Lem_I3} do not depend on the value of $t_0$) so that 
\[
c_0 \alpha (K+\varepsilon) \exp\{ c_1 \alpha^2 (K+\varepsilon)^2 t_0^{\beta} \}t_0^{\theta_3} < \varepsilon,
\]
which completes the proof.
\end{proof}

With Proposition \ref{Fixed_Prop_Stability} now in place, it remains to show that we can find a fixed point and deduce that it lives in one of the sets $\mathcal{S}(\frac{1-\beta}{2}, K+\varepsilon, t_0)$.

\begin{proof}[Proof of Theorem \ref{Intro_Thm_FixedPoint}]
Take $K$, $\varepsilon$ and $t_0$ as in the conclusion of Proposition \ref{Fixed_Prop_Stability}. Take $t_0$ sufficiently small so that the first half of Theorem \ref{Intro_Thm_Minimality} holds. Define the sequence
\[
\ell^{(0)} := 0,
	\qquad \ell^{(n)} := \Gamma[\ell^{(n-1)}],
		\qquad \textrm{for every }n \geq 1. 
\] 
By the Banach fixed point theorem, we know that there exists a limit point $\ell^{(n)} \to L$ in $L^\infty(0,t_0)$, as $n \to \infty$, and that $\Gamma[L] = L$. So $L$ solves (\ref{eq:Intro_MVproblem}) on $[0,t_0)$.

To see $L \in \mathcal{S}(\tfrac{1-\beta}{2}, K+\varepsilon, t_0)$, notice that, since $\Gamma[\ell]$ is bounded by 1, dominated convergence gives
\begin{equation}
\label{eq:Fixed_MainProof_I}
\int^{t_0-\delta}_{0} \Big|  \frac{L_{t+\delta} - L_t}{\delta}\Big|^2 dt
	\leq \limsup_{n \to \infty} \int^{t_0-\delta}_0 \Big|  \frac{\ell^{(n)}_{t+\delta} - \ell^{(n)}_t}{\delta}\Big|^2 dt,
		\qquad \textrm{for every } \delta > 0.
\end{equation}
Proposition \ref{Fixed_Prop_Stability} ensures $\ell^{(n)} \in \mathcal{S}(\tfrac{1-\beta}{2}, K+\varepsilon, t_0)$, so we have the estimate
\[
|\ell^{(n)}_{t + \delta} - \ell^{(n)}_t|
	=\Big| \int^{t+\delta}_t (\ell^{(n)}_s)' ds \Big|
	\leq (K+\varepsilon) t^{-\frac{1-\beta}{2}} \delta. 
\]
Using this in (\ref{eq:Fixed_MainProof_I}) yields
\[
\int^{t_0-\delta}_{0} \Big|  \frac{L_{t+\delta} - L_t}{\delta}\Big|^2 dt
	\leq (K+\varepsilon) \int^{t_0}_0   t^{-\frac{1-\beta}{2}} dt < \infty,
\]
and hence the method of difference quotients \cite[Sect.~5.8.2, Thm.~3]{evans_2010} gives that $L$ is in $H^1(0,t_0)$. Moreover, it holds pointwise almost everywhere in $(0,t_0)$ that
\[
0 \leq \frac{L_{t+\delta} - L_t}{\delta} 
	=\lim_{n \to \infty} \frac{\ell^{(n)}_{t+\delta} - \ell^{(n)}_t}{\delta} 
	\leq (K+\varepsilon) t^{-\frac{1-\beta}{2}},
\]
so sending $\delta \to 0$ gives that $L \in \mathcal{S}(\frac{1-\beta}{2}, K+\varepsilon, t_0)$, as required.

The uniqueness statement follows immediately by applying the second half of Theorem \ref{Intro_Thm_Minimality}.
\end{proof}

%%%%%%%%%%%%%%%%%%%%%%%%%%%%%%%%%%%%%%%%%%%%%%%%%%%%%
%%%%%%%%%%%%%%%%%%%%%%%%%%%%%%%%%%%%%%%%%%%%%%%%%%%%%
%%%%%%%%%%%%     5. BOOTSTRAPPING          %%%%%%%%%%
%%%%%%%%%%%%%%%%%%%%%%%%%%%%%%%%%%%%%%%%%%%%%%%%%%%%%
%%%%%%%%%%%%%%%%%%%%%%%%%%%%%%%%%%%%%%%%%%%%%%%%%%%%%
\section{Bootstrapping and full uniqueness up to explosion time --- proof of Theorem \ref{Intro_Thm_MainUniqueness}}
\label{Sect_Bootstrap}

Here we show how the solutions from the fixed point argument in the previous section can be extended up to the first time their $H^1$ norm explodes. (Trivially, this time occurs before or at the first jump time.) The key to this bootstrapping method is to notice that if $\ell \in H^1(0,t_0)$, for some $t_0$, then necessarily $V_{t_0-}(x) = O(x^{\beta })$ (Lemma \ref{Boot_Lem_IC_recovery}). Not only does this show that $t_0$ cannot be a jump time, it also allows us to apply the fixed point argument once more, thus extending the solution to $H^1(0,t_1)$ for some $t_1 > t_0$ (Corollary \ref{Boot_Cor_ExtendSolutionOneStep}). The first half of Theorem \ref{Intro_Thm_MainUniqueness} then follows by iterating this argument (Corollary \ref{Boot_Cor_Bootstrap}). 

After the above, we proceed to prove the second half of Theorem \ref{Intro_Thm_MainUniqueness}. The idea is to consider modified initial conditions for which a fixed portion of the initial density is erased and added to the initial value of the loss process (Definition \ref{Boot_Def_DeletedSolution}). An argument that shows solutions cannot overlap (Lemma \ref{Boot_Lem_Monotonicity}) then allows us to trap any general c\`{a}dl\`{a}g solution of (\ref{eq:Intro_MVproblem}) between the modified solutions and the minimal differentiable solution. The proof concludes by showing that the size of this trapping envelope shrinks to zero as the size of the initial modification is taken to zero (Lemma \ref{Boot_Lem_Convergence}), thus forcing the minimal solution and the general c\`{a}dl\`{a}g solution to be equal (see Figure \ref{fig:Boot_trapping}).

%%%%%%%%%%%%%%%%%%%%%%   BOOTSTRAP    %%%%%%%%%%%%%%%%%%%%%%%%%%%%
\subsection{Bootstrap}
\label{SubSect_Bootstrap}

%% LEM : Recovery of initial condition
\begin{lem}[Recovery of initial exponents]
\label{Boot_Lem_IC_recovery}
Suppose that $L \in H^1(0,t_0)$ solves (\ref{eq:Intro_MVproblem}) for some $t_0 > 0$, with the usual assumption (\ref{eq:Intro_DensityAssumption}) on $V_0$. Then there exists further constants $C', D', x_\star'  > 0$ such that
\[
V_{t_0-}(x) \leq C' x^{\beta} \mathbf{1}_{ x < x_\star' } + D' \mathbf{1}_{ x \geq x_\star' },
	\qquad \textrm{for every } x > 0. 
\]
\end{lem}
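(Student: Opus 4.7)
The plan is to establish the lemma via a Girsanov change of measure that reduces the drifted killed diffusion to a standard Brownian motion, combined with a bridge decomposition that gives pointwise control of the transition density. The starting point is $V_{t_0-}(x) = \int_0^\infty V_0(x_0) q_{t_0}(x_0, x) \, dx_0$, where $q_{t_0}(x_0, x)$ is the transition density at $x$ of $X$ started from $x_0$ and restricted to the event $\{t_0 < \tau\}$. My goal is to prove a pointwise bound of the form $q_{t_0}(x_0, x) \leq K(x_0) \, x^\beta$ for $x$ small, after which integration against $V_0$ will give the desired estimate.

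The key step is to decompose $q_{t_0}(x_0, x) = p_{t_0}(x - x_0) \cdot \mathbb{E}^\mathbb{Q}[Z_{t_0}^{-1} \mathbf{1}_{t_0 < \tau^W} \mid X_0 = x_0, X_{t_0} = x]$, where, as in the derivation of Lemma \ref{Fixed_Lem_Girsanov}, $\mathbb{Q}$ denotes the measure under which $X - X_0$ is a standard Brownian motion. By H\"older's inequality with conjugate exponents $p, q > 1$, this conditional expectation is bounded by the product of $\mathbb{E}^\mathbb{Q}[Z_{t_0}^{-p} \mid X_0, X_{t_0}]^{1/p}$ and $\mathbb{P}^\mathbb{Q}(t_0 < \tau^W \mid X_0, X_{t_0})^{1/q}$. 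The first factor is controlled as follows: conditional on $X_0 = x_0$ and $X_{t_0} = x$, the Wiener integral $\int_0^{t_0} L'_s \, dX_s$ is Gaussian with mean $(x - x_0) L_{t_0}/t_0$ and variance bounded by $\|L'\|_{L^2(0,t_0)}^2 < \infty$, so an explicit computation using the Gaussian moment-generating function, combined with the identity $p_{t_0}(x - x_0) \, e^{-\alpha(x - x_0) L_{t_0}/t_0} = p_{t_0}(x - x_0 + \alpha L_{t_0}) \, e^{\alpha^2 L_{t_0}^2/(2 t_0)}$ obtained by completing the square, yields
$$p_{t_0}(x - x_0) \, \mathbb{E}^\mathbb{Q}[Z_{t_0}^{-p} \mid X_0, X_{t_0}]^{1/p} \leq C_\alpha(p) \, p_{t_0}(x - x_0 + \alpha L_{t_0}),$$
for a constant $C_\alpha(p)$ depending on $p$, $\alpha$ and $\|L'\|_{L^2}^2$ but not on $x$. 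The second factor is given by the classical reflection formula, $\mathbb{P}^\mathbb{Q}(t_0 < \tau^W \mid X_0 = x_0, X_{t_0} = x) = 1 - e^{-2 x_0 x/t_0} \leq 2 x_0 x / t_0$.

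Choosing $q = 1/\beta$ (so $p = 1/(1 - \beta)$) converts the non-hitting probability into precisely a factor of $x^\beta$. Integrating against $V_0$ and using the hypothesis \eqref{eq:Intro_DensityAssumption}, one obtains
$$V_{t_0-}(x) \leq C' \, x^\beta \int_0^\infty V_0(x_0) \, x_0^\beta \, p_{t_0}(x - x_0 + \alpha L_{t_0}) \, dx_0 \leq C'' \, x^\beta$$
for $x$ small, since the Gaussian factor is uniformly bounded and the $V_0$-integral is finite by the power-law bound on $V_0$ near the origin and its boundedness on the tail. For $x$ bounded away from zero, Proposition \ref{MinimalJumps_Prop_DensityExists} supplies the uniform bound $V_{t_0-}(x) \leq \|V_0\|_\infty =: D'$, which provides the constants $D'$ and $x'_\star$ in the statement.

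The main obstacle is controlling the Girsanov factor $\mathbb{E}^\mathbb{Q}[Z^{-p} \mid X_0, X_{t_0}]^{1/p}$: the exponent $p = 1/(1-\beta)$ tends to $\infty$ as $\beta \uparrow 1$, and the resulting constant $C_\alpha(p) \asymp \exp\bigl(\frac{\beta \alpha^2 \|L'\|_{L^2}^2}{2(1-\beta)}\bigr)$ degenerates in that limit. It is finite for every $\beta \in (0,1)$ precisely because the hypothesis $L \in H^1(0, t_0)$ gives $\|L'\|_{L^2(0,t_0)} < \infty$, which is exactly the role played by the $H^1$-assumption. The divergence as $\beta \uparrow 1$ is also consistent with the remark following Theorem \ref{Intro_Thm_FixedPoint}, which explains why the Girsanov-based strategy cannot accommodate an initial datum with $V_0(0+) > 0$, and is thus the technical barrier to extending the analysis across an explosion time as sought in Conjecture \ref{Into_Conj_UniquenessNonVanish}.
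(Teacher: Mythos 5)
Your argument is correct, but it follows a genuinely different route from the paper's. The paper proves the lemma by time reversal: it invokes Hunt's switching identity to write $V_t(x) = \mathbb{E}_{\widehat{X}_0 = x}[V_0(\widehat{X}_t)\mathbf{1}_{t<\widehat{\tau}}]$ for the dual process $\widehat{X}$ started \emph{at} $x$, then applies the unconditional Girsanov bound of Lemma \ref{Fixed_Lem_Girsanov} to replace $\widehat{X}$ by a Brownian motion $W$, uses $V_0(y)\leq Cy^{\beta}$, and extracts $x^{\beta}$ via Jensen's inequality (with $q\beta<1$) from the martingale identity $\mathbb{E}_{W_0=x}[W_t\mathbf{1}_{t<\tau^W}]=x$; finally it sends $t\uparrow t_0$. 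You instead stay in the forward direction, disintegrate the killed transition density over the endpoint, and obtain the boundary decay from the bridge non-hitting probability $1-e^{-2x_0x/t_0}\leq 2x_0x/t_0$ raised to the power $1/q=\beta$, with a conditional Gaussian computation controlling the Girsanov density given the bridge. Both proofs hinge on $L\in H^1$ in exactly the same place (integrability of $\exp\{c\,\Vert L'\Vert_{L^2}^2\}$-type factors that degenerate as $\beta\uparrow 1$ or as the exponent $p\to\infty$), and both reduce matters to a boundary estimate for killed Brownian motion. What your route buys is that it avoids Hunt's switching identity (no need to identify the dual of the time-inhomogeneously drifted process), and it yields an explicit pointwise kernel bound $q_{t_0}(x_0,x)\lesssim x^{\beta}x_0^{\beta}p_{t_0}(x-x_0+\alpha L_{t_0})$, which is somewhat stronger than what the paper extracts; the cost is the more delicate conditional Girsanov step (the law of $\int_0^{t_0}L_s'\,dW_s$ given $W_{t_0}$), which you handle correctly via the jointly Gaussian structure and completion of the square. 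Two cosmetic points you should tidy up in a final write-up: justify that the representation $V_{t_0-}(x)=\int_0^\infty V_0(x_0)q_{t_0}(x_0,x)\,dx_0$ is legitimate at the left limit (either extend $L$ continuously to $t_0$, which is possible since $L$ is increasing and in $H^1(0,t_0)$, or work at $t<t_0$ and pass to the limit as the paper does), and note that the Hölder/conditional-expectation manipulations hold for Lebesgue-a.e.\ $x$, which suffices for a density bound.
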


\begin{proof}
Note that there exists a limit $L_{t_0 -} = \lim_{s \uparrow t_0} L_s$, since $L$ must be increasing. Therefore, there exists a left limit density $V_{t_0-}$ (recall Proposition \ref{MinimalJumps_Prop_DensityExists}).

Fix $t < t_0$. Our strategy is to apply Hunt's switching identity \cite[Thm.~II.1.5]{bertoin_1996}. Define the dual process
\[
d\widehat{X}_s = dB_s + \alpha d\widehat{L}_s,
	\qquad \widehat{L}_s = L_{t-s},
	\qquad \widehat{\tau} = \inf\{ s > 0:  \widehat{X}_s \leq 0 \},
\]
for $s \in [0,t]$. Then we have 
\[
 \int_{x \in \mathbb{R}} \phi(x) \mathbb{E}_{X_0 = x} [ \psi(X_t) \mathbf{1}_{t < \tau} ] dx
 	= \int_{x \in \mathbb{R}} \psi(x) \mathbb{E}_{\widehat{X}_0 = x} [ \phi(\widehat{X}_t) \mathbf{1}_{t < \widehat{\tau}} ] dx,
\]
for all non-negative measurable functions $\phi$ and  $\psi$. (Note that, since $L \in H^1(0,t_0)$ and $\Delta L_{t_0} = 0$ by the hypotheses, we have continuity of $L$).  By taking $\phi = V_0$ and $\psi$ an arbitrary non-negative measurable function, we can conclude that
\begin{equation}
\label{eq:Boot_IC_recovery_I}
V_t(x) = \mathbb{E}_{X_0 = x} [V_0(\widehat{X}_t) \mathbf{1}_{t < \widehat{\tau}}]
\end{equation}
for almost every $x > 0$.

	Since $L \in H^1(0,t_0)$, $\widehat{X}$ is just a Brownian motion with a $H^1$ drift. Therefore we can apply Lemma \ref{Fixed_Lem_Girsanov} with $\widehat{X}$ in place of $X$ and $F = V_0$ to get 
\[
\mathbb{E}_{X_0 = x} [V_0(\widehat{X}_t) \mathbf{1}_{t < \widehat{\tau}}]
	\leq C_q \mathbb{E}_{W_0 = x} [(V_0(W_t) \mathbf{1}_{t < \tau^W})^q]^{\frac{1}{q}}
\]
	for any $q > 1$, where $W$ is a Brownian motion and $C_q < \infty$ depends on $L$, $t_0$ and $q$ and is independent of $t$. (We have absorbed the exponential factor into $C_q$.) By the assumption that $V_0(x) = O(x^\beta)$, and increasing $C_q$ as needed, we deduce that
\[
\mathbb{E}_{X_0 = x} [V_0(\widehat{X}_t) \mathbf{1}_{t < \widehat{\tau}}]
	\leq C_q \mathbb{E}_{W_0 = x} [ ( W_t \mathbf{1}_{t < \tau^W})^{q \beta}]^{\frac{1}{q}}.
\]
Provided we take $q>1$ such that $q\beta < 1$, we can apply Jensen's inequality to get
\[
\mathbb{E}_{X_0 = x} [V_0(\widehat{X}_t) \mathbf{1}_{t < \widehat{\tau}}]
	\leq C_q \mathbb{E}_{W_0 = x} [W_t \mathbf{1}_{t < \tau^W}]^{\beta}
	= C_q x^{\beta}.
\]
Putting this into (\ref{eq:Boot_IC_recovery_I}) gives the required bound near zero at time $t$. Proposition \ref{MinimalJumps_Prop_DensityExists} gives $V_t(x) \leq \Vert V_0 \Vert_\infty$, so we have the required boundedness away from the origin too. Since the constants obtained above are independent of $t$, we can find $C'$, $D'$ and $x_\star'$ such that
\[
V_t(x) \leq C' x^{\beta} \mathbf{1}_{ x < x_\star' } + D' \mathbf{1}_{ x \geq x_\star' }
\] 
for all $x > 0$ and $t < t_0$. By sending $t \uparrow t_0$, we have the result. 
\end{proof}

The implication of Lemma \ref{Boot_Lem_IC_recovery} is that at the end of the fixed-point argument from Section \ref{Sect_FixedPoint} we can restart the argument with new initial conditions that have the same power law decay. As a result we can push our construction of solutions by a further non-zero amount of time. Notice, however, that we lose control of the exact constant that we had in Theorem \ref{Intro_Thm_FixedPoint} and hence the proceeding results are qualitative, not quantitative.

%% COR: Next time interval 
\begin{cor}[Extending solutions]
\label{Boot_Cor_ExtendSolutionOneStep}
Suppose we have a solution $L \in H^1(0,t_0)$ to (\ref{eq:Intro_MVproblem}). Then there exists $t_1 > t_0$ such that $L$ can be extended to a solution of (\ref{eq:Intro_MVproblem}) on $(0,t_1)$ for which $L \in H^1(0,t_1)$. Furthermore, if $L \in \mathcal{S}(\frac{1-\beta}{2}, K_0, t_0)$ for some $K_0 > 0$, then we can find $K_1 > 0$ such that the extension satisfies $L \in \mathcal{S}(\frac{1-\beta}{2}, K_1, t_1)$.
\end{cor}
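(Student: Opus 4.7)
The plan is to glue the existing solution on $[0, t_0]$ with a fresh short-time fixed point constructed after restarting the problem at $t_0$. Lemma \ref{Boot_Lem_IC_recovery} does the heavy lifting here: it tells us that the state of the system just before $t_0$ meets the same structural assumption (\ref{eq:Intro_DensityAssumption}) as the original $V_0$, so Theorem \ref{Intro_Thm_FixedPoint} can be applied afresh.

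The first step is to note that $L \in H^1(0,t_0)$ forces $L$ to be absolutely continuous, hence $\Delta L_{t_0} = 0$ and $1 - L_{t_0} = 1 - L_{t_0-}$ agrees with the total $L^1$-mass of the left-limit density $V_{t_0-}$ (whose existence follows from Proposition \ref{MinimalJumps_Prop_DensityExists} together with the absence of a jump). Lemma \ref{Boot_Lem_IC_recovery} then supplies $C', D', x_\star' > 0$ such that $V_{t_0-}(x) \leq C' x^\beta \mathbf{1}_{x<x_\star'} + D' \mathbf{1}_{x \geq x_\star'}$.

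Next, I would restart the problem at time $t_0$ using Remark \ref{MinimalJumps_Rem_StartingNotZero}, taking $V_{t_0-}$ (renormalised if one prefers probability laws) as the new initial density. Since this density satisfies the hypothesis of Theorem \ref{Intro_Thm_FixedPoint} with the same exponent $\beta$, the fixed-point theorem produces $\delta > 0$ and a solution $\widetilde{L}$ on $[0, \delta)$ with $\widetilde{L} \in \mathcal{S}(\tfrac{1-\beta}{2}, K^\star + \varepsilon, \delta)$, for some $K^\star$ depending only on $C'$ and $x_\star'$. Setting $t_1 := t_0 + \delta$ and defining the extension $L_u := L_{t_0} + \widetilde{L}_{u-t_0}$ for $u \in (t_0, t_1)$, Remark \ref{MinimalJumps_Rem_StartingNotZero} gives that the glued function solves (\ref{eq:Intro_MVproblem}) on $[0, t_1)$. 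Continuity of $L$ at $t_0$ together with $H^1$-regularity on each piece then imply $L \in H^1(0, t_1)$, with weak derivative equal to the concatenation of the two pieces.

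The delicate point is the quantitative $\mathcal{S}$ bound. On $(0, t_0)$ the bound $L'_t \leq K_0 t^{-\frac{1-\beta}{2}}$ is inherited, but on $(t_0, t_1)$ the naive bound $L'_t = \widetilde{L}'_{t-t_0} \leq (K^\star+\varepsilon)(t-t_0)^{-\frac{1-\beta}{2}}$ is singular at $t = t_0^+$, whereas the desired majorant $t^{-\frac{1-\beta}{2}}$ stays bounded there. To close this gap I would sharpen Lemma \ref{Boot_Lem_IC_recovery} by exploiting the boundedness of $V_0$: running the Hunt duality identity without the Jensen step and then Girsanov-comparing with a standard Brownian motion killed at $0$ gives $V_{t_0-}(x) = O(x)$ near the origin, i.e., an effective boundary exponent $\beta'=1$ for the restart. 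Re-examining the $I_1$ estimate in Lemma \ref{Fixed_Lem_BoundTheBits} with $\beta' = 1$ shows that the restart derivative is in fact uniformly bounded by some $M$, and then choosing $K_1 := \max\{K_0,\, M\, t_1^{(1-\beta)/2}\}$ gives $L'_t \leq K_1 t^{-\frac{1-\beta}{2}}$ on all of $(0, t_1)$, as required.
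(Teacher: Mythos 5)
Your overall strategy---restart at $t_0$ via Remark \ref{MinimalJumps_Rem_StartingNotZero}, feed $V_{t_0-}$ into the short-time fixed point, glue the pieces---is exactly the paper's. But you have spotted a genuine soft point that the paper's proof of this corollary passes over silently. Lemma \ref{Boot_Lem_IC_recovery} as stated and proved only returns the \emph{same} exponent $\beta$, so Theorem \ref{Intro_Thm_FixedPoint} applied to the restart yields $F \in \mathcal{S}(\tfrac{1-\beta}{2}, K', u_0)$ and hence $L_t' \le K' (t-t_0)^{-(1-\beta)/2}$ on $(t_0,t_1)$, which is singular at $t_0^{+}$ and cannot be dominated by any $K_1 t^{-(1-\beta)/2}$, bounded near $t_0$. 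The paper's phrase ``with the required derivative control'' does not follow from what precedes it, and you are right that the membership $L \in \mathcal{S}(\tfrac{1-\beta}{2}, K_1, t_1)$ requires a sharper boundary exponent for $V_{t_0-}$ than Lemma \ref{Boot_Lem_IC_recovery} provides. (Note that the $H^1(0,t_1)$ part of the corollary is still fine, since $(t-t_0)^{-(1-\beta)}$ is integrable.)

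Where your fix is not quite tight is in how you extract the improved decay. If you follow the Hunt duality identity and then apply the Girsanov/H\"older step of Lemma \ref{Fixed_Lem_Girsanov} with exponent $q>1$, you obtain
\[
V_t(x) \;\le\; \|V_0\|_\infty\, C_q\, \mathbb{P}_{W_0=x}\bigl(t<\tau^W\bigr)^{1/q} \;=\; O\bigl(x^{1/q}\bigr),
\]
which gives decay $O(x^{1-\varepsilon})$ for every $\varepsilon>0$ but not $O(x)$; and $\beta'<1$ still leaves a singular factor $(t-t_0)^{-(1-\beta')/2}$ at $t_0^+$, so the patching fails again. To get $O(x)$ exactly, discard Girsanov and instead use the monotone comparison available in the dual process: since $L$ is increasing and $\alpha>0$, the dual satisfies $\widehat{X}_s = x + B_s + \alpha (L_{t-s}-L_t) \le x + B_s$, so $\widehat{\tau}$ is dominated by the first hitting time of $0$ by $x+B$, and hence
\[
V_t(x) \;\le\; \|V_0\|_\infty\,\mathbb{P}\bigl(x+B_u>0 \ \forall u\le t\bigr) \;=\; \|V_0\|_\infty\bigl(2\Phi(x/\sqrt{t})-1\bigr) \;\le\; \|V_0\|_\infty\sqrt{2/(\pi t)}\,x .
\]
Sending $t\uparrow t_0$ gives $V_{t_0-}(x)\le C'' x$ with $C'' = \|V_0\|_\infty \sqrt{2/(\pi t_0)}$. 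This step is elementary, requires only $\|V_0\|_\infty<\infty$ from Proposition \ref{MinimalJumps_Prop_DensityExists}, and is sharper than what Girsanov can deliver. With the linear decay in hand, your remaining argument is right: rerunning Lemma \ref{Fixed_Lem_BoundTheBits} (and Lemmas \ref{Fixed_Lem_I2}--\ref{Fixed_Lem_I3}) with $\beta'=1$, i.e.\ $\gamma'=0$, gives a restart derivative bounded by some $M$, and then $K_1 := \max\{K_0, M\, t_1^{(1-\beta)/2}\}$ closes the estimate on $(0,t_1)$. The only cosmetic point is that Definition \ref{Intro_Def_SetS} nominally requires $\gamma\in(0,\tfrac{1}{2})$, so ``$\gamma=0$'' is not literally one of the sets $\mathcal{S}$; but a bounded derivative trivially lies in $\mathcal{S}(\gamma, A t_0^\gamma, t_0)$ for any $\gamma\in(0,\tfrac{1}{2})$, and all the relevant estimates from Section \ref{Sect_FixedPoint} only become easier when $\gamma$ is allowed to be $0$, so this causes no difficulty.
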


\begin{proof}
Since we have $L \in H^1(0,t_0)$, Lemma \ref{Boot_Lem_IC_recovery} implies that $V_{t_0-}$ satisfies the condition (\ref{eq:Intro_DensityAssumption}), for some constants $C$, $D$ and $x_\star$ (that are possibly different to those for $V_0$). Therefore, by Theorem \ref{Intro_Thm_FixedPoint} there exists $K_1>0$ and $u_0 > 0$ such that we can find $F \in \mathcal{S}(\frac{1-\beta}{2}, K_1, u_0)$ solving
\begin{align}
\label{eq:Boot_MVproblem_restarted}
\begin{cases}
 X_{t_0 + u} = X_{t_0} + B_u - \alpha F_u \\
\tau^{(t_0)} = \inf\{ u \geq 0 : X_{t_0 + u} \leq 0 \}  \\
F_u = \mathbb{P}(\tau^{(t_0)} \leq u), 
\end{cases}
\end{align}
for $u < u_0$ (recall Remark \ref{MinimalJumps_Rem_StartingNotZero}). It follows that, if we define $t_1 := t_0 + u_0$ and 
\[
\widetilde{L}_{t}:=\begin{cases}
L_{t}, & \textrm{if }0<t<t_{0}\\
F_{t-t_{0}}, & \textrm{if }t_{0}\leq t<t_{1},
\end{cases}
\]
then $\widetilde{L}$ extends $L$ and solves (\ref{eq:Intro_MVproblem}) for all $t \in (0,t_1)$ with the required derivative control. 
\end{proof}

Naturally, we can iterate Corollary \ref{Boot_Cor_ExtendSolutionOneStep} to prove the first half of Theorem \ref{Intro_Thm_MainUniqueness}.

%% COR: Bootstrap
\begin{cor}[Bootstrap to explosion time]
\label{Boot_Cor_Bootstrap}
There exists a solution $L$ to (\ref{eq:Intro_MVproblem}) up to time
\[
	t_{\mathrm{explode}} := \sup\{t > 0 : \Vert L \Vert_{H^1(0,t)}  < \infty \} \in (0,\infty]
\]
such that, for every $t_0 < t_{\mathrm{explode}}$, we have $L \in \mathcal{S}(\frac{1-\beta}{2}, K, t_0)$ for some $K > 0$. 
\end{cor}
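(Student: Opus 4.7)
The plan is to iterate Corollary \ref{Boot_Cor_ExtendSolutionOneStep} starting from the short-time solution provided by Theorem \ref{Intro_Thm_FixedPoint}. Concretely, let $\mathcal{F}$ denote the collection of pairs $(T, L^T)$ such that $L^T$ solves (\ref{eq:Intro_MVproblem}) on $[0,T)$ and $L^T|_{[0,s]} \in \mathcal{S}(\tfrac{1-\beta}{2}, K^T_s, s)$ for every $s \in (0,T)$ and some constant $K^T_s > 0$. Theorem \ref{Intro_Thm_FixedPoint} guarantees that $\mathcal{F}$ is non-empty. Moreover, the uniqueness statement of Theorem \ref{Intro_Thm_FixedPoint} (applied on any compact sub-interval via the $\mathcal{S}$ control) shows that any two members of $\mathcal{F}$ agree on the intersection of their intervals of definition.

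Setting $T^* := \sup\{T : (T, L^T) \in \mathcal{F}\}$, this consistency allows us to define a single function $L$ on $[0, T^*)$ by patching: for $t < T^*$, choose any $(T, L^T) \in \mathcal{F}$ with $T > t$ and set $L_t := L^T_t$. By construction $L$ solves (\ref{eq:Intro_MVproblem}) on $[0, T^*)$, and for every $t_0 < T^*$ there is some $K = K(t_0) > 0$ with $L \in \mathcal{S}(\tfrac{1-\beta}{2}, K, t_0)$, as required by the statement.

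It remains to identify $T^*$ with $t_{\mathrm{explode}}$. The inclusion $\mathcal{S}(\tfrac{1-\beta}{2}, K, t_0) \subset H^1(0,t_0)$ gives $T^* \leq t_{\mathrm{explode}}$ immediately. For the reverse direction, suppose toward a contradiction that $T^* < t_{\mathrm{explode}}$, so in particular $T^* < \infty$ and $\|L\|_{H^1(0,T^*)} < \infty$. Then $L \in H^1(0, T^*)$, so by Corollary \ref{Boot_Cor_ExtendSolutionOneStep} applied at the endpoint $T^*$ we obtain an extension of $L$ to $[0, T^{**})$ for some $T^{**} > T^*$ that again belongs to $\mathcal{F}$. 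This contradicts the maximality of $T^*$, forcing $T^* = t_{\mathrm{explode}}$.

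The main obstacle is the limit step at $T^* < \infty$: one must check that the patched $L$ genuinely satisfies the hypotheses of Corollary \ref{Boot_Cor_ExtendSolutionOneStep} at $t_0 = T^*$. The relevant point is that membership in $H^1(0, T^*)$ forces $L$ to be (absolutely) continuous on $[0, T^*]$, so no jump occurs at $T^*$; this legitimises the application of Lemma \ref{Boot_Lem_IC_recovery} embedded inside Corollary \ref{Boot_Cor_ExtendSolutionOneStep}, which recovers the power-law decay of $V_{T^*-}$ needed to restart the fixed point construction past $T^*$.
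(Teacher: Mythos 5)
Your proof is correct and takes essentially the same approach as the paper: iterate Corollary~\ref{Boot_Cor_ExtendSolutionOneStep} and argue that the maximal extension time must coincide with the $H^1$ explosion time. The paper phrases this as an explicit transfinite-flavoured iteration (build a sequence $t_1<t_2<\cdots$, restart from the limit $t_\infty$ if $\|L\|_{H^1(0,t_\infty)}<\infty$), whereas you package it as a supremum over a consistent family of extensions; the two are equivalent. One small point of attribution: the consistency of the family $\mathcal{F}$ on overlaps should be traced to the minimality statement in the second half of Theorem~\ref{Intro_Thm_Minimality}, which holds for an arbitrary $t_0$, rather than to the uniqueness clause of Theorem~\ref{Intro_Thm_FixedPoint}, which is formulated only for the specific short time produced by the fixed-point construction (the paper's proof of the latter of course just invokes the former). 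Your closing remark correctly identifies the only delicate step, namely that $L\in H^1(0,T^*)$ forces continuity at $T^*$ so that Lemma~\ref{Boot_Lem_IC_recovery} and hence Corollary~\ref{Boot_Cor_ExtendSolutionOneStep} can be applied at the boundary.
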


\begin{proof}
By repeating Corollary \ref{Boot_Cor_ExtendSolutionOneStep}, we can find an infinite sequence of times $t_0  < \cdots < t_n < \cdots$ over which we can successively extend $L$. If $t_\infty := \lim_{n \to \infty} t_n$ is such that $\Vert L \Vert_{H^1(0,t_\infty)} = \infty$, then we are done. Otherwise, by the left continuity of $L$, we can restart the argument from $t_\infty$ by applying Corollary \ref{Boot_Cor_ExtendSolutionOneStep}. This procedure cannot terminate at a time for which $\Vert L \Vert_{H^1(0,t)} < \infty$, or else it can be restarted, hence we conclude the result. 
\end{proof}

%%%%%%%%%%%%%%%%%%%%%%   TRAPPING    %%%%%%%%%%%%%%%%%%%%%%%%%%%%
\subsection{Monotonicity and trapping} \label{Subsec:mon_trap}

Our main technical construction will be a solution to (\ref{eq:Intro_MVproblem}) for which we delete a portion of the initial condition near the boundary and add that mass to the loss at time zero, before finally shifting the density towards zero accordingly. 

%% DEF: e-modificaiton
\begin{defn}[$\varepsilon$-deleted solutions]
\label{Boot_Def_DeletedSolution}
Suppose $X_0$ has a density $V_0$ and let $\varepsilon > 0$. Define the McKean--Vlasov problem 
\begin{align}
\label{eq:Boot_DeletedSol}
\begin{cases}
 X_t^\varepsilon = X_0 \mathbf{1}_{X_0 \geq \varepsilon} -\tfrac{1}{4} \varepsilon + B_t - \alpha L_t^\varepsilon \\
\tau^\varepsilon = \inf\{ t \geq 0 : X^\varepsilon_t \leq 0 \}  \\
L_t^\varepsilon = \nu_0(0,\varepsilon) + \int_{\varepsilon}^{\infty}\mathbb{P}_{X_0 = x}(\tau^\varepsilon \leq t ) \nu_0(dx), 
\end{cases}
\end{align}
where solutions, $L^\varepsilon$, are taken to be c\`{a}dl\`{a}g. 
\end{defn}

It is not immediately clear that we can solve the above problem for $\varepsilon > 0$, however, careful inspection of the initial loss reveals that we can.

%% FIG: figure epsilon deleted initial condition
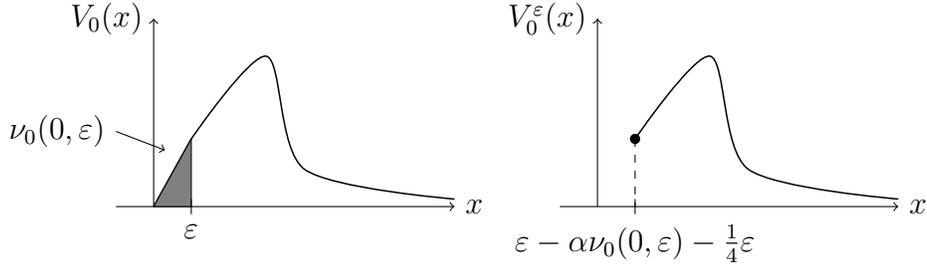
\begin{figure}
\begin{center}
\begin{tikzpicture}
\draw[->] (-0.5,0) -- (4,0) node [right] {$x$};
\draw[->] (0,0) -- (0,2.5) node [left] {$V_0(x)$};
\draw[-, line width = 0.2mm] plot coordinates { (0,0) (0.5, 0.9) } ;
\draw[-,  line width = 0.2mm] plot [smooth] coordinates { (0.5, 0.9)  (1.5,2) (2,0.5) (4, 0.1) } ;
\draw[fill = gray] plot coordinates { (0,0) (0.5, 0.9) (0.5,0) (0,0) } ;
\draw[-] (0.5,0.1) -- (0.5,-0.1) node [below] {$\varepsilon$};
\draw[<-] (0.15, 0.75) -- (-0.5, 1) node [left] {$\nu_0(0,\varepsilon)$};

\draw[->] (-0.5+6-0.1,0) -- (4+6-0.1,0) node [right] {$x$};
\draw[->] (0+6-0.1,0) -- (0+6-0.1,2.5) node [left] {$V^\varepsilon_0(x)$};
\draw[-,  line width = 0.2mm] plot [smooth] coordinates { (0.5 +6-0.1 , 0.9)  (1.5+6-0.1,2) (2+6-0.1,0.5) (4+6-0.1, 0.1) } ;
\node[circle, fill = black, scale = 0.35] at (0.5 +6-0.1 , 0.9) {};
\draw[-, dashed] (0.5 +6-0.1 , 0.9) -- (0.5 +6-0.1 , 0);
\draw[-] (0.5 +6-0.1,0.1) -- (0.5 +6-0.1,-0.1) node [below] {$\varepsilon - \alpha \nu_0(0,\varepsilon) - \tfrac{1}{4} \varepsilon$};

\end{tikzpicture}
\caption{\label{fig:Boot_eps_deleted_ic} Given an initial density, $V_0$, the $\varepsilon$-deleted initial condition constructed in (\ref{eq:Boot_Contruction_F_solution}) is obtained by killing the mass on $(0,\varepsilon)$ and shifting the density towards the origin by the amount $\alpha\nu_0(0,\varepsilon) + \tfrac{1}{4} \varepsilon$. The proof of Lemma \ref{Boot_Lem_epsilonSolution} shows us that $V^\varepsilon_0$ vanishes in a neighbourhood of zero. }
\end{center} \vspace{-10pt}
\end{figure} 

%% LEM:
\begin{lem}[$\varepsilon$-deleted solutions exist]
\label{Boot_Lem_epsilonSolution}
Assume $\nu_0$ has a density satisfying (\ref{eq:Intro_DensityAssumption}). Then there exist $K > 0$, $t_0 > 0$ and $\varepsilon_0 > 0$ such that, for every $\varepsilon \in (0, \varepsilon_0)$, there is a solution $L^\varepsilon \in \mathcal{S}(\frac{1-\beta}{2}, K, t_0)$ to (\ref{eq:Boot_DeletedSol}). 
\end{lem}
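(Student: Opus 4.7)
The plan is to rephrase (\ref{eq:Boot_DeletedSol}) as an instance of (\ref{eq:Intro_MVproblem}) with a modified initial density and then apply Theorem \ref{Intro_Thm_FixedPoint} with constants chosen uniformly in $\varepsilon$. Write $s_\varepsilon := \tfrac{1}{4}\varepsilon + \alpha\nu_0(0,\varepsilon)$ and define $F_t := L^\varepsilon_t - \nu_0(0,\varepsilon)$, so that $F_0 = 0$ and $F$ is c\`{a}dl\`{a}g. Direct inspection of (\ref{eq:Boot_DeletedSol}) shows that $F$ solves
\[
Y_t = Y_0 + B_t - \alpha F_t, \qquad \tau = \inf\{t\geq 0 : Y_t \leq 0\}, \qquad F_t = \mathbb{P}(\tau \leq t),
\]
where $Y_0$ has the sub-probability density $V_0^\varepsilon(y) := V_0(y + s_\varepsilon)\mathbf{1}_{y \geq \varepsilon - s_\varepsilon}$ on $(0,\infty)$, obtained by erasing the mass of $V_0$ on $(0,\varepsilon)$ and translating the remainder towards zero by $s_\varepsilon$. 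This is exactly the setting of Remark \ref{MinimalJumps_Rem_StartingNotZero}, and since $(L^\varepsilon)'_t = F'_t$ on $(0,t_0)$ it suffices to construct $F \in \mathcal{S}(\tfrac{1-\beta}{2}, K, t_0)$ with $K, t_0$ independent of $\varepsilon$.

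The key step is to verify uniform bounds on $V_0^\varepsilon$. From (\ref{eq:Intro_DensityAssumption}) we have $\nu_0(0,\varepsilon) \leq C\varepsilon^{\beta+1}/(\beta+1)$ for $\varepsilon\leq x_\star$, so we may fix $\varepsilon_0 > 0$ small enough that $\alpha\nu_0(0,\varepsilon) \leq \tfrac{1}{4}\varepsilon$ for all $\varepsilon \in (0,\varepsilon_0)$. Then $s_\varepsilon \leq \tfrac{1}{2}\varepsilon$ and $\varepsilon - s_\varepsilon \geq \tfrac{1}{2}\varepsilon > 0$, so $V_0^\varepsilon$ vanishes identically on $(0,\tfrac{1}{2}\varepsilon)$. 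On the support $y \geq \tfrac{1}{2}\varepsilon \geq s_\varepsilon$ we have $y+s_\varepsilon\leq 2y$, whence
\[
V_0^\varepsilon(y) \leq 2^\beta C\, y^\beta\,\mathbf{1}_{y\leq x_\star/2} + \|V_0\|_\infty\,\mathbf{1}_{y>x_\star/2}, \qquad \textrm{for every } \varepsilon \in (0,\varepsilon_0) \textrm{ and } y>0,
\]
where for $y \leq x_\star/2$ we used $y + s_\varepsilon \leq x_\star$ and the original bound on $V_0$. Thus $V_0^\varepsilon$ obeys (\ref{eq:Intro_DensityAssumption}) with the triple $(2^\beta C, \|V_0\|_\infty, x_\star/2)$, uniformly in $\varepsilon$.

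With this uniform bound in hand, Theorem \ref{Intro_Thm_FixedPoint} applied to the problem driven by $V_0^\varepsilon$ yields a fixed point $F^\varepsilon \in \mathcal{S}(\tfrac{1-\beta}{2}, K, t_0)$ whose constant $K$ depends only on $\beta$, $2^\beta C$ and $x_\star/2$, while $t_0$ depends additionally on $\|V_0\|_\infty$ and $\alpha$; in particular, both are independent of $\varepsilon \in (0,\varepsilon_0)$. Setting $L^\varepsilon_t := \nu_0(0,\varepsilon) + F^\varepsilon_t$ then produces the required solution of (\ref{eq:Boot_DeletedSol}). The only substantive content of the argument is thus the $\varepsilon$-uniform density bound above; the application of Theorem \ref{Intro_Thm_FixedPoint} is essentially mechanical once this is in hand. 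The mild technical subtlety to watch out for is simply that $V_0^\varepsilon$ is a sub-probability density, but the fixed-point argument carries through as recorded in Remark \ref{MinimalJumps_Rem_StartingNotZero}.
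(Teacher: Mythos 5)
Your proposal is correct and follows essentially the same route as the paper's own proof: shrink $\varepsilon_0$ so that $\alpha\nu_0(0,\varepsilon)\leq\tfrac14\varepsilon$, observe that the shifted-and-truncated density $V_0^\varepsilon$ vanishes on $(0,\tfrac12\varepsilon)$ and satisfies (\ref{eq:Intro_DensityAssumption}) with constants uniform in $\varepsilon$ (via $y+s_\varepsilon\leq 2y$ on the support, matching the paper's $(x+\tfrac12\varepsilon)^\beta\leq 2^\beta x^\beta$), and then invoke Theorem \ref{Intro_Thm_FixedPoint} together with Remark \ref{MinimalJumps_Rem_StartingNotZero} for the sub-probability initial condition. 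No gaps.
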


\begin{proof}
We begin by noting that, for $\varepsilon < \varepsilon_0 < x_\star$, we have
\[
\nu_0(0,\varepsilon) \leq \int^\varepsilon_0 Cx^\beta dx \leq \frac{C}{1+\beta} \varepsilon^{1 + \beta}
	\leq \frac{C\varepsilon_0^\beta}{1+\beta} \varepsilon.
\]
Hence we can certainly take $\varepsilon_0$ sufficiently small so that 
\[
\nu_0(0,\varepsilon) \leq \tfrac{1}{4} \alpha^{-1} \varepsilon,
	\qquad \textrm{for every } \varepsilon < \varepsilon_0. 
\]
This guarantees that $\alpha \nu_0(0,\varepsilon) + \tfrac{1}{4}\varepsilon \leq \tfrac{1}{2}\varepsilon < \varepsilon$, and so we can rewrite (\ref{eq:Boot_DeletedSol}) as 
\begin{align}
\label{eq:Boot_Contruction_F_solution}
L^\varepsilon_t = \nu_0(0,\varepsilon) + F_t^\varepsilon,
	\qquad 
\begin{cases}
 X_t^{\varepsilon,x} = x + B_t - \alpha F_t^\varepsilon \\
\tau^{\varepsilon,x} = \inf\{ t \geq 0 : X^{\varepsilon,x}_t \leq 0 \}  \\
F_t^\varepsilon = \int_{0}^{\infty}\mathbb{P}(\tau^{\varepsilon,x} \leq t ) V_0^\varepsilon(x) dx, \\
V_0^\varepsilon(x) = V_0(x + \alpha \nu_0(0,\varepsilon) + \tfrac{1}{4}\varepsilon) \mathbf{1}_{x + \alpha \nu_0(0,\varepsilon ) + \varepsilon/4 \geq \varepsilon},
\end{cases}
\end{align}
see Figure \ref{fig:Boot_eps_deleted_ic}. As noted in Remark \ref{MinimalJumps_Rem_StartingNotZero}, we can solve to find $F^\varepsilon$, since $V_0^\varepsilon$ vanishes on $x < \tfrac{1}{2}\varepsilon$, so we certainly have the control in (\ref{eq:Intro_DensityAssumption}) for some choice of constants.   Furthermore, we have
\begin{align*}
V_0^\varepsilon(x)
	&\leq C (x + \alpha \nu_0(0,\varepsilon)+ \tfrac{1}{4}\varepsilon)^\beta \mathbf{1}_{x + \alpha \nu_0(0,\varepsilon) + \varepsilon /4  \geq \varepsilon}  \\
	&\leq C (x + \tfrac{1}{2}\varepsilon)^\beta  \mathbf{1}_{x  \geq \varepsilon / 2}
	\leq 2^\beta Cx^\beta, 
\end{align*}
so we can find constants such that (\ref{eq:Intro_DensityAssumption}) holds for $V^\varepsilon_0$ uniformly in $\varepsilon \in (0,\varepsilon_0)$. Since the arguments in Section \ref{Sect_FixedPoint} for proving Theorem \ref{Intro_Thm_FixedPoint} depend only on the constants in (\ref{eq:Intro_DensityAssumption}), we can conclude that the 
%subset
%% I am not sure what is meant by `subset parameters' - is it ok just as parameters? 
parameters of $\mathcal{S}$ obtained in Theorem \ref{Intro_Thm_FixedPoint} are constant across the class of initial densities $\{V^\varepsilon_0\}_\varepsilon$, for $\varepsilon < \varepsilon_0$. Hence we have the result. 
\end{proof}

The solutions in Lemma \ref{Boot_Lem_epsilonSolution} are useful because we have the following two comparison results, which say that the $\varepsilon$-deleted solutions dominate and converge to the traditional solutions of (\ref{eq:Intro_MVproblem}) --- see Figure \ref{fig:Boot_trapping}.

%% FIG: illustration of trapping
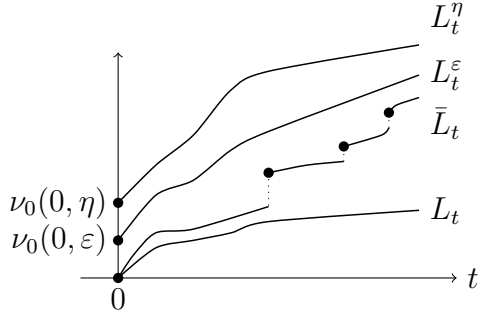
\begin{figure}
\begin{center}
\begin{tikzpicture}
\draw[->] (-0.5,0) -- (4.5,0) node [right] {$t$};
\draw[->] (0,0) -- (0,3) node [left] { };
\draw[-,  line width = 0.2mm] plot [smooth] coordinates { (0,0) (0.5, 0.4) (1.0,0.5) (1.5,0.6) (2,0.75) (4, 0.9) } ;
\node[circle, fill = black, scale = 0.35] at (0.0,0.0) {};

\draw[-,  line width = 0.2mm] plot [smooth] coordinates { (0,0.5) (0.5, 0.4+0.7) (1.0,0.5+0.8) (1.5,0.6+1.1) (2,0.75+1.2) (4, 0.9+1.5+0.3) } ;
\node[circle, fill = black, scale = 0.35] at (0,0.5) {};

\draw[-,  line width = 0.2mm] plot [smooth] coordinates { (0,0.5+0.5) (0.5, 0.4+0.7+0.4) (1.0,0.5+0.8+0.6) (1.5,0.6+1.1+0.8) (2,0.75+1.2+0.8) (4, 0.9+1.5+0.7) } ;
\node[circle, fill = black, scale = 0.35] at (0,1) {};

\draw[-,  line width = 0.2mm] plot [smooth] coordinates { (0,0) (0.2, 0.3) (0.5, 0.6) (1.0, 0.65) (2.0, 0.95) } ;
\node[circle, fill = black, scale = 0.35] at (2.0 , 1.4) {};
\draw[-,dotted] (2.0 , 1.4) -- (2.0,0.95);
\draw[-,  line width = 0.2mm] plot [smooth] coordinates { (2.0 , 1.4) (2.5, 1.5) (3,1.55)  } ;
\node[circle, fill = black, scale = 0.35] at (3,1.75) {};
\draw[-,dotted] (3,1.55) -- (3,1.75);
\draw[-,  line width = 0.2mm] plot [smooth] coordinates { (3,1.75) (3.5, 1.9)  (3.6, 2.0)  } ;
\node[circle, fill = black, scale = 0.35] at (3.6,2.2) {};
\draw[-,dotted] (3.6, 2.0)  -- (3.6,2.2);
\draw[-,  line width = 0.2mm] plot [smooth] coordinates { (3.6,2.2) (3.7, 2.3)  (4, 2.4)  };

\node[below right] at (4, 2.4) {$\bar{L}_t$};
\node[right] at (4, 0.9+1.5+0.3) {$L^\varepsilon_t$};
\node[above right] at (4, 0.9+1.5+0.7)  {$L^\eta_t$};
\node[right] at (4, 0.9) {$L_t$};

\node[left] at (0,0.5) {$\nu_0(0,\varepsilon)$};
\node[left] at (0,1.0) {$\nu_0(0,\eta)$};
\node[below] at (0,0) {$0$};

\end{tikzpicture}
\caption{\label{fig:Boot_trapping} On a small time interval, the unique differentiable solution, $L$,  from Corollary \ref{Boot_Cor_Bootstrap} and the $\varepsilon$-deleted solutions from Definition \ref{Boot_Def_DeletedSolution} trap any candidate c\`{a}dl\`{a}g solution, $\bar{L}$, by Lemma \ref{Boot_Lem_Monotonicity}. Here $\eta > \varepsilon$. Lemma \ref{Boot_Lem_Convergence} shows that $L^\varepsilon \to L$ uniformly on a small time interval as $\varepsilon \to 0$, and so we see that $L = \bar{L}$ is forced since the area between the curves above shrinks to zero.  }
\end{center} \vspace{-10pt}
\end{figure}

%% PROP: Monotonicity
\begin{lem}[Monotonicity]
\label{Boot_Lem_Monotonicity}
Let $\varepsilon > 0$ and $t_0 > 0$ be fixed. Assume that $L$ is a generic solution to (\ref{eq:Intro_MVproblem}) and that $L^\varepsilon$ solves (\ref{eq:Boot_DeletedSol}) and that both are continuous on $[0,t_0)$. Then 
\[
L^\varepsilon_t > L_t,
	\qquad \textrm{for every } t \in [0,t_0).
\]
\end{lem}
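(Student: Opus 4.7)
The plan is to couple both McKean--Vlasov problems to a single Brownian motion $B$ and a single initial value $X_0 \sim \nu_0$, and then run a first-crossing contradiction argument. Under this coupling one immediately computes
\[
X_t - X^\varepsilon_t = X_0 \mathbf{1}_{X_0 < \varepsilon} + \tfrac{1}{4}\varepsilon + \alpha(L^\varepsilon_t - L_t),
\]
so at $t=0$ the gap is at least $\tfrac{1}{4}\varepsilon$, and $L^\varepsilon_0 = \nu_0(0,\varepsilon) > 0 = L_0$ (we may assume $\nu_0(0,\varepsilon)>0$, else the lemma degenerates at $t=0$). I would then suppose for contradiction that $\{t \in (0,t_0) : L^\varepsilon_t \leq L_t\}$ is non-empty and set $t^*$ equal to its infimum; by continuity of both loss processes on $[0,t_0)$ and the strict initial separation, $t^* \in (0,t_0)$, $L^\varepsilon_{t^*} = L_{t^*}$, and $L^\varepsilon_s \geq L_s$ throughout $[0,t^*]$.

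The core of the argument then exploits the pointwise identity above, which on the event $\{X_0 \geq \varepsilon\}$ immediately gives $X_s \geq X^\varepsilon_s + \tfrac{1}{4}\varepsilon$ for every $s \in [0,t^*]$. This forces $\tau^\varepsilon \leq \tau$ on the event $\{X_0 \geq \varepsilon,\, \tau \leq t^*\}$, because at $s = \tau$ one has $X_\tau = 0$ and hence $X^\varepsilon_\tau \leq -\tfrac{1}{4}\varepsilon < 0$. On the complementary event $\{X_0 < \varepsilon\}$ one has the deterministic $\tau^\varepsilon = 0$. Expanding both loss processes as $\mathbb{P}(\tau^\bullet \leq t^*)$, partitioning according to $\{X_0 < \varepsilon\}$ versus $\{X_0 \geq \varepsilon\}$, and cancelling the common piece on $\{X_0 \geq \varepsilon,\, \tau \leq t^*\}$, I arrive at the key lower bound
\[
L^\varepsilon_{t^*} - L_{t^*} \geq \mathbb{P}(X_0 < \varepsilon,\; \tau > t^*).
\]

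The one substantive step, and the step I expect to be the main obstacle, is showing that this right-hand side is \emph{strictly} positive. I would argue this by conditioning on $X_0 = x$ for some $x \in (0,\varepsilon)$ in the support of $\nu_0$. The event $\{\tau > t^*\}$ then reduces to the event that $B$ stays strictly above the deterministic continuous curve $g(s) := \alpha L_s - x$ on $[0,t^*]$, which satisfies $g(0) = -x < 0$. One can explicitly construct a continuous test path $w \in C_0[0,t^*]$ with $w > g$ pointwise (e.g.\ take $w \equiv 0$ on $[0,\delta]$ where continuity forces $g < -x/2$, then linearly overtake $\sup_{[0,t^*]} g + 1$ thereafter). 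Because the sup-norm neighbourhood $\{v \in C_0[0,t^*] : v > g\}$ is open and contains $w$, the full topological support of Wiener measure on $C_0[0,t^*]$ guarantees it is charged with positive probability. This yields the desired strict positivity, contradicts $L^\varepsilon_{t^*} = L_{t^*}$, and closes the proof.
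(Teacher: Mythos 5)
Your proof is correct and shares its overall architecture with the paper's --- the same coupling to a single Brownian motion and initial value, the same difference identity $X_s - X^\varepsilon_s = X_0\mathbf{1}_{X_0<\varepsilon} + \tfrac{1}{4}\varepsilon + \alpha(L^\varepsilon_s - L_s) \ge \tfrac{1}{4}\varepsilon$ on $[0,t^*]$, and the same first-crossing contradiction --- but the mechanism producing the strict gap at the crossing time is genuinely different. The paper uses the inclusion $\{\inf_{u\le t} X_u \le \tfrac{1}{4}\varepsilon\}\subseteq\{\inf_{u\le t}X^\varepsilon_u\le 0\}$, so its surplus term is $\mathbb{P}\bigl(\inf_{u\le t}X_u\in(0,\tfrac{1}{4}\varepsilon]\bigr)$, whose positivity is obtained by approximating $\alpha L$ uniformly by an $H^1$ function, applying Girsanov, and using the fact that the running infimum of Brownian motion has a density. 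You instead partition on $\{X_0<\varepsilon\}$ versus $\{X_0\ge\varepsilon\}$, so your surplus term is $\mathbb{P}(X_0<\varepsilon,\,\tau>t^*)$ --- the deleted initial mass that survives in the undeleted problem --- and you get positivity from the full support of Wiener measure (the set of paths staying strictly above the continuous curve $\alpha L_\cdot - x$ is open and non-empty, as you verify). Both routes are valid; your positivity step is arguably more elementary (no change of measure, no distributional fact about the infimum), while the paper's surplus term has the mild advantage of being positive for every $t>0$ even if $\nu_0(0,\varepsilon)=0$, whereas yours genuinely requires $\nu_0(0,\varepsilon)>0$. That said, both proofs need $\nu_0(0,\varepsilon)>0$ anyway to launch the first-crossing argument via strict separation at $t=0$ --- an implicit hypothesis the paper passes over with ``by construction, the result is true at time $t=0$'' and which you rightly make explicit.
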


\begin{proof}
By construction, the result is true at time $t= 0$. For a contradiction, let $t$ be the first time at which $L^\varepsilon_t = L_t$. Couple both solutions to the same Brownian motion:
\begin{align*}
X_s &= X_0  + B_s - \alpha L_s \\
X^\varepsilon_s &= X_0 \mathbf{1}_{X_0 \geq \varepsilon} -\tfrac{1}{4}\varepsilon + B_s   - \alpha L^\varepsilon_s,
\end{align*}
for $s \in [0,t)$, so that taking the difference gives
\[
X_s - X^\varepsilon_s
	= X_0 \mathbf{1}_{X_0 <  \varepsilon} + \tfrac{1}{4}\varepsilon - \alpha(L_s - L^\varepsilon_s)
	\geq \tfrac{1}{4}\varepsilon.
\]
Therefore, from the definition of $L^\varepsilon_t$, we get
\begin{align}
\label{eq:Boot_ComparisonI}
L^\varepsilon_t 
	= \mathbb{P}(\inf_{0\leq u \leq t} X^\varepsilon_u \leq 0)
	&= \lim_{s \uparrow t}  \mathbb{P}(\inf_{0\leq u \leq s} X^\varepsilon_u \leq 0) \nonumber \\
	&\geq \lim_{s \uparrow t} \mathbb{P}(\inf_{0\leq u \leq s} X_u \leq \tfrac{1}{4}\varepsilon) \nonumber  \\
	&= L_t + \mathbb{P}( \inf_{0\leq u \leq t} X_u \in (0, \tfrac{1}{4}\varepsilon])
\end{align}
Now fix any $f \in H^1([0,t])$ such that $\Vert \alpha L - f \Vert_{L^\infty([0,t])} < \tfrac{1}{16}\varepsilon$. Then we have
\begin{equation}
\label{eq:Boot_ComparisonII}
L^\varepsilon_t \geq L_t + \mathbb{P}(\inf_{0\leq u \leq t} \{ X_0 + B_u - f_u \} \in (\tfrac{1}{16}\varepsilon, \tfrac{3}{16} \varepsilon)).
\end{equation}
By Girsanov's Theorem, $u \mapsto B_u - f_u$ is absolutely continuous with respect to Brownian motion, and the infimum of Brownian motion has a density, therefore the probability on the right-hand side of (\ref{eq:Boot_ComparisonII}) is non-zero. Hence $L^\varepsilon_t > L_t$, which is the required contradiction. 
\end{proof}

%% LEM: Comparison for epsilon
\begin{lem}[Convergence]
\label{Boot_Lem_Convergence}
Suppose there exist constants $\gamma \in (0,\tfrac{1}{2})$, $K >0$, $t_0 > 0$ and $\varepsilon_0 > 0$ such that $L, L^\varepsilon \in \mathcal{S}(\gamma, K, t_0)$ for all $\varepsilon \in (0,\varepsilon_0)$. Then there exists $t_1 \in (0 , t_0]$ such that
\[
\Vert L^\varepsilon - L \Vert_{L^\infty(0,t_1)} \to 0,
	\qquad \textrm{as } \varepsilon \to 0.
\]
\end{lem}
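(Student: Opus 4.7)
The plan is to exploit the contractive structure of the fixed-point map $\Gamma^\varepsilon$ naturally associated with the $\varepsilon$-deleted problem (\ref{eq:Boot_DeletedSol}), combined with a Dini-type estimate on the perturbation of the data. The first observation is that the coupling argument underlying Proposition~\ref{Unique_Prop_Comparison} (and its Corollary~\ref{Unique_Cor_DifferentialComparison}) depends only on the two processes sharing the same Brownian motion and starting from the same initial value, which is exactly what happens when comparing two inputs of $\Gamma^\varepsilon$. Hence the comparison bound transfers to $\Gamma^\varepsilon$ with the same absolute constants. Repeating the proof of Theorem~\ref{Intro_Thm_Minimality} then furnishes a time $t_1 \in (0, t_0]$, depending only on $\gamma$, $K$ and the model parameters but independent of $\varepsilon$, on which $\Gamma^\varepsilon$ acts as a $\tfrac{1}{2}$-contraction in $L^\infty(0, t_1)$ on $\mathcal{S}(\gamma, K, t_1)$ for every $\varepsilon \in (0, \varepsilon_0)$.

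Using $L = \Gamma[L]$ and $L^\varepsilon = \Gamma^\varepsilon[L^\varepsilon]$ together with this contraction, the triangle inequality yields
\[
\|L^\varepsilon - L\|_{L^\infty(0, t_1)} \leq \tfrac{1}{2}\|L^\varepsilon - L\|_{L^\infty(0, t_1)} + \|\Gamma^\varepsilon[L] - L\|_{L^\infty(0, t_1)},
\]
which rearranges to $\|L^\varepsilon - L\|_{L^\infty(0, t_1)} \leq 2\|\Gamma^\varepsilon[L] - L\|_{L^\infty(0, t_1)}$. The task therefore reduces to a perturbation estimate on the map itself, namely $\|\Gamma^\varepsilon[L] - L\|_{L^\infty(0, t_1)} \to 0$ as $\varepsilon \to 0$.

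For this, I would couple $\Gamma^\varepsilon[L]$ and $L = \Gamma[L]$ via the same $X_0 \sim \nu_0$ and Brownian motion $B$. Writing $\sigma := \inf\{s : X_0 + B_s - \alpha L_s \leq 0\}$ and $\sigma^\varepsilon$ for the corresponding hitting time of the shifted process $X_0 - \tfrac{\varepsilon}{4} + B_s - \alpha L_s$, a direct computation using $\sigma^\varepsilon \leq \sigma$ gives the non-negative decomposition $\Gamma^\varepsilon[L]_t - L_t = \mathbb{P}(\sigma > t, X_0 < \varepsilon) + \mathbb{P}(\sigma^\varepsilon \leq t < \sigma, X_0 \geq \varepsilon)$. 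The first piece is bounded by $\nu_0(0, \varepsilon) \to 0$ uniformly in $t$. Rewriting the second piece as $\mathbb{P}(\min_{s \in [0, t]}(X_0 + B_s - \alpha L_s) \in (0, \tfrac{\varepsilon}{4}], X_0 \geq \varepsilon)$ and dominating by
\[
P_\varepsilon(t) := \int_0^\infty V_0(x)\, \mathbb{P}\bigl( \min_{s \in [0, t]}(x + B_s - \alpha L_s) \in (0, \tfrac{\varepsilon}{4}] \bigr)\, dx,
\]
I would then verify that (i) $t \mapsto P_\varepsilon(t)$ is continuous on $[0, t_1]$, by continuity of the Brownian hitting-time distributions and dominated convergence in $x$; (ii) $P_\varepsilon(t)$ is monotone increasing in $\varepsilon$, since the trapping interval widens; and (iii) $P_\varepsilon(t) \to 0$ pointwise as $\varepsilon \to 0$, via Girsanov's theorem (available because $L \in H^1$) to reduce the inner probability to an event for standard Brownian motion, whose minimum distribution is non-atomic, together with dominated convergence in $x$.

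Dini's theorem applied to the monotone decreasing sequence $\{P_{1/n}\}_{n \in \mathbb{N}}$ of continuous functions on the compact interval $[0, t_1]$ then upgrades pointwise convergence to $\sup_{[0, t_1]} P_\varepsilon \to 0$, which combined with the bound on the first piece finishes the proof. The main obstacle is precisely this final step: a naive direct bound on the trapping probability using the density of the Brownian minimum carries a factor of order $t^{-1/2}$ that is singular at the origin, so uniformity in $t$ cannot be obtained through an explicit modulus-of-continuity estimate. Monotonicity in $\varepsilon$, which enables Dini's theorem, is the device that sidesteps this difficulty.
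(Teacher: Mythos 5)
Your proposal is correct, but it takes a genuinely different route from the paper. The paper's proof is a direct one-shot estimate: it uses the monotonicity $L^\varepsilon > L$ from Lemma~\ref{Boot_Lem_Monotonicity}, couples both processes to the same Brownian motion, writes $L^\varepsilon_t - L_t = \mathbb{P}(\tau^\varepsilon \leq t < \tau)$, conditions on $\tau^\varepsilon = s$ (noting that on $\{\tau^\varepsilon = s,\ X_0\geq\varepsilon\}$ one has $X_s = \tfrac14\varepsilon + \alpha(L^\varepsilon_s - L_s)$), and then invokes the derivative bound $(L^\varepsilon)'_s \leq K s^{-\gamma}$ to produce
\[
0 \leq L^\varepsilon_t - L_t \leq c_0 t^{1/2-\gamma}\bigl(\Vert L^\varepsilon - L \Vert_{L^\infty(0,t)} + \tfrac14\varepsilon\bigr) + \nu_0(0,\varepsilon),
\]
which self-improves to $\Vert L^\varepsilon - L \Vert_{L^\infty(0,t_1)} \leq \tfrac12\Vert L^\varepsilon - L \Vert_{L^\infty(0,t_1)} + o(1)$. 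This yields an explicit rate (roughly $O(\varepsilon)$) in one stroke. You instead introduce the auxiliary map $\Gamma^\varepsilon$, invoke its contractivity, and reduce to a perturbation estimate on $\Gamma^\varepsilon[L] - L$ whose uniformity you settle by Dini's theorem.

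Two comments on the gaps between what you wrote and a fully justified argument. First, the contraction for $\Gamma^\varepsilon$ on $\mathcal{S}(\gamma,K,t_1)$ requires, via Corollary~\ref{Unique_Cor_DifferentialComparison}, that both outputs $\Gamma^\varepsilon[L^\varepsilon]=L^\varepsilon$ and $\Gamma^\varepsilon[L]$ have the $s^{-\gamma}$ derivative control; the former is your hypothesis but the latter needs the stability of $\Gamma^\varepsilon$ with constants uniform in $\varepsilon$. This does hold (it is the content of the uniform constants in the proof of Lemma~\ref{Boot_Lem_epsilonSolution}), but your proposal glosses over it. Second, the Dini step is a valid workaround but is in fact unnecessary: your worry about the $t^{-1/2}$ factor is illusory. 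The per-$x$ bound $\mathbb{P}(\min_{s\le t}(x+B_s)\in(0,\varepsilon/4]) \lesssim \varepsilon\, t^{-1/2}\phi((x-\varepsilon/4)/t^{1/2})$ does carry the singularity, but upon integrating against the bounded density $V_0(x)\,dx$ and substituting $y=(x-\varepsilon/4)/t^{1/2}$ the $t^{1/2}$ cancels, giving $P_\varepsilon(t)\lesssim \Vert V_0\Vert_\infty\,\varepsilon$ uniformly in $t$ (and a Girsanov step with fixed $q>1$ handles the drift, replacing $\varepsilon$ by $\varepsilon^{1/q}t^{(q-1)/(2q)}$, still uniform). So a direct estimate, in the spirit of the paper's, works without any monotone-convergence device. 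Your approach is more elaborate than the paper's but is sound once the stability of $\Gamma^\varepsilon$ is acknowledged, and it has the minor structural appeal of quantifying the perturbation of the map rather than the solution.
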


\begin{proof}
We know from Lemma \ref{Boot_Lem_Monotonicity} that $L^\varepsilon > L$ on $[0,t_0)$. By following the argument in Section \ref{Sect_Unique} for the proof of Theorem \ref{Intro_Thm_Minimality}, and coupling $L$ and $L^\varepsilon$ to the same Brownian motion, we have
\begin{align*}
0\leq L^\varepsilon_t - L_t 
	= \mathbb{P}(\tau^\varepsilon \leq t < \tau)
	&= \int^t_0 \mathbb{P}(t < \tau | \tau^\varepsilon = s) dL^\varepsilon_s + \nu_0(0,\varepsilon) \\
	&\leq \int^t_0 \Phi\Big( \frac{\alpha(L^\varepsilon_s - L_s) + \tfrac{1}{4}\varepsilon }{(t-s)^{1/2}} \Big) (L^\varepsilon)'_s ds + \nu_0(0,\varepsilon) \\
	&\leq c_0(\Vert L^\varepsilon - L \Vert_{L^\infty(0,t)} + \tfrac{1}{4} \varepsilon) \int^t_0 \frac{ds}{(t-s)^{1/2}s^\gamma} + \nu_0(0,\varepsilon) \\
	&\leq c_0 t^{1/2 - \gamma}(\Vert L^\varepsilon - L \Vert_{L^\infty(0,t)} + \tfrac{1}{4} \varepsilon)+ \nu_0(0,\varepsilon), 
\end{align*}
where $c_0 > 0$ is a constant independent of $t$, increasing from line to line as necessary. Therefore, taking a supremum over $t \leq t_1$ and taking $t_1$ sufficiently small, we have
\[
\Vert L^\varepsilon - L \Vert_{L^\infty(0,t_1)}
	\leq \tfrac{1}{2} \Vert L^\varepsilon - L \Vert_{L^\infty(0,t_1)} + o(1),
\]
which completes the proof. 
\end{proof}

We are now in a position to complete the proof of the main uniqueness theorem. As already indicated, the idea is to trap a general candidate solution from below by the differentiable solution from Corollary \ref{Boot_Cor_Bootstrap} and from above by the $\varepsilon$-deleted solutions, and then to shrink the resulting envelope to zero using the uniform control in Lemma \ref{Boot_Lem_Convergence}

%% PROOF: of main theorem 
\begin{proof}[Proof of Theorem \ref{Intro_Thm_MainUniqueness}]
Take the bootstrapped solution, $L$, from Corollary \ref{Boot_Cor_Bootstrap}. We will be done if we can show that there is not a solution to (\ref{eq:Intro_MVproblem}) on $[0,t_\star)$ that is distinct from $L$, for any $t_\star \leq t_{\mathrm{explode}}$. So for a contradiction, suppose that $\bar{L}$ is such a solution and let $t_{\mathrm{jump}}:=\inf\{t>0 : \Delta \bar{L}_t >0 \}$ be the first time that discontinuities of $t \mapsto \bar{L}_t$ emerge. 

By Lemma \ref{Boot_Lem_epsilonSolution} we can find $K>0$, $t_0 > 0$ and a family $L^\varepsilon$ satisfying the hypotheses of Lemma \ref{Boot_Lem_Convergence}. By decreasing $t_0$ so that $t_0 < t_{\mathrm{jump}}$, Lemma \ref{Boot_Lem_Monotonicity} and Theorem \ref{Intro_Thm_Minimality} guarantee
\[
L_t \leq \bar{L}_t < L^\varepsilon_t,
	\qquad \textrm{for every } t < t_0.
\]
By taking $t_1 \in [0, t_0)$ from the conclusion of Lemma \ref{Boot_Lem_Convergence} and sending $\varepsilon \to 0$, we conclude that $L = \bar{L}$ on $[0,t_1]$.

This argument can now be restarted from time $t_1$ and iterated as in the proof of Corollary \ref{Boot_Cor_Bootstrap}, hence we conclude that $L = \bar{L}$ up to the minimum of $t_\mathrm{jump}$ and $t_\star$. If $t_\star \leq t_\mathrm{jump}$, then we are done, so suppose $t_\mathrm{jump}<t_\star$. By left-continuity, we have that $\nu_{t_\mathrm{jump}-} = \bar{\nu}_{t_\mathrm{jump}-}$, and, since $L$ does not have a jump at time $t_\mathrm{jump}$, the physical jump condition (\ref{eq:Intro_JumpCondition}) gives
\[
\Delta\bar{L}_{t_\mathrm{jump}-} =\inf\{ x \geq 0 : \bar{\nu}_{t_\mathrm{jump}-}(0,\alpha x) < x \}  = \inf\{ x \geq 0 : \nu_{t_\mathrm{jump}-}(0,\alpha x) < x \} = 0.
\]
Now, $ \bar{\nu}_{t_\mathrm{jump}-}$ must also satisfy \eqref{eq:Intro_DensityAssumption}, so, by the right-continuity of $\bar{L}$, we can deduce that the physical jump condition gives $\Delta\bar{L}_{t} =0$ for all $t$ in a (nonempty) right neighbourhood of $t_{\mathrm{jump}}$ (for details, see \cite[Proposition 6.4.3]{sojmark_2019}). This contradicts the definition of $t_{\mathrm{jump}}$ and so it completes the proof. 
\end{proof}

%%%%%%%%%%%%%%%%%%%%%%%%%%%%%%%%%%%%%%%%%%%%%%%%%%%%%
%%%%%%%%%%%%%%%%%%%%%%%%%%%%%%%%%%%%%%%%%%%%%%%%%%%%%
%%%%%%%%%%%%     6. GENERAL COEFFICIENTS   %%%%%%%%%%
%%%%%%%%%%%%%%%%%%%%%%%%%%%%%%%%%%%%%%%%%%%%%%%%%%%%%
%%%%%%%%%%%%%%%%%%%%%%%%%%%%%%%%%%%%%%%%%%%%%%%%%%%%%
\section{Extensions to more general coefficients}
\label{Sect_GeneralCoeff}
In this section we consider some simple extensions of (\ref{eq:Intro_MVproblem}) that incorporate more general drift and diffusion coefficients. The aim is to outline how the analysis can be reduced to that of the previous sections and thus we will only provide sketches of the proofs.

To be specific, we consider the McKean\textendash Vlasov problem
\begin{equation}
\label{eq:General McKean}
\begin{cases}
dX_{t}=b(t,X_{t})dt+\sigma(t)dB_{t}-\alpha dL_{t}\\
\tau=\inf\left\{ t\geq0:X_{t}\leq0\right\} \\
L_{t}=\mathbb{P}(\tau\leq t),
\end{cases}
\end{equation}
where $B$ is a Brownian motion and the independent initial condition,
$X_{0}$, is given by $\nu_{0}$, which is assumed to satisfy (\ref{eq:Intro_DensityAssumption}) and taken to be sub-Gaussian. In terms of the coefficients, we assume that $b(t,x)$ is Lipschitz in space with $\left|b(t,x)\right|\leq C(1+\left|x\right|)$ and we impose the non-degeneracy condition $c\leq\sigma(t)\leq C$ for all $t\in[0,T]$. Furthermore, we impose an upper bound $b(t,x) \leq M$ for positive $x \geq 0$ (this is only used in Section \ref{Sec63} and it could be omitted if we could ensure $\kappa=1$ in Section \ref{Sec61}).  

The above setup includes the financial model (\ref{eq:Intro_FiniteSystem_Def}). Moreover, it e.g.~allows for a Brownian motion with an Ornstein--Uhlenbeck type drift modelling the attraction to a `resting state' --- as in the original neuroscience model \cite{dirt_annalsAP_2015}. This could also be of interest in the financial framework,  for example to include a target leverage ratio or to model some notion of `flocking to default' as in  \cite{fouque_sun_2013}.

%% THM: General coefficients unique and exist
\begin{thm}[Existence and uniqueness up to explosion]
	\label{Thm_GenUniqueness}
	Let the above assumptions be in place. Then there exists a solution to (\ref{eq:General McKean}) up to time
	\[
	t_\mathrm{explode} 
	:= \sup\{
	t > 0: \Vert L \Vert_{H^1(0,t)} < \infty
	\} \in (0,\infty]
	\]
	such that, for some $\bar{\beta}>0$, it holds for every $t_0 < t_\mathrm{explode} $ that $L \in \mathcal{S}(\tfrac{1-\bar{\beta}}{2}, K, t_0)$ for some $K > 0$. Furthermore, this solution is unique in the class of candidate solutions with $L$ in
	$\bigcup_{0<\gamma<1/2, \thinspace A >0} \mathcal{S}(\gamma, A, t_0)$.  
\end{thm}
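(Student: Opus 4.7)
My plan is to follow the four-step scheme of Theorems \ref{Intro_Thm_Minimality}, \ref{Intro_Thm_FixedPoint} and \ref{Intro_Thm_MainUniqueness}, adapting each piece by isolating two robust probabilistic inputs: the coupling of candidate solutions to a common driving noise, and the Girsanov change of measure. Uniform ellipticity makes $M_t := \int_0^t \sigma(s)\, dB_s$ a time-changed Brownian motion whose quadratic variation is comparable to $t$, so the Gaussian estimates used throughout Sections \ref{Sect_Unique}--\ref{Sect_Bootstrap} survive up to multiplicative constants depending on $c$ and $C$.

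For the analogue of Proposition \ref{Unique_Prop_Comparison}, I would couple $X^\ell$ and $X^{\bar\ell}$ to the same $(B, X_0)$. Writing $D_t := X^{\bar\ell}_t - X^\ell_t$, the Lipschitz property of $b$ and Gronwall give $|D_s + \alpha(\bar\ell_s - \ell_s)| = O(s \|\ell - \bar\ell\|_{L^\infty(0,s)})$. The exact identity $X^{\bar\ell}_s = \alpha(\ell_s - \bar\ell_s)$ on $\{\tau = s\}$ is thereby replaced by an approximate one with a controlled error, and the subsequent infimum estimate reproduces the structure of Proposition \ref{Unique_Prop_Comparison} with the Normal tail evaluated at $\alpha(\ell_s - \bar\ell_s)^+ / \bigl(\int_s^t \sigma(r)^2 dr\bigr)^{1/2}$; the linear growth of $b$ contributes only an additive term of order $t-s$ which is absorbed into the constants. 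Iterating as at the end of Section \ref{Sect_Unique} then yields both the $L^\infty$-contraction of $\Gamma$ on $\mathcal{S}(\gamma, A, t_0)$ and the minimality of differentiable solutions.

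For the stability and fixed-point construction, the representation in Lemma \ref{Fixed_Lem_StartingPoint} extends by applying It\^o's formula with the solution of the backward Kolmogorov equation $\partial_s u + b(s,x)\partial_x u + \tfrac{1}{2}\sigma(s)^2 \partial_{xx}u = 0$ on $(0,\infty)$ with Dirichlet data, whose killed fundamental solution admits Aronson-type Gaussian upper and lower bounds by uniform ellipticity and Lipschitz continuity of $b$. The Girsanov step (Lemma \ref{Fixed_Lem_Girsanov}) is modified to simultaneously remove both $\alpha \ell'_t$ and $b(t, X_t)$, so that any $\mathbb{E}[F(X_t)\mathbf{1}_{t < \tau}]$ reduces to the corresponding expectation under the law of $M_t$ killed at zero, times a finite Radon--Nikodym prefactor controlled by $\|\ell'\|_{L^2}$, by the linear growth of $b$, and by sub-Gaussianity of $\nu_0$ (Novikov is routine). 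With these substitutions, the analogues of Lemmas \ref{Fixed_Lem_BoundTheBits}--\ref{Fixed_Lem_I3} deliver a short-time fixed point in $\mathcal{S}(\tfrac{1-\bar\beta}{2}, K, t_0)$; a possible loss $\bar\beta < \beta$ arises because the H\"older exponent in the Jensen step of the generalised Girsanov lemma is $1/q$ for some $q > 1$, which can erode the boundary decay exponent by this factor, but this is harmless since the downstream arguments only use that some $\bar\beta > 0$ exists.

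The bootstrap and uniqueness parts transfer once the density-recovery result (Lemma \ref{Boot_Lem_IC_recovery}) is re-established via Hunt's switching identity for the time-reversed process $d\widehat{X}_s = -b(t-s, \widehat{X}_s)ds + \sigma(t-s)dB_s + \alpha d\widehat{L}_s$, combined with the generalised Girsanov lemma and the Aronson upper bound in place of the explicit Gaussian kernel. The $\varepsilon$-deleted solutions of Definition \ref{Boot_Def_DeletedSolution} are then constructed identically, and the strict inequality in Lemma \ref{Boot_Lem_Monotonicity} persists on a short interval because the coupled difference $X_s - X^\varepsilon_s$ is bounded below by $\tfrac{1}{4}\varepsilon$ minus a Gronwall correction of order $\varepsilon(e^{\mathrm{Lip}(b)\, s} - 1)$. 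The trapping argument of Section \ref{Subsec:mon_trap} then forces any c\`adl\`ag solution to agree with the minimal one up to $t_\mathrm{explode}$. The main obstacle I anticipate is the quantitative tracking in the analogue of Lemma \ref{Fixed_Lem_I3}, where the upper bound $b(t,x) \leq M$ for $x \geq 0$ appears to be required to prevent the drift from inflating mass towards the Dirichlet boundary faster than the kernel estimates can absorb, so that the structural exponent of the original argument is preserved.
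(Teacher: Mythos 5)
Your four-step plan mirrors the paper's, and the comparison/contractivity and bootstrap steps are in line with what the paper does (the paper's Section \ref{Sec63} also couples to the same $B$, uses Gr\"onwall on the Lipschitz drift, and the upper bound $b(t,x)\leq M$ for $x\geq 0$ precisely as you anticipate). The genuine gap is in the stability/Girsanov step. You propose to remove \emph{both} $\alpha\ell'_t$ and the state-dependent drift $b(t,X_t)$ by Girsanov, asserting that ``Novikov is routine'' and that the killed fundamental solution has ``Aronson-type'' bounds. Neither claim holds as stated. First, in Lemma \ref{Fixed_Lem_Girsanov} the decisive point is that the Radon--Nikodym factor $Z$ involves only the \emph{deterministic} function $\ell'$, so $\mathbb{E}[Z^{1-r}]$ is an explicit Gaussian integral that factors out of the H\"older inequality; once $Z$ involves the path-dependent, linear-growth $b(s,X_s)$, this clean factorisation is lost, and controlling $\mathbb{E}[Z^{1-r}]$ requires exponential moments of $\int_0^t X_s^2\,ds$, which is not routine and is not done in the paper. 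Second, a generic Aronson bound $p_t(x,y)\leq Ct^{-1/2}e^{-c(x-y)^2/t}$ for the killed kernel contains no boundary-decay factor, and it is precisely the factor $\bigl(t^{-1}xy\wedge 1\bigr)$ (or $\bigl(t^{-\kappa}x^\kappa y^\kappa\wedge 1\bigr)$) near the Dirichlet wall that produces the $t^{-(1-\beta)/2}$ rate in Lemmas \ref{Fixed_Lem_BoundTheBits}--\ref{Fixed_Lem_I3}. The paper avoids both problems by removing only $\alpha\ell'$ by Girsanov and then invoking the Dirichlet heat-kernel estimate (\ref{absorbing_density_est}) from \cite[Thm.~2.7]{hambly_sojmark_2018} for the absorbed process with drift $b$; that estimate carries an explicit boundary-decay exponent $\kappa\in(0,1]$, and this $\kappa$ --- not just the H\"older exponent $1/q$ --- is the structural source of the loss $\bar\beta=\beta\wedge\bar\kappa$. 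Your explanation of the $\bar\beta$ loss as coming solely from Jensen misses this.

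A secondary point: you sketch an extension of the monotonicity/trapping argument (Lemma \ref{Boot_Lem_Monotonicity}) to spatially dependent $b$ via a Gr\"onwall correction. Theorem \ref{Thm_GenUniqueness} only asserts uniqueness within $\bigcup_{\gamma,A}\mathcal{S}(\gamma,A,t_0)$, which follows from contraction plus minimality alone; the trapping machinery is reserved for the stronger Theorem \ref{Thm_Generic_uniqueness_b(t)}, which the paper proves only under $b(t,x)=b(t)$. The Gr\"onwall argument you outline gives an \emph{upper} bound on $|X_s-X^\varepsilon_s|$, whereas the monotonicity proof needs a persistent \emph{lower} bound on $X_s-X^\varepsilon_s$; with a general Lipschitz (non-monotone) $b$ that lower bound is not preserved by the dynamics, which is why the paper does not attempt generic c\`adl\`ag uniqueness in the spatially dependent case.
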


As for (\ref{eq:Intro_MVproblem}), the notions of weak and strong uniqueness for (\ref{eq:General McKean}), regarded as an SDE in $X$, are equivalent: taking the difference of two solutions with the same $B$ and $L$ cancels all terms apart from the drift, whereby strong uniqueness is immediate from the Lipschitz property of $x \mapsto b(t,x)$. In essence, the main point in the proof of Theorem \ref{Thm_GenUniqueness} is to have good control on the boundary decay of the relevant transition densities akin to the classical Dirichlet heat kernel estimates.

When there is no spatial dependence in the drift --- as in (\ref{eq:Intro_FiniteSystem_Def}) --- we can do better with essentially no extra work. Specifically, we can directly replicate the monotonicity and trapping arguments from Section \ref{Subsec:mon_trap}, which gives uniqueness amongst generic c\`adl\`ag solutions.
\begin{thm} [Generic uniqueness]
	\label{Thm_Generic_uniqueness_b(t)}
	Suppose the above assumptions are in place and that, in addition, $b(t,x)=b(t)$.
Then we have uniqueness on $[0,t_\mathrm{explode})$ amongst all generic c\`adl\`ag solutions to (\ref{eq:General McKean}) in the same sense as Theorem \ref{Intro_Thm_MainUniqueness}.
\end{thm}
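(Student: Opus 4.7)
The plan is to follow the trapping strategy from Section \ref{Sect_Bootstrap} step-by-step, noting that its key coupling arguments only require the drift and diffusion to be free of spatial dependence. Existence of a bootstrapped solution $L$ with $L\in\mathcal{S}(\tfrac{1-\bar{\beta}}{2},K,t_{0})$ for every $t_{0}<t_{\mathrm{explode}}$, together with its minimality amongst c\`adl\`ag solutions (the generalisation of Theorem \ref{Intro_Thm_Minimality}), is already provided by Theorem \ref{Thm_GenUniqueness}. Thus the only genuinely new work is to construct a family of $\varepsilon$-deleted solutions that converges uniformly to $L$ from above.

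For this family, define $L^{\varepsilon}$ to be the solution to the direct analogue of (\ref{eq:Boot_DeletedSol}) with the same $b(t)$ and $\sigma(t)$: kill the mass $\nu_{0}(0,\varepsilon)$ at time $0$, shift the remainder down by $\tfrac{1}{4}\varepsilon$, and solve (\ref{eq:General McKean}) from the resulting sub-probability density $V_{0}^{\varepsilon}$. As in Lemma \ref{Boot_Lem_epsilonSolution}, $V_{0}^{\varepsilon}$ vanishes on $[0,\tfrac{1}{2}\varepsilon)$ and satisfies the power-law decay condition with constants uniform in $\varepsilon<\varepsilon_{0}$, so Theorem \ref{Thm_GenUniqueness} hands us $L^{\varepsilon}\in\mathcal{S}(\tfrac{1-\bar{\beta}}{2},K,t_{0})$ with $K,t_{0}>0$ not depending on $\varepsilon$.

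The monotonicity $L_{t}^{\varepsilon}>L_{t}$ on $[0,t_{0})$ for any generic c\`adl\`ag solution $L$ goes through as in Lemma \ref{Boot_Lem_Monotonicity}: coupling both processes to the same Brownian motion and subtracting, the $b(t)\,dt$ and $\sigma(t)\,dB_{t}$ contributions cancel because they are $x$-free, leaving
\[
X_{s}-X_{s}^{\varepsilon}=X_{0}\mathbf{1}_{X_{0}<\varepsilon}+\tfrac{1}{4}\varepsilon-\alpha(L_{s}-L_{s}^{\varepsilon})\geq\tfrac{1}{4}\varepsilon
\]
at any first putative contact time. Strict positivity of the gap is then extracted exactly as in Lemma \ref{Boot_Lem_Monotonicity}: approximate $\alpha L$ in $L^{\infty}$ by an $H^{1}$ function $f$ and apply Girsanov to the martingale $\int_{0}^{\cdot}\sigma(u)\,dB_{u}$, using that its running infimum has a density (guaranteed by $c\leq\sigma\leq C$). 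For the convergence $L^{\varepsilon}\to L$ uniformly on a short interval, I would replicate Lemma \ref{Boot_Lem_Convergence} via the natural analogue of Proposition \ref{Unique_Prop_Comparison}, in which the Gaussian c.d.f.\ $\Phi$ acts on the time-changed variance $\int_{s}^{t}\sigma(u)^{2}du$ (equivalent to $t-s$ up to the constants $c,C$), yielding an integral inequality for $\|L^{\varepsilon}-L\|_{L^{\infty}(0,t_{1})}$ that collapses to zero as $\varepsilon\downarrow0$ once $t_{1}$ is chosen small.

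The remainder of the proof then runs verbatim to the end of Section \ref{Sect_Bootstrap}: trap any putative distinct c\`adl\`ag solution $\bar{L}$ by $L\leq\bar{L}<L^{\varepsilon}$ on a short interval before the first jump time of $\bar{L}$, send $\varepsilon\to0$ to force $\bar{L}=L$ there, and iterate across $[0,t_{\mathrm{explode}})$; at any candidate jump time $t_{\mathrm{jump}}$ of $\bar{L}$, left-continuity of the densities together with $L$ being continuous force $\Delta\bar{L}_{t_{\mathrm{jump}}}=0$ via the physical jump condition, ruling out the jump. The main obstacle --- and the precise reason the theorem requires $b(t,x)=b(t)$ --- is the cancellation in the coupling displayed above: any genuine $x$-dependence in $b$ would leave a residual term $\int_{0}^{s}(b(u,X_{u})-b(u,X_{u}^{\varepsilon}))\,du$ inside $X_{s}-X_{s}^{\varepsilon}$, destroying the clean lower bound $\tfrac{1}{4}\varepsilon$ and thus the monotonicity argument on which the entire trapping scheme depends.
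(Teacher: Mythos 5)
Your proposal follows the paper's route almost exactly: couple to a common Brownian motion so that the $b(t)\,dt$ and $\sigma(t)\,dB_t$ terms cancel in the difference of the two processes, build the $\varepsilon$-deleted family with constants uniform in $\varepsilon$, run the monotonicity argument via Girsanov with respect to the time-changed Brownian motion $u\mapsto\int_0^u\sigma(r)\,dB_r$, prove convergence $L^\varepsilon\to L$ from the comparison estimate with the time-changed kernel, and then trap and iterate as in Section \ref{Sect_Bootstrap}. This is the paper's argument in Section \ref{Sec64}.

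One correction is needed, however. You assert that minimality of $L$ amongst all c\`adl\`ag solutions ``is already provided by Theorem \ref{Thm_GenUniqueness}.'' It is not: Theorem \ref{Thm_GenUniqueness} only gives uniqueness within the restricted class $\bigcup_{\gamma,A}\mathcal{S}(\gamma,A,t_0)$, and its proof (Section \ref{Sec63}) controls $X^{\bar\ell}_s$ on $\{\tau_\ell=s\}$ via Gr\"onwall by $\alpha C_0\Vert\ell-\bar\ell\Vert_{L^\infty(0,s)}$ --- a two-sided sup-norm bound that yields contractivity but cannot yield the one-sided estimate on $(L_t-\bar L_t)^+$ needed for minimality against a generic c\`adl\`ag $\bar L$. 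Minimality must therefore be re-proved as part of the present theorem, and this is in fact the second place (alongside the monotonicity lemma, which you do identify) where $b(t,x)=b(t)$ is essential: without spatial dependence the Gr\"onwall step disappears and one gets $X^{\bar L}_s=X^{\bar L}_s-X^{L}_s\le\alpha(L_s-\bar L_s)^+$ directly on $\{\tau_L=s\}$, whence $(L_t-\bar L_t)^+\le C'\Vert(L-\bar L)^+\Vert_{L^\infty(0,t)}\int_0^t(t-s)^{-1/2}\,dL_s$ and the iteration from the second half of Theorem \ref{Intro_Thm_Minimality} applies. Since this is exactly the cancellation you already display in the monotonicity step, the fix is a relabelling rather than a new idea, but as written the lower half of your trapping envelope is unsupported.
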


The remainder of this section concerns the proofs of Theorems \ref{Thm_GenUniqueness} and \ref{Thm_Generic_uniqueness_b(t)}, however, we will  only show how to reduce the analysis to that of Sections \ref{Sect_Unique}, \ref{Sect_FixedPoint} and \ref{Sect_Bootstrap}.
The proof of Theorem \ref{Thm_GenUniqueness} is split into three succinct parts: first we discuss the stability of the fixed point map (Section \ref{Sec61}), then we revisit the bootstrapping of short-time existence (Section \ref{Sec62}), and finally we extend the contractivity arguments (Section \ref{Sec63}).
Last, the proof of Theorem \ref{Thm_Generic_uniqueness_b(t)} is  outlined in Section \ref{Sec64}.

\subsection{Stability of the fixed point map}
\label{Sec61}
Let  $\Gamma$ denote the fixed point map for (\ref{Thm_GenUniqueness}) defined by analogy with (\ref{eq:Intro_DefnOfGamma}). Then we need to ensure that Proposition \ref{Fixed_Prop_Stability}, concerning the stability of $\Gamma$, also holds true in this more general setting.

To this end, the starting point is still the expression
for $\Gamma[\ell]$ from Lemma \ref{Fixed_Lem_StartingPoint}, which remains valid with $X$ given by (\ref{eq:General McKean}). Next, we can observe that the Girsanov argument from Lemma \ref{Fixed_Lem_Girsanov} will now take the form
\[
\mathbb{E}[F(X_t)\mathbf{1}_{t < \tau}] 
\leq \exp\Big\{ \frac{\alpha^2 \Vert \ell' \Vert^2_{L^2(0,t)} }{2 c^{2}(q-1)^2}   \Big\} \mathbb{E}[F(\widetilde{X}_t)^q\mathbf{1}_{t < \widetilde{\tau}}]^{\frac{1}{q}},  
\]
with
\[
d\widetilde{X}_{t}=b(t,\widetilde{X}_{t})dt+\sigma(t)dW_{t}\quad\text{and}\quad\widetilde{\tau}=\inf\bigl\{ t>0:\widetilde{X}_{t}\leq0\bigr\},
\]
where $W$ is a standard Brownian motion and $\widetilde{X}_{0}$ is an independent random variable distributed according to $\nu_0$. Indeed, we can simply consider the Radon--Nikodym derivative
\[
\frac{d\mathbb{Q}}{d\mathbb{P}}\Big|_{\mathcal{F}_t}= \exp\Big\{ 
\alpha \int^t_0 \frac{\ell_s'}{\sigma(s)} dB_s - \frac{\alpha^2}{2}  \int^t_0 \frac{(\ell_s')^2}{\sigma(s)^2} ds
\Big\}
\]
and otherwise repeat the arguments from the proof of Lemma \ref{Fixed_Lem_Girsanov}. From here, we can apply \cite[Thm.~2.7]{hambly_sojmark_2018} to see that the absorbed process $\widetilde{X}$  has a transition density $\widetilde{V}_{t}(x,x_0)$ which satisfies the Dirichlet heat kernel type estimate
\begin{equation}
\label{absorbing_density_est}
\widetilde{V}_{t}(x,x_0)\leq C\Bigl( \bigl(t^{-1}xx_0\land1\bigr)t^{-\frac{1}{2}}+\bigl(t^{-\kappa}x^{\kappa}x_0^{\kappa}\land1\bigl)e^{\delta x_{0}^{2}}\Bigr)e^{-c(x-x_{0})^{2}/t},
\end{equation}
for some constants $C>0$ and $\kappa\in(0,1]$ only depending on $\delta>0$, where the latter can be taken sufficiently small such that $\int_{0}^{\infty}e^{\delta x^2}\nu_0(dx)<\infty$.
\begin{rem}
	The estimate (\ref{absorbing_density_est}) is derived in \cite[Sec.~6]{hambly_sojmark_2018} by first showing that
	\[
	\widetilde{V}_{t}(x,x_0)=G_{0,t}^{\sigma}(x,x_{0})+\int_{0}^{t}\mathbb{E}_{\widetilde{X}_{0}=x_0} [b(s,\widetilde{X}_{s})\partial_{y}G_{s,t}^{\sigma}(x,\widetilde{X}_{s})\mathbf{1}_{s<\widetilde{\tau}}]ds,
	\]
	where $G_{\cdot,t}^{\sigma}$ is the Green's function for $\partial_{s}u(s,x)=-\frac{1}{2}\sigma(s)^{2}\partial_{xx}u(s,x)$ as a terminal-boundary value problem on $[0,t)\times\mathbb{R}_+$ with $u(\cdot, 0) =0$. This follows from ideas closely related to those in Lemma \ref{Fixed_Lem_StartingPoint} and the estimate then comes from a careful change of measure in the second term to remove the drift of $\widetilde{X}$.
\end{rem}

With a view towards Section \ref{Sec62}, fix any $\bar{\kappa} < \kappa$ and set $\bar{\beta} := \beta\land\bar{\kappa}$, where $\beta$ is the decay exponent of the initial condition. Then the power law decay of $\widetilde{V}_t(x,x_0)$ near the boundary, cf.~(\ref{absorbing_density_est}), implies that we can replicate the arguments from Section \ref{Sect_FixedPoint} with $\widetilde{V}_{t}$ in place of the Dirichlet heat kernel, $G_t$. In this way Proposition~\ref{Fixed_Prop_Stability} remains true only with $\bar{\beta}$ in place of $\beta$.

\subsection{Density bounds for bootstrapping}
\label{Sec62}

Let $\nu_t$ denote the law of $X_t$ killed at the origin. In order to allow bootstrapping of our short-time results, we need to establish analogoues of Lemma \ref{eq:Boot_IC_recovery_I} and Corollary \ref{Boot_Cor_ExtendSolutionOneStep}.

First, referring to (\ref{absorbing_density_est}) as above, we can argue by analogy with Proposition \ref{MinimalJumps_Prop_DensityExists} to see that $\nu_t$ has a density, $V_t$. Moreover, these same arguments show that $V_t$ is bounded with $\left\Vert V_{t}\right\Vert _{\infty}\leq\left\Vert V_{0}\right\Vert _{\infty}+C\int_{0}^{\infty}e^{\delta x^2}\nu_0(dx)$.

Now suppose $L \in H^{1}(0,t_0)$, for some $t_0>0$, and fix $t<t_0$. As in Section \ref{Sect_Bootstrap}, we can then define, for all $s \leq t$, the dual process
\[
d\widehat{X}_{s} = -b(t-s,X_{t-s})ds+\sigma(t-s)dB_{s}+\alpha d\widehat{L}_{s},
\]
where $\widehat{L}_{s}=L_{t-s}$ and $B$ is a Brownian motion. As in the proof of Lemma \ref{eq:Boot_IC_recovery_I},
it follows from Hunt's switching identity that
\[
V_{t}(x)=\mathbb{E}_{\widehat{X}_{0}=x} [V_{0}(\widehat{X}_{t})\mathbf{1}_{t<\widehat{\tau}} ],\quad\widehat{\tau}=\inf\bigl\{ s\geq0:\widehat{X}_{s}\leq0\bigr\}.
\]
Thus, by the same Girsanov argument as in the extension of Lemma \ref{Fixed_Lem_Girsanov} above, it holds for any $q>1$ that
\begin{equation}
\label{Vt_Sec6.2}
V_{t}(x)\leq C_{q}\mathbb{E}_{Y_0=x}\left[V_{0}(Y_{t})^{q}\mathbf{1}_{t<\tau_Y}\right]^{\frac{1}{q}},
\end{equation}
where
\[
dY_{s} = -b(t-s,X_{t-s})ds+\sigma(t-s)dB_{s},
\]
and $\tau_Y$ is the first hitting time of zero by $Y$.  As for $X$, the absorbed process $Y$ has a transition density $U_t(y,y_0)$, so we get
\begin{equation}
\label{Hunt_Sec6.2}
\mathbb{E}_{Y_0=x}\left[V_{0}(Y_{t})^{q}\mathbf{1}_{t<\tau_Y}\right] = \int_{0}^{\infty}V_{0}(y)^{q}U_{t}(y,x)dy.
\end{equation}
Appealing again to \cite{hambly_sojmark_2018}, we see that $U$ satisfies precisely the same bound as (\ref{absorbing_density_est}). Now observe that $(t^{-1}xy\land 1)\leq t^{-\beta q}x^{\beta q}y^{\beta q}$ for $\beta q\leq1$ and recall that $V_0(y)\leq Cy^\beta$. Using this and the order $x^{\kappa}$ boundary decay of the second term in (\ref{absorbing_density_est}), as applied to $ U_t(y,x)$, it follows easily from (\ref{Vt_Sec6.2}) and (\ref{Hunt_Sec6.2}) that
\[
V_{t}(x)\leq C'_q \bigl( x^{\beta} + x^{\kappa/q} \bigr), \quad \text{for all}\;t<t_0,
\]
for any $q>1$ such that  $\beta q\leq1$. Taking $q$ close enough to $1$ so that $\kappa / q \leq \bar{\beta}$ (with $\bar{\beta}$ fixed in Section \ref{Sec61}) we can conclude that $V_t$ satisfies condition (\ref{eq:Intro_DensityAssumption}) for every $t \in [0,t_0)$ with $\bar{\beta}$ as the decay exponent. Given this, we immediately obtain the desired analogues of Lemma \ref{eq:Boot_IC_recovery_I} and Corollary \ref{Boot_Cor_ExtendSolutionOneStep}.

\subsection{Contractivity of the fixed point map}
\label{Sec63}

In this subsection we finally show that $\Gamma$ remains an $L^{\infty}$ contraction, by extending the first part of Theorem 1.6, and then we complete the proof of Theorem \ref{Thm_GenUniqueness}.

With $\ell, \bar{\ell} \in \mathcal{S}(\gamma, A,t_0)$ as in the statement of Theorem \ref{Intro_Thm_Minimality}, consider for $t \in [0,t_0)$ the corresponding processes $X^{\ell}$ and $X^{\bar{\ell}}$ given by
\[
dX_{t}^{\ell}=b(t,X_{t}^{\ell})dt+\sigma(t)dB_{t}-\alpha d\ell_t, \quad dX_{t}^{\bar{\ell}}=b(t,X_{t}^{\bar{\ell}})dt+\sigma(t)dB_{t}-\alpha d\bar{\ell}_t,
\]
coupled through the same Brownian motion and initial condition. Let $\tau_{\ell}$ and $\tau_{\bar{\ell}}$ denote the respective hitting times of zero.

Since $\bar{\ell}$ is increasing, and recalling also the upper bound on $b$ for $x\geq0$, it holds for all $s \leq u < \tau_{\bar{\ell}}$ that
\begin{equation}
\label{eq:comaprison_1}
X_{u}^{\bar{\ell}}-X_{s}^{\bar{\ell}} 
\leq \bar{C}\int_{s}^{u}\sigma(r)^{2}dr+\int_{s}^{u}\sigma(r)dB_{r} =: \bar{X}_{s,u}.
\end{equation}
Moreover, by the Lipschitzness of the drift we have
\begin{align}
|X_{t}^{\bar{\ell}} - X_{t}^{\ell}| \leq C \int_{0}^{t}|X_{r}^{\bar{\ell}}-X_{r}^{\ell}|dr+\alpha \Vert \ell -\bar{\ell} \Vert_{L^\infty(0,t)}, \nonumber
\end{align}
so Gr\"onwall's lemma yields
\[
|X_{t}^{\bar{\ell}} - X_{t}^{\ell}| \leq \alpha e^{Ct} \Vert \ell -\bar{\ell} \Vert_{L^\infty(0,t)}.
\]
Therefore, on the event $\left\{ \tau_{\ell}=s \right\}$ it holds for all $s<t_0$ that
\begin{equation}
\label{eq:comaprison_2}
X_{s}^{\bar{\ell}}=X_{s}^{\bar{\ell}}-X_{s}^{\ell}\leq \alpha C_{0}\Vert \ell -\bar{\ell} \Vert_{L^\infty(0,s)}.
\end{equation}

Using (\ref{eq:comaprison_1}) and (\ref{eq:comaprison_2}), and arguing as in (\ref{eq:Unique_MainMinimalArgument}), we deduce that
\begin{align}
\Gamma[\ell]_t - \Gamma[\bar{\ell}]_t 
&\leq \int_{0}^{t}\mathbb{P}\bigl(\inf_{u\in[s,t]}\bar{X}_{s,u}+ \alpha C_{0} \Vert \ell -\bar{\ell} \Vert_{L^\infty(0,s)} >0\bigr)d\Gamma[\ell]_s \nonumber \\
&= \int_{0}^{t} \int_{0}^{\infty}\bar{p}_{t-s}\bigl(x,\alpha C_{0}\Vert \ell -\bar{\ell} \Vert_{L^\infty(0,s)} \bigr)dx d\Gamma[\ell]_s,
\nonumber
\end{align}
for $t < t_0$, where a simple time-change shows that $\bar{p}_{r}(x,x_0)$ is given explicitly by
\[
\bar{p}_{r}(x,x_0)=\frac{1}{\sqrt{2\pi \varsigma(r)}}\Bigl(1-\exp\Bigl\{-\frac{2xx_{0}}{\varsigma(r)}\Bigr\}\Bigr)\exp\Bigl\{-\frac{(x-x_{0}-\bar{C}\varsigma(r))^{2}}{2\varsigma(r)}\Bigr\}
\]
with $\varsigma(r)=\int_{s}^{r+s}\sigma(u)^{2}du$. Noting that $c (t-s) \leq \varsigma(t-s) \leq C (t-s)$ and using the bound $1-e^{-z} \leq z\wedge1$, it is straightforward to verify that
\[
\int_{0}^{\infty}\bar{p}_{t-s}(x,\alpha C_{0}y)dx \leq C'_0 (t-s)^{-\frac{1}{2}} y, \qquad \text{for} \;\; y\geq0.
\]
Thus, it holds for all $t < t_0$ that
\[
\Gamma[\ell]_t - \Gamma[\bar{\ell}]_t 
\leq C'_0 \Vert \ell -\bar{\ell} \Vert_{L^\infty(0,t)} \int_{0}^{t}(t-s)^{-\frac{1}{2}}d\Gamma[\ell]_{s}. 
\]

From here we can repeat the proof of Theorem \ref{Intro_Thm_Minimality} to get contractivity of $\Gamma$ and by Section \ref{Sec61} we have stability. Thus, the short-time version of Theorem \ref{Thm_GenUniqueness} holds by the same arguments as in the proof of Theorem \ref{Intro_Thm_FixedPoint}. Finally, Section \ref{Sec62} allows us to bootstrap this result up to the $H^1$ explosion time and so the full Theorem \ref{Thm_GenUniqueness} follows.

\subsection{Minimality, monotonicity and trapping}
\label{Sec64}
In this final subsection we consider the special case $b(t,x)=b(t)$ and sketch the remaining steps towards the proof of Theorem \ref{Thm_Generic_uniqueness_b(t)}. Proceeding as in Section \ref{Sec63}, the Gr\"onwall argument is now no longer needed and instead we immediately get $X_{s}^{\bar{L}}=X_{s}^{\bar{L}}-X_{s}^{L}\leq \alpha (L_s -\bar{L}_s)^+$ on $\left\{ \tau_{L}=s \right\}$. Therefore, the ensuing arguments yield
\[
(L_t - \bar{L}_t)^+ \leq  C' \Vert (L -\bar{L})^+ \Vert_{L^\infty(0,t)}  \int^t_0 (t-s)^{-\frac{1}{2}} dL_s
\]
and hence the exact same reasoning as in the second part of Theorem \ref{Intro_Thm_Minimality} ensures that any differentiable solution $L \in \mathcal{S}(\gamma,A,t_0)$ is minimal. In terms of the monotonicity and trapping procedure, the only change in Lemma \ref{Boot_Lem_Monotonicity} is that equation (\ref{eq:Boot_ComparisonII}) now reads as
\begin{equation}
L^\varepsilon_t \geq L_t + \mathbb{P}(\inf_{0\leq u \leq t} \{ X_0 + {\textstyle \int_0^u} b(r) dr + {\textstyle \int_0^u} \sigma(r) dB_r - f_u \} \in (\tfrac{1}{16}\varepsilon, \tfrac{3}{16} \varepsilon)).
\end{equation}
Consequently, the proof of Lemma \ref{Boot_Lem_Monotonicity}  goes through by absolute continuity with respect to the time-changed Brownian motion $u\mapsto\int_0^u \sigma(r) dB_r$. Furthermore, the proof of Lemma \ref{Boot_Lem_Convergence} follows immediately by applying the same reasoning as in Section \ref{Sec63}. Given this, the proof of Theorem \ref{Thm_Generic_uniqueness_b(t)} can be finished in precisely the same way as the second part of Theorem \ref{Intro_Thm_MainUniqueness}.

%%%%%%%%%%%%%%%%%%%%%%%%%%%%%%%%%%%%%%%%%%%%%%%%%%%%%
%%%%%%%%%%%%           ACKNOWLEDGEMENTS          %%%%%%%%%%
%%%%%%%%%%%%%%%%%%%%%%%%%%%%%%%%%%%%%%%%%%%%%%%%%%%%%

\paragraph*{Acknowledgements.}
The research of A. S{\o}jmark was partially supported by the EPSRC award EP/L015811/1. We are grateful for the careful reading and suggested improvements from an anonymous referee.

%%%%%%%%%%%%%%%%%%%%%%%%%%%%%%%%%%%%%%%%%%%%%%%%%%%%%
%%%%%%%%%%%%%%%%%%%%%%%%%%%%%%%%%%%%%%%%%%%%%%%%%%%%%
%%%%%%%%%%%%           REFERENCES          %%%%%%%%%%
%%%%%%%%%%%%%%%%%%%%%%%%%%%%%%%%%%%%%%%%%%%%%%%%%%%%%
%%%%%%%%%%%%%%%%%%%%%%%%%%%%%%%%%%%%%%%%%%%%%%%%%%%%%
%% the bibliography

\bibliographystyle{alpha}

\end{document}